\newcommand{\Sec}[1]{\hyperref[sec:#1]{\S\ref*{sec:#1}}} 
\newcommand{\Eqn}[1]{\hyperref[eq:#1]{(\ref*{eq:#1})}} 
\newcommand{\Fig}[1]{\hyperref[fig:#1]{Figure~\ref*{fig:#1}}} 
\newcommand{\Tab}[1]{\hyperref[tab:#1]{Table~\ref*{tab:#1}}} 
\newcommand{\Thm}[1]{\hyperref[thm:#1]{Theorem~\ref*{thm:#1}}} 
\newcommand{\Proposition}[1]{\hyperref[thm:#1]{Proposition~\ref*{prop:#1}}} 
\newcommand{\Lem}[1]{\hyperref[lem:#1]{Lemma~\ref*{lem:#1}}} 
\newcommand{\Prop}[1]{\hyperref[prop:#1]{Property~\ref*{prop:#1}}} 
\newcommand{\Cor}[1]{\hyperref[cor:#1]{Corollary~\ref*{cor:#1}}} 
\newcommand{\Def}[1]{\hyperref[def:#1]{Definition~\ref*{def:#1}}} 
\newcommand{\Ex}[1]{\hyperref[ex:#1]{Example~\ref*{ex:#1}}} 
\newtheorem{remark}{Remark}
\newcommand{\vvec}{ {\mbox{\tt vec}}}
\newcommand{\reshape}{{\mbox{\tt reshape}}}
\newcommand{\squeeze}{{\mbox{\tt squeeze}}}
\newcommand{\diag}{{\mbox{\tt diag}}}
\newcommand{\bmp}{{\mbox{\tt bmp}}}
\newcommand{\bmpF}{{\mbox{\tt bmp4}}}
\newcommand{\perm}{{\mbox{\tt permute}}}
\newcommand{\mat}{{\mbox{\tt Mat}}}
\newcommand{\tvec}{{\mbox{\tt Tvec}}}
\newcommand{\tfold}{{\mbox{\tt Tfold}}}
\newcommand{\ones}{{\mbox{\tt ones}}}
\newcommand{\V}[1]{{\bm{\mathbf{\MakeLowercase{#1}}}}} 
\newcommand{\M}[1]{{\bm{\mathbf{\MakeUppercase{#1}}}}} 
\newcommand{\T}[1]{\boldsymbol{\mathscr{\MakeUppercase{#1}}}} 
\newcommand{\TA}{\T{A}}
\newcommand{\TB}{\T{B}}
\newcommand{\TC}{\T{C}}
\newcommand{\TX}{\T{X}}
\newcommand{\Vx}{\V{x}}
\newcommand{\Vu}{\V{u}}
\newcommand{\Va}{\V{a}}
\newcommand{\Vb}{\V{b}}
\newcommand{\Vc}{\V{c}}
\newcommand{\Vv}{\V{v}}
\newcommand{\MA}{\M{A}}
\newcommand{\MC}{\M{C}}
\newcommand{\MX}{\M{X}}
\newcommand{\MD}{\M{D}}
\newcommand{\MB}{\M{B}}
\newcommand{\MW}{\M{W}}
 \newcommand{\bea}{ \left[ \begin{matrix} }
 \newcommand{\eea}{ \end{matrix} \right] }
\definecolor{blue}{rgb}{0,0,1}
\definecolor{red}{rgb}{1,0,0}
\definecolor{green}{rgb}{.5,.8,.5}
\definecolor{mayablue}{rgb}{0.45, 0.76, 0.98}
\definecolor{ceil}{rgb}{0.57, 0.63, 0.81}
\definecolor{tblue}{rgb}{0.19, 0.45, 0.7}
\definecolor{brightlavender}{rgb}{0.75, 0.58, 0.89}
\definecolor{asparagus}{rgb}{0.53, 0.66, 0.42}
\definecolor{lightseagreen}{rgb}{0.13, 0.7, 0.67}
\definecolor{rosevale}{rgb}{0.67, 0.31, 0.32}
\definecolor{turquoisegreen}{rgb}{0.63, 0.84, 0.71}
\definecolor{mldgreen}{RGB}{189, 209, 187}
\definecolor{mldpink}{RGB}{253, 171, 191}
\definecolor{mlddarkgreen}{RGB}{154, 183, 185}
\definecolor{carminepink}{rgb}{0.92, 0.3, 0.26}
\newcommand{\tensor}[1][]{\@tensor[#1]}
\def\@tensor[#1] (#2,#3) #4; {{ 

\pgfkeys{/tensor/.cd,#1}

\def\depthScale{0.5} 

\pgfmathsetmacro{\numSlicesMinusOne}{\nSlices-1}
\pgfmathsetmacro{\numSlicesPlusOne}{\nSlices+1}


\pgfmathsetmacro{\sliceLength}{\myScale*\dimOne}

\ifthenelse{\equal{\sliceType}{lateral}}
	{
	
	\pgfmathsetmacro{\sliceWidth}{\myScale*\sWidth*0.9*\dimTwo/\nSlices}
	\pgfmathsetmacro{\sliceGap}{\myScale*\dimTwo/(\nSlices-1) - \nSlices*\sliceWidth/(\nSlices-1)}
	\pgfmathsetmacro{\sliceDepth}{\myScale*\dimThree}
	
	} 
	{
	\ifthenelse{\equal{\sliceType}{frontal}}
		{
		
		\pgfmathsetmacro{\sliceDepth}{\myScale*\sWidth*0.9*\dimThree/\nSlices}
		\pgfmathsetmacro{\sliceGap}{\myScale*\dimThree/(\nSlices-1) - \nSlices*\sliceDepth/(\nSlices-1)}
		\pgfmathsetmacro{\sliceWidth}{\myScale*\dimTwo}
	
		}
		{
		\pgfmathsetmacro{\sliceWidth}{\myScale*\dimTwo}
		\pgfmathsetmacro{\sliceDepth}{\myScale*\dimThree}
		}

	}

\def\xFront{#2 + \xShift}	
\def\yFront{#3 + \yShift}
\def\xBack{#2 + \xShift + \depthScale*\sliceDepth + \xSpec*\sliceDepth}
\def\yBack{#3 + \yShift + \depthScale*\sliceDepth + \ySpec*\sliceDepth}



\def\aFront{(\xFront, \yFront)}
\def\bFront{(\xFront, \yFront + \sliceLength)}
\def\cFront{(\xFront + \sliceWidth, \yFront + \sliceLength)}
\def\dFront{(\xFront + \sliceWidth, \yFront)}

\def\aBack{(\xBack, \yBack)}
\def\bBack{(\xBack, \yBack + \sliceLength)}
\def\cBack{(\xBack + \sliceWidth, \yBack + \sliceLength)}
\def\dBack{(\xBack+ \sliceWidth, \yBack)}

\ifthenelse{\NOT\equal{\myFill}{nofill}}
	{
	\def\tempTensor{
		\fill[\myFill!25] \bFront -- \bBack -- \cBack -- \cFront -- cycle; 
		\fill[\myFill!75] \dFront -- \dBack -- \cBack -- \cFront -- cycle; 
		\fill[\myFill!50] \aFront rectangle \cFront;  
	
		\draw \aFront rectangle \cFront; 
		\draw \bFront -- \bBack; 
		\draw \cFront -- \cBack;
		\draw \dFront -- \dBack;
	
		\draw \bBack -- \cBack;
		\draw \cBack -- \dBack;
		}
	}
	{ 

	\def\tempTensor{
		\draw \aFront rectangle \cFront; 
		
		\ifthenelse{\NOT\equal{\myBack}{0}}
		{
			\draw[dashed] \bBack -- \aBack -- \dBack;
		}{}
		
		\draw \dBack -- \cBack -- \bBack;

		\ifthenelse{\NOT\equal{\myBack}{0}}
		{
			\draw[dashed] \aFront -- \aBack;
		}{}
		
		\draw \bFront -- \bBack;
		\draw \cFront -- \cBack;
		\draw \dFront -- \dBack;
		}
	}


\ifthenelse{\equal{\sliceType}{lateral}}
	{
	\foreach\sliceCount in {0,...,\numSlicesMinusOne}
		{	
		\begin{scope}[shift ={(\sliceCount*\sliceWidth + \sliceCount*\sliceGap, 0)}]
			\tempTensor;
		\end{scope}
		}
	
	}
	{
	
	\ifthenelse{\equal{\sliceType}{frontal}}
	{
	
	\pgfmathsetmacro{\xStep}{\sliceDepth/2 + \sliceGap/2 + \myScale*\dimThree*\xSpec/(\nSlices-(1-\sWidth))}
	\pgfmathsetmacro{\yStep}{\sliceDepth/2 + \sliceGap/2 +  \myScale*\dimThree*\ySpec/(\nSlices-(1-\sWidth))}
	
	\foreach\sliceCount in {-\numSlicesMinusOne,...,0}
		{	
		
		\begin{scope}[shift = {(-\sliceCount*\xStep, -\sliceCount*\yStep)}]
			\tempTensor;
		\end{scope}
	
		}
	
	}
	{
	\tempTensor;
	}
	
	}


\node at (#2 + \dimTwo/2, #3 + \dimOne/2) {#4};

}} 
\title{Tensor BM-Decomposition for Compression and Analysis of Video Data\footnotemark[1]}
\author{ 
Fan Tian\footnotemark[2], 
Misha E. Kilmer\footnotemark[2]\ , 
Eric Miller\footnotemark[3]\hspace{.07in}\footnotemark[6]\ , and
Abani Patra\footnotemark[1]\hspace{.07in}\footnotemark[4]\hspace{.05in}\footnotemark[5]
}
\begin{document}
\maketitle
\renewcommand{\thefootnote}{\fnsymbol{footnote}}
\footnotetext[1]{This material is based upon work supported by the National Science
Foundation under NSF HDR grant CCF-1934553. Misha E. Kilmer is also supported in part by NSF DMS-1821148.  Eric L. Miller is also supported in part by NSF grant 1935555. Abani K. Patra is also supported in part by 2004302.}
\footnotetext[2]{Department of Mathematics, Tufts University, Medford, MA 02115.}
\footnotetext[3]{Electrical and Computer Engineering, Tufts University, Medford, MA 02115.}
\footnotetext[4]{Computer Science Department, Tufts University, Medford, MA 02115.}
\footnotetext[5]{Data Intensive Studies Center, Tufts University, Medford, MA 02115.}
\footnotetext[6]{Tufts Institute for Artificial Intelligence, Tufts University, Medford, MA 02115.}\renewcommand{\thefootnote}{\arabic{footnote}}


\begin{abstract}
Given tensors $\TA, \TB, \TC$ of size $m \times 1 \times n$, $m \times p \times 1$, and $1\times p \times n$, respectively, their Bhattacharya-Mesner (BM) product will result in a third-order tensor of dimension $m \times p \times n$ and BM-rank of 1 (Mesner and Bhattacharya, 1990).
Thus, if an arbitrary $m \times p \times n$ third-order tensor can be written as a sum of a small number, relative to $m,p,n$, of such BM-rank 1 terms, this BM-decomposition (BMD) offers an implicitly compressed representation of the tensor. 
In this paper, we first show that
grayscale surveillance video can be accurately captured by a low BM-rank decomposition and give methods for efficiently computing this decomposition. To this end, we first give results
that connect rank-revealing matrix factorizations to the BMD. Next, we present a generative model that illustrates that spatio-temporal video data can be expected to have low BM-rank.  We combine these observations to derive a regularized alternating least squares (ALS) algorithm to compute an approximate BMD of the video tensor.  The algorithm itself is highly parallelizable since the bulk of the computations break down into relatively small regularized least squares problems that can be solved independently.  We study the ability of our algorithm to separate the video data into stationary and non-stationary components while simultaneously compressing the data.  We then introduce a new type of BM-product suitable for color video and provide an algorithm that shows an impressive ability to extract important temporal information from color video while simultaneously compressing the data.  Extensive numerical results compared against the state-of-the-art matrix-based DMD for surveillance video separation show our algorithms can consistently produce results with superior compression properties while simultaneously providing better separation of stationary and non-stationary features in the data.

\end{abstract}

\section{Introduction}
Low rank tensor decomposition methods have provided domain-specific insight into large, multidimensional data sets as well as a means of compressing these data \cite{kolda2009tensor}. Many such methods have been proposed including the CANDECOMP/PARAFAC or canonical polyadic (CP) decomposition \cite{kiers2000towards, mocks1988topographic}, the Tucker model or the higher-order SVD (HOSVD) method \cite{tucker1966some, de2000multilinear}, tensor-train decomposition \cite{oseledets2011tensor}, t-SVD \cite{kilmer2011factorization} and its more general form $\star_{M}$ tensor SVD (t-SVDM) \cite{kilmer2021tensor}.  The use of a specific method on a particular problem depends heavily on the underlying application (i.e., the properties of the data) as well as the processing objectives (compression, information extraction, etc.).  Of interest here 
{are the compression and the decomposition of video into stationary background and moving foreground components.} In \cite{karim2020accurate}, regularized CP decomposition was used for the video background estimation. 

Our approach builds on recent interest in the tensor Bhattacharya-Mesner (BM) product \cite{mesner1990association,mesner1994ternary} and associated BM-algebra \cite{gnang2011spectral,gnang2017spectra,gnang2020bhattacharya}. The fundamental difference between factoring a tensor using the BM-product representation and the tensor CP-decomposition lies in the order of the set of ``factor tensors" into which a given third-order tensor (the case of interest here) is decomposed. The CP approach is based on outer products generated from triplets of vectors while a BM-decomposition (BMD) employs triplets of matrices, as we show in Sec.~\ref{sec:background}.  Theoretical studies of third-order tensor spectral decomposition and singular value decomposition in terms of BM-product have been discussed in \cite{gnang2017spectra} and \cite{gnang2020symmetrization} respectively. However, no numerical algorithms on tensor BM-decomposition methods have been proposed until recently.  In 2022, we first presented an alternating least squares (ALS) algorithm to factor a third-order tensor into an unconstrained BMD \cite{siam2022talk}, which served as motivation for the present work.  Independently, in 2023, Luo et. al \cite{luo2023multidimensional} proposed an unconstrained tensor factorization framework based on the third-order tensor BMP, which they rename as matrix outer-product (MOP).  They, too, propose an ALS algorithm for an application in Bayesian inference, but do not address issues including starting guess or convergence.  

In this paper, we offer the following contributions:
\begin{itemize}
\item Theorems providing interpretations of rank-revealing matrix-factorizations, both on the unfolded data and on frontal faces of the data, in terms of the BMD that we will utilize in establishing starting guesses for our iterative algorithms;
\item Corollaries to the above that bound BM rank;
\item  A generative low BM-rank model for surveillance-type videos with stationary background and simple moving foreground illustrates the compressive power of the BMD for video separation and compression;
\item Discussion of implementation details for an unregularized ALS algorithm for the BMD;
\item New regularized ALS algorithm designed to overcome non-uniqueness issues and theoretical convergence issues associated with unregularized ALS and that also serves to enforce the separation of video into stationary and non-stationary components; 
\item Convergence and computational complexity analysis of the regularized algorithm;
\item A new, BM product-like definition suitable for generalizing the BMD for factoring color (a.k.a. fourth order) video;
\item A regularized ALS fourth-order BMD algorithm for color video compression/separation;  
\item Extensive numerical results and comparison with the state-of-the-art Dynamic Mode Decomposition (DMD) \cite{kutz2016dynamic} which is a matrix-based surveillance video separation technique, and a tensor-based method namely the spatiotemporal slice-based SVD (SS-SVD) \cite{kajo2018svd} method, in terms of separation quality and compression quality.   
\end{itemize}

The remainder of this paper is organized as follows. Background notation, tensor definitions, and notions of the BM-algebra are provided in Section~\ref{sec:background}. In Section~\ref{sec:BMDconnection}, we show how low-rank matrix factorizations, either applied to the unfolded tensor or to the frontal slices of the third-order tensor, can be reinterpreted within the BMD framework and can be used to give upper bounds on the BM-rank. In Section~\ref{sec:vid_application}, we give a brief overview of the task of video background and foreground separation, and discuss two popular methods including the DMD method and SS-SVD method, which can be used as starting guesses to the BMD.
Section~\ref{sec:generative} discusses the generative spatiotemporal video model, which illustrates the inherently low BM-rank property of (grayscale) video tensors. 
In Section~\ref{sec:lowBMrank}, we discuss the unconstrained low BM-rank approximation problem and an ALS algorithm for computing the tensor BMD. 
Section~\ref{sec:regALS} discusses the updates to the unconstrained algorithm to solve a regularized problem where the regularization is tailored to encouraging the desired separation into stationary and non-stationary components. 
Section~\ref{sec:numerical} contains an extensive numerical study of the third-order BMD method 
for the application of compressible video background/foreground reconstructions.
Finally, we describe a new BM-product-like 
method for fourth-order tensors and its application in decomposing color videos in Section~\ref{sec:colorBMD}.  

\section{Background and Notation}
\label{sec:background}
As shown in Fig.~(\ref{fig:tensorslice}), given $\TX \in \mathbb{R}^{m \times p \times n}$ a third-order tensor, there are three ways of slicing $\TX$:
\begin{itemize}
    \item The \textbf{lateral slices} of $\TX$ are given by $\TX_{:,j,:} \in \mathbb{R}^{m\times 1 \times n}$, $1\leq j \leq p$, which are obtained by slicing the tensor left to right.    \item The \textbf{frontal slices} of $\TX$ are given by $\TX_{:,:,k} \in \mathbb{R}^{m\times p\times 1}$, $1\leq k \leq n$, which are obtained by slicing the tensor front to back.
    \item The \textbf{horizontal slices} are denoted as $\TX_{i,:,:} \in \mathbb{R}^{1\times p\times n}$, $1\leq i \leq m$ and are obtained by  slicing the tensor top to bottom.
\end{itemize}
\textbf{Tube fibers} are denoted as $\TX_{i,j,:} \in \mathbb{R}^{1\times 1\times n}$, $1\leq i \leq m, 1\leq j \leq p$ and are obtained by holding both the first and the second indices fixed and varying the third index.

\begin{figure}[ht]
\centering
\begin{tikzpicture}
\tensor[dim1 = 1, dim2 = 1, dim3 = 0.75, fill = lightgray,
	slice type = lateral, number of slices = 5, slice width = 0.5] (0,0) {};
	\node at (0+0.25,-0.5) {\small lateral slices};
	\node at (0+0.25,-1) {\small $\TX_{:,j,:} \in \mathbb{R}^{m \times 1 \times n}$};
\tensor[dim1 = 1, dim2 = 1, dim3 = 0.75, fill = lightgray,
	slice type = frontal, number of slices = 5, slice width = 0.5] (3.25,0) {};
	 \node at (3.25 + 0.5,-0.5){\small frontal slices};
	 \node at (3.25 + 0.5,-1){\small $\TX_{:,:,k} \in \mathbb{R}^{m \times p \times 1}$};
\foreach \j in {0,0.18,0.36,0.54,0.72}{
		\tensor[dim1 = 0.09, dim2 = 1, dim3 = 0.75, fill = lightgray] (6.25,\j) {}; };
	\node at (6.5 + 0.5, -0.5) {\small horizontal slices};
    \node at (6.5 + 0.5, -1) {\small $\TX_{i,:,:} \in \mathbb{R}^{1 \times p \times n}$};
\foreach \i in {9.25, 9.25+0.18, 9.25+0.36, 9.25+0.54, 9.25+0.72}{
	\foreach \j in {0,0.18,0.36,0.54,0.72}{
		\tensor[dim1 = 0.09, dim2 = 0.09, dim3 = 0.75, fill = lightgray] (\i,\j) {}; }};	
		\node at (9.5 + 0.5, -0.5) {\small tube fibers};
		\node at (9.5 + 0.5, -1) {\small $\TX_{i,j,:} \in \mathbb{R}^{1 \times 1 \times n}$};
\end{tikzpicture}
\caption{Slices and tube fibers of a third-order tensor of size $m \times p \times n$ and corresponding indexing in {\sc Matlab} notation.}
\label{fig:tensorslice}
\end{figure}

Throughout the paper, we will use \textsc{Matlab}-derived \textbf{\textsc{squeeze}} command to convert tensor slices into matrices. The operation $\M{A}=\squeeze(\TA)$ turns a lateral slice $\TA \in \mathbb{R}^{m\times 1 \times n}$ or a horizontal slice $\TA \in \mathbb{R}^{1\times m \times n}$ into a matrix $\M{A}\in \mathbb{R}^{m \times n}$ \cite{kilmer2021tensor}. We also use the \textbf{\textsc{vec}} and \textbf{\textsc{reshape}} operators. The $\vvec$ operator maps a matrix $\M{A} \in \mathbb{R}^{m\times n}$ to a vector $\V{a} \in \mathbb{R}^{mn\times 1}$ by column-major ordering \cite{knuth1997art}. In {\sc Matlab} notation, we have $\vvec(\M{A}) \equiv \M{A}(:)$. The $\reshape(\V{A}, [m,n])$ operation folds a given vector $\V{a} \in \mathbb{R}^{mn\times 1}$ into a matrix $\M{A}$ of size $m\times n$ by filling this matrix one column at a time.  

Additionally, we define the \textbf{\textsc{Tvec}} and the \textbf{\textsc{Tfold}} operations for unfolding and folding tensors. Given a tensor $\TX \in \mathbb{R}^{m\times p\times n}$, we have
\begin{equation*}
\V{x}=\tvec(\TX) = \left[\begin{array}{c}
\V{x}^{(1,1)}\\
\vdots\\
\V{x}^{(i,j)}\\
\vdots\\
\V{x}^{(m,p)}
\end{array}\right] = 
\left[\begin{array}{c}
\squeeze\left(\TX_{1,1,:}\right)\\
\vdots\\
\squeeze\left(\TX_{i,j,:}\right)\\
\vdots\\
\squeeze\left(\TX_{m,p,:}\right)
\end{array}\right].
\end{equation*}
for all $1\leq i\leq m, 1\leq j\leq p$. This operation is also equivalent to storing the three-dimensional array in sequential memory location using row-major ordering \cite{knuth1997art}. The $\tfold$ operation is defined to be the inverse action of $\tvec$, i.e. $\tfold\left(\tvec(\TX) \right) = \TX.$ To make these definitions more concrete, we can think of the operation $\tvec(\TX)$ as stacking the tube fibers of $\TX$ into a long vector $\V{x}$, and the $\tfold$ operator reverse the flattening back into a third-order tensor (referring to Fig.~(\ref{fig:vecmat}.a) for visual illustration).

\begin{figure}[ht]
\begin{subfigure}[t]{0.51\textwidth}
\centering
\begin{tikzpicture} 
\tensor[dim1 = 0.1, dim2 = 0.1, dim3 = 0.75, fill = carminepink] (0, 0.91) {};
\tensor[dim1 = 0.1, dim2 = 0.1, dim3 = 0.75, fill = mayablue] (0.2, 0.91) {};
\tensor[dim1 = 0.1, dim2 = 0.1, dim3 = 0.75, fill = asparagus] (2*0.2, 0.91) {};
\tensor[dim1 = 1, dim2 = 1, dim3 = 0.75, fill = nofill, back edges = 1] (0, 0) {};
\node at (0.75, -0.25) {\small $\TX_{i,j,:} \in \mathbb{R}^{1 \times 1 \times n}$};
\node at (1+2.5, 1.2) {\small \squeeze($\T{X}_{i,j,:}$) };
\draw [arrows = {-Stealth[scale width=1]}] (1+1.2, 0.9) -- (1+4, 0.91);
\draw [arrows = {-Stealth[scale width=1]}] (1+4, 0.8) -- (1+1.2, 0.8) ;
\node at (1+2.7, 0.5) {\small $\reshape(\V{x}^{(i,j)}, [1,1,n]$)};
\tensor[dim1 = 0.3, dim2 = 0.1, dim3 = 0.075, fill = carminepink] (2+4, 1.2) {};
\tensor[dim1 = 0.3, dim2 = 0.1, dim3 = 0.075, fill = mayablue] (2+4, 0.9) {};
\tensor[dim1 = 0.3, dim2 = 0.1, dim3 = 0.075, fill = asparagus] (2+4, 0.6) {};
\tensor[dim1 = 0.6, dim2 = 0.1, dim3 = 0.075, fill = nofill, back edges = 1] (2+4, 0) {};
\node at (2+4, -0.25) {\small $\Vx^{(i,j)} \in \mathbb{R}^{n \times 1}$};
\node at (-0.5, 1.6){\small (a)};
\end{tikzpicture}
\end{subfigure}
~ 
\begin{subfigure}[t]{0.5\textwidth}
\centering
\begin{tikzpicture}
[block/.style ={rectangle, text width=1.5em, align=center, minimum height=1.4em}]
\node at (-0.4, 0.8) {\small $\M{H}=\mat(\TA,\TB)=$};
\node [matrix, draw=black] (my matrix) at (2.8, 0.8)
{
    \node [block, draw=black, fill=carminepink!30]{}; & & &\\
     & \node [block, draw=black, fill=mayablue!30]{}; & &\\
& & \node [block, draw=black, fill=asparagus!30]{}; & \\
& & & \node {$\ddots$}; \\
};
\node at (3.3, 0.62) {\tiny $\M{H}^{(1,3)}$};
\node at (2.45, 1.2) {\tiny $\M{H}^{(1,2)}$};
\node at (1.6, 1.77) {\tiny $\M{H}^{(1,1)}$};
\node at (-1.6, 1.6){\small (b)};
\end{tikzpicture}
\end{subfigure}%
\caption{(a) Illustration of the tensor $\tvec(\T{X})$ and $\tfold(\V{x})$ operations. The vectorization and the reshaping of the first three tube fibers $\TX_{1,1,:}, \TX_{1,2,:},$ and $\TX_{1,3,:}$ are shown in this figure, and the rest are omitted. (b) Illustration of the tensor $\mat$ operation that flattens the two tensors $\TA\in \mathbb{R}^{m\times \ell \times n},\TB\in \mathbb{R}^{\ell \times p \times n}$ into a block-diagonal matrix $\M{H}\in \mathbb{R}^{mpn \times mp\ell}$.}
\label{fig:vecmat}
\end{figure}
We also define the \textbf{\textsc{Mat}} operation which flattens two tensors of specific sizes into a block-diagonal matrix (see Fig.~\ref{fig:vecmat}.b). Given $\T{A} \in \mathbb{R}^{m\times \ell \times n}$ and $\T{B} \in \mathbb{R}^{\ell \times p \times n}$, $\mat(\TA,\TB)$ yields $\M{H} \in \mathbb{R}^{mpn \times mp\ell}$ defined as 
\begin{equation*}
     \M{H} = \mat(\TA, \TB) =\underset{i,j}{\oplus}\,\M{H}^{(i,j)}, \quad \forall \, 1\leq i \leq m, 1\leq j\leq p,
\end{equation*}
where $\M{H}^{(i,j)} \in \mathbb{R}^{n\times \ell}$ with its entries specified as $\M{H}^{(i,j)}_{k,t} = \TA_{i,t,k}\TB_{t,j,k}$.

We use the \textsc{Matlab} \textbf{\textsc{permute}} operator to rearrange the dimensions of an array. In particular, tensor transposes are defined based on the cyclic permutations of the indices of each entry. Details are given in Sec.~\ref{subsec:definitions}. 

The Frobenius norm \cite{kolda2009tensor} of a tensor $\TX \in \mathbb{R}^{m\times p \times n}$ is defined analogously to the matrix case as $\displaystyle{\|\TX \|_{F} = \sqrt{\sum_{i=1}^{m} \sum_{j=1}^{p} \sum_{k=1}^{n} |\TX_{i,j,k}|^2}
}$.

\subsection{Overview of the BM-algebra}
\label{subsec:definitions}
The BM-product on a third-order tensor triplet was first introduced by Mesner and Bhattacharya \cite{mesner1990association,mesner1994ternary} and later generalized to tensors of arbitrary orders by Gnang and Filmus \cite{gnang2017spectra,gnang2020bhattacharya}. In this section, we will focus on the third-order case.
\begin{definition}
    For a third-order conformable tensor triplet $\TA \in \mathbb{R}^{m \times \ell \times n};  \TB \in \mathbb{R}^{m \times p \times \ell}$ and $\TC \in \mathbb{R}^{\ell \times p \times n}$, the BM-product $\TX=\bmp\left(\TA, \TB,\TC \right) \in \mathbb{R}^{m\times p\times n}$ is given entry-wise by
\begin{equation}
    \TX_{i,j,k} = \sum_{1\leq t \leq \ell} \TA_{i,t,k}\TB_{i,j,t}\TC_{t,j,k}.
    \label{eq:bmp_def}
\end{equation}
\label{def:bmp-entrywise}
\end{definition}
The BMP also conveniently expresses the notion of the BM-outer product. When $\ell = 1$, a BM-outer product corresponds to the BMP of the conformable order-$2$ tensor (matrix) slices, i.e. a lateral slice $\TA \in \mathbb{R}^{m\times 1\times n}$, a frontal slice $\TB \in \mathbb{R}^{m\times p\times 1}$, and a horizontal slice $\TC \in \mathbb{R}^{1\times p\times n}$. Consequently, the BM-product $\TX=\bmp\left(\TA, \TB,\TC \right)$ given in Eq. (\ref{eq:bmp_def}) can be written equivalently as a sum of BM outer-products of matrix slices
\begin{equation}
     \TX = \sum_{1\leq t \leq \ell} \bmp\left(\TA_{:,t,:},\TB_{:,:,t},\TC_{t,:,:}\right).
\label{eq:bm-outer-prod}
\end{equation}
A figure illustration is shown in Fig.~(\ref{fig:bmp_outer}).
\begin{figure}[H]
\centering
\begin{tikzpicture}
\tensor[dim1=1, dim2=1, dim3=0.8, fill = lightgray] (0.2, 0) {};
\node at (0.7, -0.3) {\small $\TX$};
\node at (2.15, 0.8) {$=$};
\tensor[dim1=1, dim2=0.1, dim3=0.8, fill = lightgray] (2.75, 0) {};
\tensor[dim1=1, dim2=1, dim3=0.1, fill = lightgray] (3.4, -0.1) {};
\tensor[dim1=0.1, dim2=1, dim3=0.8, fill = lightgray] (3.5, 1.1) {};
\node at (2.8, -0.25) {\small $\T{A}_{:,1,:}$};
\node at (4, -0.4) {\small $\T{B}_{:,:,1}$};
\node at (4.4, 1.9) {\small $\T{C}_{1,:,:}$};
\node at (3+2.25, 0.8){$+$};
\tensor[dim1=1, dim2=0.1, dim3=0.8, fill = lightgray] (3+2.75, 0) {};
\tensor[dim1=1, dim2=1, dim3=0.1, fill = lightgray] (3+3.4, -0.1) {};
\tensor[dim1=0.1, dim2=1, dim3=0.8, fill = lightgray] (3+3.5, 1.1) {};
\node at (3+2.8, -0.25) {\small $\T{A}_{:,2,:}$};
\node at (3+4, -0.4) {\small $\T{B}_{:,:,2}$};
\node at (3+4.4, 1.9) {\small $\T{C}_{2,:,:}$};
\node at (6.6+2.15, 0.8){$+ \cdots +$};
\tensor[dim1=1, dim2=0.1, dim3=0.8, fill = lightgray] (7+2.75, 0) {};
\tensor[dim1=1, dim2=1, dim3=0.1, fill = lightgray] (7+3.4, -0.1) {};
\tensor[dim1=0.1, dim2=1, dim3=0.8, fill = lightgray] (7+3.5, 1.1) {};
\node at (7+2.8, -0.25) {\small $\T{A}_{:,\ell,:}$};
\node at (7+4, -0.4) {\small $\T{B}_{:,:,\ell}$};
\node at (7+4.4, 1.9) {\small $\T{C}_{\ell,:,:}$};
\end{tikzpicture}
\caption{Illustration of the BM-product of a conformable tensor triplet $\TA,\TB$, and $\TC$ as a sum of BM-outer products of matrix slices.}
\label{fig:bmp_outer}
\end{figure}

\begin{remark} 
Every BM-product expresses a sum of BM-outer products and vice versa.
\end{remark}
The BM-outer product in Eq.(\ref{eq:bm-outer-prod}) induces a natural notion of the tensor BM-rank. 
\begin{definition}[BM-rank \cite{gnang2020bhattacharya}]
    The BM-rank of $\TX \in \mathbb{R}^{m\times p\times n}$ is the minimum number of the BM-outer products of conformable matrix slices that sum up to $\TX$.
\label{def:bm-rank}
\end{definition}

Tensor transpose was introduced analogous to matrix transpose \cite{gnang2017spectra,gnang2020bhattacharya}.
\begin{definition}
Suppose $\TX$ is a third-order tensor of size $m\times p \times n$, then $\TX$ has the following transpose operations which are given by cyclic permutations of the indices of each entry:
\begin{equation*}
\T{X}^{\top}_{i,j,k} = \T{X}_{k,i,j},\quad \TX^{\top} \in \mathbb{R}^{p\times n\times m}; \qquad \TX^{\top^2}_{i,j,k} = \T{X}_{j,k,i}, \quad \TX^{\top^2} \in \mathbb{R}^{n \times m\times p}.
\end{equation*}
\end{definition}
Note that in {\sc Matlab}, we can conveniently perform the transpose by the command $\TX^{\top} = \perm(\TX, [2,3,1])$.
Then $\TX^{\top^2} = \perm(\TX,[3,1,2])$ and
$\TX^{\top^3} = \perm(\TX,[1,2,3])=\TX.$

When $\TX$ is a BM-product of tensors $\TA, \TB$ and $\TC$, then the transpose of the BM-product is a BM-product of transposes such that
\begin{equation*}
    \TX^{\top}=\bmp(\TA,\TB,\TC)^{\top} = \bmp\left(\TB^{\top},\TC^{\top},\TA^{\top}\right).
\end{equation*}
Moreover, the BM-rank of a third-order tensor is equal to the BM-rank of its transpose.

\section{Matrix-to-BMD Connections}\label{sec:BMDconnection}
Two primary goals for this paper are  to show that video can be accurately captured by a low BM-rank decomposition, and to give methods for efficiently computing this decomposition.  A first step in this process is to give results that connect rank-revealing matrix factorizations to the BMD.  We will then use these results both in analyzing our generative model for surveillance video and as starting guesses for our iterative methods. 

\subsection{Matrix based decomposition}
Let $\T{X}\in \mathbb{R}^{m\times p\times n}$. Construct a matrix $\M{X}\in \mathbb{R}^{mn\times p}$ such that $\M{X}_{:,j} = \vvec(\T{X}_{:,j,:})$. 
In the context of video data, the $j$th column of $\M{X}$ corresponds to the vectorized set of pixels at the $j$th time step.

Our first result shows that we can rewrite this matrix decomposition as a BMD:
\begin{theorem}\label{thm:matrix2bmd}
Suppose $\M{X}$ has the following decomposition $\M{X}=\M{U}\M{V}^{\top}$, where $\M{U}\in \mathbb{R}^{mn\times \ell}$ and $\M{V}^{\top}\in \mathbb{R}^{\ell \times p}$. The matrix decomposition of $\M{X}$ can be represented by a tensor BMD of $\T{X}\in \mathbb{R}^{m\times p\times n}$.
\end{theorem}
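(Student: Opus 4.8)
The plan is to make the (informal) statement precise by exhibiting an explicit conformable triplet $(\TA,\TB,\TC)$, with $\TA\in\Real^{m\times\ell\times n}$, $\TB\in\Real^{m\times p\times\ell}$, $\TC\in\Real^{\ell\times p\times n}$, built directly from $\MU$ and $\MV$, so that $\TX=\bmp(\TA,\TB,\TC)$; this is then a BMD of $\TX$ with $\ell$ BM-outer-product terms.

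First I would unwind the definition of $\MX$. Since $\MX_{:,j}=\vvec\!\big(\squeeze(\TX_{:,j,:})\big)$ and $\vvec$ uses column-major ordering, the entry of $\MX$ in row $(k-1)m+i$ and column $j$ is exactly $\TX_{i,j,k}$, for $1\le i\le m$, $1\le j\le p$, $1\le k\le n$. Writing out $\MX=\MU\MV\Tra$ entrywise then gives
\[
\TX_{i,j,k}\;=\;\MX_{(k-1)m+i,\,j}\;=\;\sum_{t=1}^{\ell}\MU_{(k-1)m+i,\,t}\,\MV_{j,t}.
\]

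Next I would define the factor tensors. Let $\TA$ be the tensor whose $t$th lateral slice is the $t$th column of $\MU$ folded back into an $m\times n$ matrix, i.e. $\squeeze(\TA_{:,t,:})=\reshape(\MU_{:,t},[m,n])$, equivalently $\TA_{i,t,k}=\MU_{(k-1)m+i,\,t}$. Let $\TB$ be defined by $\TB_{i,j,t}=\MV_{j,t}$ (independent of $i$; its $t$th frontal slice is the rank-one matrix $\ones_{m}\,(\MV_{:,t})\Tra$), and let $\TC$ be the all-ones tensor, $\TC_{t,j,k}=1$. These sizes match exactly the conformability pattern required by \Def{bmp-entrywise}, and the degenerate/constant slices of $\TB$ and $\TC$ are perfectly legitimate tensor entries. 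Substituting into \Eqn{bmp_def} (equivalently, into the BM-outer-product form \Eqn{bm-outer-prod}) yields $\sum_{t}\TA_{i,t,k}\TB_{i,j,t}\TC_{t,j,k}=\sum_{t}\MU_{(k-1)m+i,\,t}\,\MV_{j,t}=\TX_{i,j,k}$, so $\TX=\bmp(\TA,\TB,\TC)$. Reading this through \Eqn{bm-outer-prod}, $\TX$ is a sum of $\ell$ BM-outer products of matrix slices, which in particular bounds its BM-rank by $\ell$ (the corollary to follow).

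The computation is essentially routine; the only places that demand care are the index bookkeeping in the column-major vectorization — keeping the linear row index $(k-1)m+i$ of $\MX$ aligned with the tensor pair $(i,k)$ — and checking that the constant slices of $\TB$ and $\TC$ genuinely make $(\TA,\TB,\TC)$ a conformable triplet in the BM sense. There is also a harmless freedom in how the scalar $\MV_{j,t}$ is split between $\TB$ and $\TC$ (all of it into $\TB$ with $\TC\equiv 1$, or the reverse, or any factorization in between); I would simply fix the cleanest assignment, as above.
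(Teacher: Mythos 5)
Your construction is correct and is essentially the paper's proof: both build $\TA$ by folding the columns of $\MU$ into lateral slices and make one of the remaining two factors an all-ones tensor, the only (immaterial) difference being that you place $\MV$ into $\TB$ with $\TC\equiv 1$, whereas the paper sets $\TB=\ones([m,p,\ell])$ and $\TC_{:,:,k}=\MV^{\top}$ for every $k$. Since the two assignments give the same entrywise sum $\sum_{t}\TA_{i,t,k}\MV_{j,t}$, either choice proves the statement.
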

\begin{proof}
Define $\T{A}_{:,t,:} = \reshape(\M{U}_{:,t},[m, n])$ for all $1\leq t \leq \ell$, and $\T{C}_{:,:,k} = \M{V}^{\top}$ for all $1\leq k \leq n$. Let $\T{B} = \ones([m,p,\ell])$. Then $\T{X} = \bmp(\T{A},\T{B},\T{C})$. 
\end{proof}

Note we can bound the BM rank of $\TX$ in terms of ranks of the data arranged as one of three matrices, where the three matrices come from unfolding each of $\TX, \TX^{\top}, \TX^{\top^2}.$   
\begin{corollary}
Under the conditions of Theorem \ref{thm:matrix2bmd}, the BM-rank of $\TX$ is bounded above by the minimum rank of the unfoldings of $\TX, \TX^\top, \TX^{\top^2}$. 
\end{corollary}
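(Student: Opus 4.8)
The plan is to reduce the corollary to \Thm{matrix2bmd} together with the transpose-invariance of BM-rank noted at the end of Section~\ref{sec:background}. First I would observe that the construction in the proof of \Thm{matrix2bmd} yields a BM-product $\TX = \bmp(\TA,\TB,\TC)$ in which the inner dimension $\ell$ equals the number of columns of $\M{U}$, i.e.\ the inner dimension of the factorization $\M{X}=\M{U}\M{V}^{\top}$. By \Eqn{bm-outer-prod}, such a BM-product is precisely a sum of $\ell$ BM-outer products of conformable matrix slices, so by \Def{bm-rank} we get $\mathrm{BM\text{-}rank}(\TX)\le \ell$.

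Next I would specialize the factorization: choosing $\M{U}\M{V}^{\top}$ to be a minimal-rank (rank-revealing) factorization of the $mn\times p$ unfolding $\M{X}$ forces $\ell=\mathrm{rank}(\M{X})$, which is admissible since \Thm{matrix2bmd} imposes no restriction on $\ell$. Hence $\mathrm{BM\text{-}rank}(\TX)\le \mathrm{rank}(\M{X})$. I would then run the identical argument on the tensors $\TX^{\top}\in\mathbb{R}^{p\times n\times m}$ and $\TX^{\top^2}\in\mathbb{R}^{n\times m\times p}$: each admits its own ``vectorized-lateral-slice'' unfolding, and \Thm{matrix2bmd} applied to these gives $\mathrm{BM\text{-}rank}(\TX^{\top})$ and $\mathrm{BM\text{-}rank}(\TX^{\top^2})$ bounded above by the ranks of the respective unfoldings. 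Since the BM-rank of a third-order tensor equals that of each of its transposes, all three quantities on the left coincide with $\mathrm{BM\text{-}rank}(\TX)$, so $\mathrm{BM\text{-}rank}(\TX)$ is at most the minimum of the three unfolding ranks, which is the claim.

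Rather than a real obstacle, the one point that needs care is bookkeeping: making precise that ``the unfoldings of $\TX,\TX^{\top},\TX^{\top^2}$'' refers to the three distinct matrix flattenings obtained by stacking the vectorized lateral slices of each of these tensors (so that \Thm{matrix2bmd} genuinely applies to each), and confirming that the minimal-rank factorization used in the specialization step indeed realizes the bound with the fewest BM-outer-product terms available from that construction. Everything else is a direct invocation of \Thm{matrix2bmd}, \Eqn{bm-outer-prod}, \Def{bm-rank}, and the transpose-invariance of BM-rank.
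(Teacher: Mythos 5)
Your argument is correct and follows exactly the route the paper intends: apply Theorem~\ref{thm:matrix2bmd} with a minimal-rank factorization of each unfolding of $\TX$, $\TX^{\top}$, $\TX^{\top^2}$ to bound the respective BM-ranks, then invoke the transpose-invariance of BM-rank noted at the end of Section~\ref{sec:background} to take the minimum. No gaps; the bookkeeping point you flag about which unfolding is meant is resolved precisely as you describe.
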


So the matrix decomposition can be viewed as a BMD with maximum rank of $\ell$.   
However, the converse is generally not true:  given a rank-$\ell$ BMD, the unfolding does not have to correspond to a low-rank matrix decomposition.  Thus, the BMD is
not just a rearrangement of a low-rank matrix decomposition of the data set.  If the data can be approximated by a tensor of small BM rank relative to the dimensions, it offers the opportunity for a fundamentally new way of compressing the data.

\subsection{Slice-wise based decomposition}
\label{subsec:slice-wise}
The following operates on the tensor directly by factoring frontal slices, each of which can be done independently (in parallel).

\begin{theorem}\label{thm:slice2bmd}
Suppose $\T{X}$ has the following slice-wise matrix decomposition 
$$\T{X}_{:,:,k}=\M{W}^{(k)}(\M{V}^{(k)})^{\top},\quad \forall \, 1\leq k\leq n$$
where $\M{W}^{(k)}\in \mathbb{R}^{m\times \ell}, \M{V}^{(k)}\in \mathbb{R}^{p \times \ell}$. 
Then the slice-wise decomposition of $\T{X}$ can be represented by a tensor BM-decomposition of $\T{X}$ as follows:

Let $\T{A}_{:,:,k} = \M{W}^{(k)}$, and $\T{C}_{:,:,k} = (\M{V}^{(k)})^{\top}$ for all $1\leq k \leq n$. Let $\T{B} = \ones([m,p,\ell])$. Then $\T{X} = \bmp(\T{A},\T{B},\T{C})$ with BM-rank at most $\ell$. 
\end{theorem}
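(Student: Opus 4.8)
The plan is to verify the stated identity $\TX=\bmp(\TA,\TB,\TC)$ directly and entrywise from Definition~\ref{def:bmp-entrywise}, and then to read the rank bound off the BM-outer-product expansion in Eq.~(\ref{eq:bm-outer-prod}). First I would check that the constructed triplet is conformable: stacking the frontal slices $\TA_{:,:,k}=\MW^{(k)}\in\mathbb{R}^{m\times\ell}$ over $k=1,\dots,n$ yields $\TA\in\mathbb{R}^{m\times\ell\times n}$; stacking $\TC_{:,:,k}=(\MV^{(k)})^\top\in\mathbb{R}^{\ell\times p}$ yields $\TC\in\mathbb{R}^{\ell\times p\times n}$; and $\TB=\ones([m,p,\ell])\in\mathbb{R}^{m\times p\times\ell}$. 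These are exactly the shapes required in Definition~\ref{def:bmp-entrywise}, so $\bmp(\TA,\TB,\TC)$ is well-defined and of size $m\times p\times n$.

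Next I would evaluate a generic entry. Since every entry of $\TB$ equals $1$, the middle factor in Eq.~(\ref{eq:bmp_def}) drops out and the triple sum collapses to an ordinary matrix product on the $k$th frontal slice:
\[
\bmp(\TA,\TB,\TC)_{i,j,k}=\sum_{1\le t\le\ell}\TA_{i,t,k}\,\TC_{t,j,k}=\sum_{1\le t\le\ell}\MW^{(k)}_{i,t}\,(\MV^{(k)})^\top_{t,j}=\left(\MW^{(k)}(\MV^{(k)})^\top\right)_{i,j}.
\]
By hypothesis this equals $(\TX_{:,:,k})_{i,j}=\TX_{i,j,k}$, and since $i,j,k$ were arbitrary we conclude $\TX=\bmp(\TA,\TB,\TC)$.

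Finally, for the rank claim I would appeal to Eq.~(\ref{eq:bm-outer-prod}): the BM-product just constructed is a sum of the $\ell$ BM-outer products $\bmp(\TA_{:,t,:},\TB_{:,:,t},\TC_{t,:,:})$, $t=1,\dots,\ell$, so by Definition~\ref{def:bm-rank} the BM-rank of $\TX$ is at most $\ell$.

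There is no genuine obstacle here; the statement is essentially bookkeeping. The only points requiring a moment's care are that the all-ones middle tensor $\TB$ is precisely what reduces the defining triple product to the slicewise matrix product $\MW^{(k)}(\MV^{(k)})^\top$, and that assembling the $\MW^{(k)}$ and $(\MV^{(k)})^\top$ as frontal slices produces tensors of the right order and dimensions. One could alternatively deduce the result by applying Theorem~\ref{thm:matrix2bmd} frontal-slice by frontal-slice, but the direct entrywise check above is shorter.
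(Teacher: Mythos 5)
Your proof is correct and follows the same route as the paper, which simply gives this construction and leaves the entrywise verification implicit: the all-ones $\TB$ collapses Eq.~(\ref{eq:bmp_def}) to the slicewise product $\MW^{(k)}(\MV^{(k)})^{\top}$, and the rank bound is immediate from the $\ell$-term BM-outer-product expansion. Nothing further is needed.
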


In \cite{gnang2020bhattacharya}, the BM-rank of a generic tensor $\TX \in \mathbb{R}^{m\times p\times n}$ is said to be bounded above by $\min\{m,p,n\}$. Theorem \ref{thm:slice2bmd} implies that the BM rank may actually be smaller.  We need only compute an SVD of each frontal slice $\TX_{:,:,k} = \M{U} \M{S} (\M{V})^{\top}$. Let $\ell = \max_k(r_k)$, where $r_k$ is the rank of the $k$-th frontal slice. Then the first $\ell$ columns of the product $\M{U}\M{S}$ for each face will become $\M{W}^{(k)}$, the first $\ell$ rows of each $\M{V}^{\top}$ become the $\ell$ rows of $(\M{V}^{(k)})^{\top}$. We call this the {\bf slicewise SVD}. 
Because we can repeat the above slicewise-SVD process for $\TX^\top$, $\TX^{\top^2}$ to get BM-decompositions:  
\begin{corollary} 
The BM-rank cannot exceed the smallest of the maximum slicewise rank of the $n+m+p$ matrix slices given by
\begin{equation*}
\TX_{:,:,k}, \quad \forall 1 \le k \le n; \qquad
\squeeze(\TX_{i,:,:}), \quad \forall \ 1 \leq i \leq m; \qquad
\squeeze(\TX_{:,j,:}), \quad \forall \  1 \leq j \leq p
\end{equation*}
\end{corollary}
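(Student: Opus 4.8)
The plan is to deduce the corollary from \Thm{slice2bmd} by applying it not only to $\TX$ but also to the two cyclic transposes $\TX^\top$ and $\TX^{\top^2}$, and then invoking the fact recorded at the end of Section~\ref{subsec:definitions} that the BM-rank of a tensor equals that of its transpose. Applied directly to $\TX$, \Thm{slice2bmd} (with $\ell=\max_{1\le k\le n} r_k$, $r_k$ the rank of $\TX_{:,:,k}$, obtained from slicewise SVDs $\TX_{:,:,k}=\M{W}^{(k)}(\M{V}^{(k)})^\top$) already gives a BMD of $\TX$ whose BM-rank is at most the maximum slicewise rank of the $n$ frontal slices $\TX_{:,:,k}$. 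So it remains to produce the analogous bounds from the other two families of slices and combine.

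The key bookkeeping step is to identify the frontal slices of $\TX^\top$ and $\TX^{\top^2}$ with squeezed slices of $\TX$. From $\TX^\top_{i,j,k}=\TX_{k,i,j}$ one reads off that the $k$-th frontal slice of $\TX^\top\in\mathbb{R}^{p\times n\times m}$ is exactly $\squeeze(\TX_{k,:,:})$, $1\le k\le m$; from $\TX^{\top^2}_{i,j,k}=\TX_{j,k,i}$ one reads off that the $k$-th frontal slice of $\TX^{\top^2}\in\mathbb{R}^{n\times m\times p}$ is $\squeeze(\TX_{:,k,:})^\top$, $1\le k\le p$. Since matrix transposition preserves rank, \Thm{slice2bmd} applied to $\TX^\top$ gives $\text{BM-rank}(\TX^\top)\le \max_{1\le i\le m}\text{rank}\bigl(\squeeze(\TX_{i,:,:})\bigr)$, and applied to $\TX^{\top^2}$ gives $\text{BM-rank}(\TX^{\top^2})\le \max_{1\le j\le p}\text{rank}\bigl(\squeeze(\TX_{:,j,:})\bigr)$.

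Finally, using $\text{BM-rank}(\TX)=\text{BM-rank}(\TX^\top)=\text{BM-rank}(\TX^{\top^2})$, the BM-rank of $\TX$ is simultaneously bounded above by the maximum slicewise rank over each of the three slice families, hence by the smallest of the three maxima, which is the asserted bound. The only point requiring care—and the main (minor) obstacle—is the index bookkeeping in the middle step: correctly matching which slice family of $\TX$ becomes the frontal family after one versus two cyclic transposes, and noting that the extra matrix transpose appearing in the $\TX^{\top^2}$ case is irrelevant for rank. Everything else is an immediate appeal to \Thm{slice2bmd} and the transpose-invariance of the BM-rank.
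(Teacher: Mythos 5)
Your proposal is correct and follows the paper's own route: apply the slicewise-SVD construction of \Thm{slice2bmd} to $\TX$, $\TX^{\top}$, and $\TX^{\top^2}$, identify the frontal slices of the transposes with the (squeezed, possibly transposed) horizontal and lateral slices of $\TX$, and invoke the transpose-invariance of BM-rank from Section~\ref{subsec:definitions} to take the minimum of the three bounds. Your index bookkeeping for the two cyclic transposes, including the harmless extra matrix transpose in the $\TX^{\top^2}$ case, matches what the paper's argument implicitly relies on.
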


By considering the generative model in Sec.~\ref{sec:generative}, we will see that this result is especially useful in the context of video analysis.

\section{Background on Video Surveillance Separation and Modeling}
\label{sec:vid_application}
The task of video background and foreground separation is an important computer vision application \cite{candes2011robust, li2004statistical, tian2005robust,garcia2020background}. Background subtraction is often used in this task in order to detect moving foreground objects. Hence, accurate modeling of the video background under complex, diverse, and cluttered conditions is of paramount importance for most of the background/foreground separation methods. One of the more promising research directions in the field focuses on utilizing matrix decomposition methods. By vectorizing video frames into vectors and stacking them column-wise into a matrix, decomposition methods such as the robust principal component analysis (RPCA) \cite{candes2011robust}, and the dynamic mode decomposition (DMD) \cite{kutz2016dynamic} method can be used to separate the video data matrix into a stationary background and moving foreground components which are usually assumed to be low-rank and sparse respectively. 

However, flattening three-dimensional video data into a matrix presents disadvantages. For instance, vectorization destroys the intrinsic spatial structure within frames. Moreover, complex disturbances within the background can be ignored after flattening, which would potentially lead to a lower background reconstruction quality \cite{li2022tensor}. To overcome these difficulties for matrix-based methods, an increasing number of tensor decomposition methods have been proposed dealing with background subtraction  \cite{li2022tensor, karim2020accurate, kajo2018svd, sobral2015online}. Similar to matrix-based methods, the tensor-based techniques model the background video with a low tensor rank component and obtain the sparse foreground by subtracting the background from the video data. Although for most of the aforementioned methods the low-rank property of the background suggests that it is also possible to compress the stationary background for more efficient data storage, the step of background subtraction does not guarantee a compressed representation of the foreground video.

\subsection{Dynamic Mode Decomposition (DMD)}\label{subsec:dmd}
Though initially introduced in the fluid dynamics community for extracting spatiotemporal patterns  \cite{kutz2016dynamic}, DMD has also been effective in the video context for separating the stationary background from foreground motions by differentiating between the DMD modes with near-zero frequency and remaining modes with frequencies bounded away from the origin \cite{grosek2014dynamic}. Better visual results and superior computational efficiency compared to the Robust-PCA algorithm of the DMD method have drawn great attention in the computer vision community with new algorithms developed based on DMD for improved accuracy and efficiency. A few examples include the compressed DMD \cite{erichson2019compressed}, randomized DMD \cite{erichson2016randomized}, DMD via dictionary learning\cite{haq2020dynamic}, and multi-resolution DMD for object tracking with varying motion rates \cite{kutz2015multi}. 

More recently, the connection between the general DMD method and the CP decomposition has been studied \cite{redman2021koopman}. When multiple experiments were conducted, data matrices generated from the experiments were collected and ordered as frontal slices of a third-order tensor. Then performing the DMD on each frontal slice of the data tensor decomposes it into a sum of vector outer products of the DMD vector triplets consisting of the DMD modes, DMD eigenfunctions, and the corresponding eigenvalues. It is easily shown that this decomposition is equivalent to taking a CP decomposition of the third-order tensor when the experimental data are collected from distinct sources with a single exponential growth or decay and/or oscillatory dynamics. 

More specifically, for the video background/foreground separation application \cite{kutz2016dynamic}, the DMD stationary background video sequence is reconstructed by taking 
\begin{equation} \label{eq:dmd_bg}
    \M{X}_{\text{DMD}}^{\text{Low-Rank}} = b_p\V{\varphi}_{p}e^{\omega_{p}\V{t}} \in \mathbb{R}^{mn\times 1},
\end{equation}
where $\omega_{p}$, $p \in \{1,2,...,\ell\}$, is the Fourier mode satisfying $\Vert \omega_{p} \Vert \approx 0$. The vector $\V{\varphi}_{p}$ is the associated $p$-th DMD mode, and $b_p$ is the initial amplitude of the corresponding mode. The vector $\V{t}=[t_1, t_2,\dots,t_{p}]$ contains the times at which the frames were collected.

In the present work, we can view the DMD results obtained from decomposing a single data matrix as a tensor BM decomposition by Theorem \ref{thm:matrix2bmd}. This is particularly meaningful to our video application since the DMD method for video background/foreground separation developed originally is applied to a single data matrix flattened from a third-order video tensor \cite{grosek2014dynamic}. Detailed derivations on rewriting the DMD results into order-3 tensor BMP is provided in the Supplement, where we will also show that due to the nonlinearity of the foreground motion in general, the BMD factor tensor triplet does not directly convert back to DMD modes and eigenvalues. 

\subsection{Spatiotemporal Slice-based SVD (SS-SVD)}
\label{subsec:ss-svd}
In the recent study by Kajo et al. \cite{kajo2018svd}, the slice-wise SVD discussed previously was applied to the spatiotemporal slices of an input video tensor to extract the background information. The authors refer to this method as spatiotemporal slice-based SVD (SS-SVD). Since it has a connection to our initialization step in the BMD-ALS algorithm, we will briefly describe their method here.

Given a video of $p$ frames with size $m \times n$, we order the frames as lateral slices to form a third-order tensor $\TX$ of size $m \times p\times n$. Each frontal slice, $\TX_{:,:,k}, 1\leq k \leq n$, or horizontal slice $\TX_{i, :, :}, 1\leq i \leq m$ is called a spatiotemporal slice containing both space and time information. Here we construct the SS-SVD using frontal spatiotemporal slices. Specifically, the SS-SVD method applies a low-rank approximation to each frontal spatiotemporal slice using truncated SVDs with a target matrix rank $\ell$:
\begin{equation} \label{eq:sssvd}
\TX_{:,:,k} \approx \sum_{t=1}^{\ell} \Vu^{(k)}_t \sigma_t^{(k)} \left(\V{v}_t^{(k)}\right)^{\top}, 1\leq k \leq n.
\end{equation}

When $t=1$, the rank-1 matrix reconstruction corresponds to the largest singular value of each slice which the authors of \cite{kajo2018svd} argue captures mainly the dominant background scene across slices\footnote{We note that if the data is non-negative, the first rank-1 triples will be non-negative by the Perrone-Frobenius theorem.}. We denote this set of tensor slices as $\hat{\TX}^{\text{bg}}_{:,:,k}$ for all $ k=1,\dots,n$ and is given by 
\begin{equation}
    \hat{\TX}^{\text{bg}}_{:,:,k} =  \Vu^{(k)}_{1} \sigma_{1}^{(k)} \left(\V{v}_{1}^{(k)}\right)^{\top}. 
    \label{eq:ssvd_bg}
\end{equation}
The foreground in \cite{kajo2018svd} is  taken to be the difference between the given video data and the reconstructed background, i.e. $\hat{\TX}^{\text{fg}} = \TX - \hat{\TX}^{\text{bg}}$, {\it which is not guaranteed to be a compressed representation}.  

However, the approximation (\ref{eq:sssvd}), which we will call the {\bf Slicewise SVD}, by our results in Section.~\ref{subsec:slice-wise}, {\it can be interpreted as a BMD of BM-rank at most $\ell$}. 
\begin{theorem} \label{th:slicewise}
The tensor $\hat{\TX}$ defined with 
\[ \hat{\TX}_{:,:,k} = \MW^{(k)} (\M{V}^{(k)})^T,\]
where $\V{u}_t^{(k)} \sigma_t^{(k)}$ from (\ref{eq:sssvd}) are the $\ell$ columns of $\MW^{(k)}$ and $\V{v}_t^{(k)}$ are the $\ell$ columns of $\M{V}^{(k)}$ has BM-rank at most $\ell$ and the error in the approximation is
\[ \| \TX - \hat{\TX} \|_F^2 = \sum_{k=1}^{n} \sum_{i=\ell+1}^{r} (\sigma_{i}^{(k)})^2. \]
\end{theorem}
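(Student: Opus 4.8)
The plan is to prove the two assertions separately: the BM-rank bound follows immediately from \Thm{slice2bmd}, and the error identity reduces to a slice-by-slice application of the Eckart--Young theorem.

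For the BM-rank claim, observe that by construction $\hat{\TX}_{:,:,k} = \MW^{(k)}(\M{V}^{(k)})^{\top}$ with $\MW^{(k)}\in\mathbb{R}^{m\times\ell}$ (its columns being the scaled left singular vectors $\Vu^{(k)}_t\sigma^{(k)}_t$) and $\M{V}^{(k)}\in\mathbb{R}^{p\times\ell}$ (its columns being the right singular vectors $\V{v}^{(k)}_t$). This is exactly the hypothesis of \Thm{slice2bmd}, so taking $\TA_{:,:,k}=\MW^{(k)}$, $\TC_{:,:,k}=(\M{V}^{(k)})^{\top}$, and $\TB=\ones([m,p,\ell])$ yields $\hat{\TX}=\bmp(\TA,\TB,\TC)$, and hence $\hat{\TX}$ has BM-rank at most $\ell$. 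Nothing further is needed for this part.

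For the error identity, I would first note that the squared Frobenius norm of a third-order tensor splits over frontal slices: directly from the entrywise definition of $\|\cdot\|_F$ in \Sec{background}, since each entry $\TX_{i,j,k}$ lies in exactly one frontal slice,
\[ \|\TX-\hat{\TX}\|_F^2 = \sum_{k=1}^{n}\|\TX_{:,:,k}-\hat{\TX}_{:,:,k}\|_F^2. \]
Then, for each fixed $k$, the sum $\sum_{t=1}^{\ell}\Vu^{(k)}_t\sigma^{(k)}_t(\V{v}^{(k)}_t)^{\top}$ in \Eqn{sssvd} is the rank-$\ell$ truncation of the thin SVD $\TX_{:,:,k}=\sum_{t=1}^{r}\Vu^{(k)}_t\sigma^{(k)}_t(\V{v}^{(k)}_t)^{\top}$; by orthonormality of the singular vectors (equivalently, by Eckart--Young) the residual is the tail sum of squared singular values, $\|\TX_{:,:,k}-\hat{\TX}_{:,:,k}\|_F^2=\sum_{i=\ell+1}^{r}(\sigma^{(k)}_i)^2$. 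Summing over $k$ gives the claimed formula.

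This argument has no genuine obstacle; the only points needing care are (i) stating precisely what the upper limit $r$ denotes — it should be read as a common upper bound on the slice ranks, with the convention $\sigma^{(k)}_i=0$ for $i$ past the rank of the $k$-th slice, so that $\ell\le r$ and all sums are well defined — and (ii) being explicit that the Frobenius norm decouples across frontal slices, which is the only place the slicewise structure of $\hat{\TX}$ is actually used. Everything else is bookkeeping.
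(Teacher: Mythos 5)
Your proposal is correct and matches the argument the paper intends: the BM-rank bound is exactly the content of Theorem~\ref{thm:slice2bmd} applied to the slicewise factors $\MW^{(k)}$, $(\M{V}^{(k)})^{\top}$ with $\TB=\ones([m,p,\ell])$, and the error identity is the frontal-slice decoupling of the Frobenius norm combined with the standard truncated-SVD (Eckart--Young) residual on each slice. Your clarifying conventions on $r$ (zero-padding singular values past each slice's rank) are a reasonable reading of the statement and do not change the substance.
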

In the next section, using our generative model for insight, we will explain the utility of this result.

\section{Generative Model for Surveillance Video}
\label{sec:generative}
In this section, we consider a BM product-based model of one or two moving objects on a static background.  
Since in our model the stationary background image and the moving foreground object can be incorporated into separate BM-rank 1 components, a low BM-rank tensor decomposition separates the video data into compressed representations of both the background and the foreground simultaneously. 

\subsection{Generative Spatiotemporal Model With Low BM-rank}
Suppose we have a static background image $\M{X} \in \mathbb{R}^{m \times n}$, and an object (of constant intensity $\alpha$ and constant rectangular size $r_1 \times r_2$ for the time being) moving across the background over $p$ time steps. At time $k$, $1\leq k \leq p$, the object is located at $\mathcal{I}_k \times \mathcal{J}_k: = [i_k, i_k+r_1] \times [j_k, j_k+r_2]$ with $1\leq i_k \leq m-r_1$ and $1\leq j_k \leq n-r_2$ . Consider a binary image of the same size as the background image denoted $\M{M}_{\mathcal{I}_k, \mathcal{J}_k}$, where
\begin{equation}
    \M{M}_{\mathcal{I}_k,\mathcal{J}_k} [i,j] = \begin{cases}
\alpha, \text{ if } (i,j) \in \mathcal{I}_k \times \mathcal{J}_k\\
0, \text{ otherwise }\\
\end{cases}.
\end{equation}
Next, define a binary vector $\Vb^{(k)} \in \mathbb{R}^{m}$ such that $\Vb^{(k)}_i = 1$ when $i\in \mathcal{I}_k$ and is $0$ otherwise. Similarly, define $\Vc^{(k)} \in \mathbb{R}^{n}$ such that $\Vc^{(k)}_j = 1$ when $j\in \mathcal{J}_k$ and is $0$ otherwise. Finally,  let $\M{1}_{m\times n} \in \mathbb{R}^{m \times n}$ be a matrix of all-ones. Then the rectangle image at time $k$ can be expressed as
\begin{equation}
\M{M}_{\mathcal{I}_k,\mathcal{J}_k} = \alpha \, \diag (\Vb^{(k)} ) \cdot \M{1}_{m\times n}  \cdot \diag (\Vc^{(k)} ), 
\end{equation}
where $\diag(\V{v})$ means the square, diagonal matrix with the diagonal entries provided by the vector argument $\V{v}$.
Hence, the $k$-th video frame that captures both the background image and the moving object, denoted as $\M{T}^{(k)}$, can be expressed as
\begin{equation}
    \begin{split}
        \M{T}^{(k)} & := \M{X} - \diag (\Vb^{(k)} ) \cdot \M{X}  \cdot \diag (\Vc^{(k)} ) + \M{M}_{\mathcal{I}_k,\mathcal{J}_k} \\
        & = \diag(\M{1}_{m}) \M{X} \diag(\M{1}_{n}) +  \diag (\Vb^{(k)} ) \left(-\M{X} + \alpha \M{1}_{m\times n}  \right) \diag (\Vc^{(k)} ).
    \end{split}
    \label{eq:img}
\end{equation}
The term $-\diag (\Vb^{(k)} ) \M{X} \diag (\Vc^{(k)} )$ ``zeros out" the entries in the stationary image where the object is living in this frame, and the term $\M{M}_{\mathcal{I}_k,\mathcal{J}_k}$ puts the constant value rectangle over those pixels.  Both are necessary to ensure that the object retains its constant value. 

Now we show this simple video can be modeled using a low-rank BMD. Specifically, 
\begin{enumerate}
    \item Define the $m \times 2 \times n$ tensor $\TA$ with $\TA_{:,1,:}=\MX$ and $\TA_{:,2,:}= \left(-\MX + \alpha \M{1}_{m\times n}\right)$,
    \item Define the $m \times p \times 2$ tensor $\TB$ with $ \TB_{:,:,1} = \M{1}_{m\times p}$ and $\TB_{:,k,2}= \Vb^{(k)}$,
    \item Define the $2 \times p \times n$ tensor $\TC$ with $\TC_{1,:,:} = \M{1}_{p\times n}$ and $\TC_{2,k,:} = \Vc^{(k)}$. 
\end{enumerate}
Then the video tensor $\TX=\bmp(\TA,\TB,\TC)$ is a sum of two BM-rank 1 tensors: the stationary background tensor and the foreground moving object tensor, i.e. 
\begin{equation}  \label{eq:fgbg}
    \TX = \TX^{\text{bg}}+\TX^{\text{fg}} := \bmp(\TA_{:,1,:},\TB_{:,:,1},\TC_{1,:,:}) + \bmp(\TA_{:,2,:},\TB_{:,:,2},\TC_{2,:,:}).
\end{equation}
The $k$-th lateral slice of tensor $\TX$ is given exactly by Eq. (\ref{eq:img}), i.e. $\squeeze(\TX_{:,k,:}) = \M{T}^{(k)}$.  

We can augment this generative spatiotemporal video model in straightforward ways. For example, if there is a second object that is moving either in the same horizontal or same vertical direction as the first, 
we can augment the columns of $\TB_{:,2,:}$ (vertical) or $\TC_{2,:,:}$ (horizontal) to capture the other object's motion without increasing the BM rank (see Figure \ref{fig:genModel_rk2}). If the two objects are moving with different trajectories, and possibly with different intensities, we can model the video with at most three BM-rank 1 terms (see Figure \ref{fig:genModel_rk3}).

The point is that our generative model illustrates why we might reasonably expect to be able to capture the foreground and the background with the compressive power of a low BM-rank approximation.  Moreover, in one of the numerical experiments, we will use this generative model to illustrate superiority of the our low BM rank model vs. a state-of-the-art DMD-based approach.    

\begin{figure}[ht]
\centering
\begin{tikzpicture}
\foreach \j in {0, 0.65, 0.65*2, 0.65*3, 0.65*4}{
\tensor[dim1 = 1.5, dim2 = 0.15, dim3 = 1, fill = ceil] (-0.5+\j, 0) {};
}
\tensor[dim1 = 1.5, dim2 = 0.45, dim3 = 1, fill = ceil] (-0.5+0.65*5, 0) {};
\tensor[dim1 = 0.15, dim2 = 0, dim3 = 0.12, fill = green] (0, 0.5) {};
\tensor[dim1 = 0.15, dim2 = 0, dim3 = 0.12, fill = green] (0.65-0.06, 0.5-0.075) {}; 
\tensor[dim1 = 0.15, dim2 = 0, dim3 = 0.12, fill = green] (0.65*2-0.06, 0.5+0.075) {};
\tensor[dim1 = 0.15, dim2 = 0, dim3 = 0.12, fill = green] (0.65*3-0.06*2, 0.5+0.075*2) {};
\tensor[dim1 = 0.15, dim2 = 0, dim3 = 0.12, fill = green] (0.65*4-0.06*3, 0.5+0.075*2) {};
\tensor[dim1 = 0.15, dim2 = 0, dim3 = 0.12, fill = green] (0, 1.15) {};
\tensor[dim1 = 0.15, dim2 = 0, dim3 = 0.12, fill = green] (0.65-0.06, 1.15-0.075*2) {};
\tensor[dim1 = 0.15, dim2 = 0, dim3 = 0.12, fill = green] (0.65*2-0.06, 1.15-0.075*4) {};
\tensor[dim1 = 0.15, dim2 = 0, dim3 = 0.12, fill = green] (0.65*3-0.06*2, 1.15-0.075*3) {};
\tensor[dim1 = 0.15, dim2 = 0, dim3 = 0.12, fill = green] (0.65*4-0.06*3, 1.15-0.075*2) {};
\draw [arrows = {-Stealth[scale width=1]}] (0, 2.25) -- (3.2, 2.25);
\node at (1.6, 2.5) {\small Time};
\node[rotate=90] at (-0.85, 0.75) {\small Height};
\node[rotate=48] at (-0.65, 1.85) {\small Width};
\node at (4.15, 1) {$=$};
\node at (1.1, -0.4) {\small $\M{T}^{(k)}$};
\tensor[dim1 = 1.5, dim2 = 0.15, dim3 = 1, fill = ceil] (4.5, 0) {};
\tensor[dim1 = 1.5, dim2 = 1.2, dim3 = 0.12, fill = gray] (4.5+0.9, -0.15) {};
\tensor[dim1 = 0.15, dim2 = 1.2, dim3 = 1, fill = gray] (4.5+0.95, 1.6) {};
\node at (5.5, -1) {\small Background};
\node at (7.2, 1) {$+$};
\node at (4.55, -0.35) {$\T{A}_{:,1,:}$};
\node at (6, -0.5) {$\T{B}_{:,:,1}$};
\node at (6.5, 2.5) {$\T{C}_{1,:,:}$};
\tensor[dim1 = 1.5, dim2 = 0.12, dim3 = 1, fill = mlddarkgreen] (8-0.15, 0) {};
\tensor[dim1 = 1.5, dim2 = 1.2, dim3 = 0.12, fill = nofill] (8+0.75, -0.15) {};
\foreach \j in {0.15, 0.15*2, 0.15*3, 0.15*4, 0.15*5}{
\tensor[dim1 = 1.5, dim2 = 0, dim3 = 0, fill = nofill] (8+0.75+\j, -0.15) {};
\tensor[dim1 = 0, dim2 = 0, dim3 = 0.12, fill = nofill] (8+0.75+\j, 1.35) {};
}
\tensor[dim1 = 0.15, dim2 = 0.15, dim3 = 0, fill = black] (8.75, 0) {};
\tensor[dim1 = 0.15, dim2 = 0.15, dim3 = 0, fill = black] (8.75+0.15, 0) {};
\tensor[dim1 = 0.15, dim2 = 0.15, dim3 = 0, fill = black] (8.75+0.15*2, 0.15) {};
\tensor[dim1 = 0.15, dim2 = 0.15, dim3 = 0, fill = black] (8.75+0.15*3, 0.15*2) {};
\tensor[dim1 = 0.15, dim2 = 0.15, dim3 = 0, fill = black] (8.75+0.15*4, 0.15*2) {};
\tensor[dim1 = 0.15, dim2 = 0.15, dim3 = 0, fill = black] (8.75, 0.88) {};
\tensor[dim1 = 0.15, dim2 = 0.15, dim3 = 0, fill = black] (8.75+0.15, 0.88-0.15) {};
\tensor[dim1 = 0.15, dim2 = 0.15, dim3 = 0, fill = black] (8.75+0.15*2, 0.88-0.15) {};
\tensor[dim1 = 0.15, dim2 = 0.15, dim3 = 0, fill = black] (8.75+0.15*3, 0.88) {};
\tensor[dim1 = 0.15, dim2 = 0.15, dim3 = 0, fill = black] (8.75+0.15*4, 0.88+0.15) {};
\tensor[dim1 = 0.15, dim2 = 1.2, dim3 = 1, fill = nofill] (8+0.75, 1.6) {};
\foreach \j in {0.15, 0.15*2, 0.15*3, 0.15*4, 0.15*5}{
\tensor[dim1 = 0, dim2 = 0, dim3 = 1, fill = nofill] (8+0.75+\j, 1.75) {};
\tensor[dim1 = 0.15, dim2 = 0, dim3 = 0, fill = nofill] (8+0.75+\j, 1.6) {};
}
\tensor[dim1 = 0, dim2 = 0.15, dim3 = 0.12, fill = black] (9.1, 2.1) {};
\tensor[dim1 = 0, dim2 = 0.15, dim3 = 0.12, fill = black] (9.1+0.09, 2.1-0.06) {};
\tensor[dim1 = 0, dim2 = 0.15, dim3 = 0.12, fill = black] (9.1+0.15+0.09, 2.1-0.06) {};
\tensor[dim1 = 0, dim2 = 0.15, dim3 = 0.12, fill = black] (9.1+0.15+0.09*2, 2.1-0.12) {};
\tensor[dim1 = 0, dim2 = 0.15, dim3 = 0.12, fill = black] (9.1+0.15+0.09*3, 2.1-0.06*3) {};
\node at (9, -1) {\small Foreground};
\node at (4.55+3.35, -0.35) {$\T{A}_{:,2,:}$};
\node at (6+3.35, -0.5) {$\T{B}_{:,:,2}$};
\node at (6.5+3.35, 2.5) {$\T{C}_{2,:,:}$};
\end{tikzpicture}
\caption{Illustration of the generative spatiotemporal video model with two constant valued, same intensity, rectangular objects moving in the same horizontal direction, on a constant background.}
\label{fig:genModel_rk2}
\end{figure}

\begin{figure}[ht]
\centering
\begin{tikzpicture} 
\foreach \j in {0, 0.65, 0.65*2, 0.65*3, 0.65*4}{
\tensor[dim1 = 1.5, dim2 = 0.15, dim3 = 1, fill = ceil] (-0.5+\j, 0) {};
}
\tensor[dim1 = 1.5, dim2 = 0.45, dim3 = 1, fill = ceil] (-0.5+0.65*5, 0) {};
\tensor[dim1 = 0.15, dim2 = 0, dim3 = 0.12, fill = red] (0, 0.5) {};
\tensor[dim1 = 0.15, dim2 = 0, dim3 = 0.12, fill = red] (0.65-0.06, 0.5-0.075) {}; 
\tensor[dim1 = 0.15, dim2 = 0, dim3 = 0.12, fill = red] (0.65*2-0.06, 0.5+0.075) {};
\tensor[dim1 = 0.15, dim2 = 0, dim3 = 0.12, fill = red] (0.65*3-0.06*2, 0.5+0.075*2) {};
\tensor[dim1 = 0.15, dim2 = 0, dim3 = 0.12, fill = red] (0.65*4-0.06*3, 0.5+0.075*2) {};
\tensor[dim1 = 0.15, dim2 = 0, dim3 = 0.12, fill = green] (-0.17, 1.15) {};
\tensor[dim1 = 0.15, dim2 = 0, dim3 = 0.12, fill = green] (0.65-0.17, 1.15-0.075*2) {};
\tensor[dim1 = 0.15, dim2 = 0, dim3 = 0.12, fill = green] (0.65*2-0.17, 1.15-0.075*4) {};
\tensor[dim1 = 0.15, dim2 = 0, dim3 = 0.12, fill = green] (0.65*3-0.17-0.06, 1.15-0.075*3) {};
\tensor[dim1 = 0.15, dim2 = 0, dim3 = 0.12, fill = green] (0.65*4-0.17-0.06*2, 1.15-0.075*2) {};
\draw [arrows = {-Stealth[scale width=1]}] (0, 2.25) -- (3.2, 2.25);
\node at (1.6, 2.5) {\small Time};
\node[rotate=90] at (-0.85, 0.75) {\small Height};
\node[rotate=48] at (-0.65, 1.85) {\small Width};
\node at (4.15, 1) {$=$};
\node at (1.1, -0.4) {\small $\M{T}^{(k)}$};
\tensor[dim1 = 1.5, dim2 = 0.15, dim3 = 1, fill = ceil] (4.5, 0) {};
\tensor[dim1 = 1.5, dim2 = 1.2, dim3 = 0.12, fill = gray] (4.5+0.9, -0.15) {};
\tensor[dim1 = 0.15, dim2 = 1.2, dim3 = 1, fill = gray] (4.5+0.95, 1.6) {};
\node at (5.5, -1) {\small Background};
\node at (7.2, 1) {$+$};
\node at (4.55, -0.35) {$\T{A}_{:,1,:}$};
\node at (6, -0.5) {$\T{B}_{:,:,1}$};
\node at (6.5, 2.5) {$\T{C}_{1,:,:}$};
\tensor[dim1 = 1.5, dim2 = 0.12, dim3 = 1, fill = rosevale] (8-0.15, 0) {};
\tensor[dim1 = 1.5, dim2 = 1.2, dim3 = 0.12, fill = nofill] (8+0.75, -0.15) {};
\foreach \j in {0.15, 0.15*2, 0.15*3, 0.15*4, 0.15*5}{
\tensor[dim1 = 1.5, dim2 = 0, dim3 = 0, fill = nofill] (8+0.75+\j, -0.15) {};
\tensor[dim1 = 0, dim2 = 0, dim3 = 0.12, fill = nofill] (8+0.75+\j, 1.35) {};
}
\tensor[dim1 = 0.15, dim2 = 0.15, dim3 = 0, fill = black] (8.75, 0) {};
\tensor[dim1 = 0.15, dim2 = 0.15, dim3 = 0, fill = black] (8.75+0.15, 0) {};
\tensor[dim1 = 0.15, dim2 = 0.15, dim3 = 0, fill = black] (8.75+0.15*2, 0.15) {};
\tensor[dim1 = 0.15, dim2 = 0.15, dim3 = 0, fill = black] (8.75+0.15*3, 0.15*2) {};
\tensor[dim1 = 0.15, dim2 = 0.15, dim3 = 0, fill = black] (8.75+0.15*4, 0.15*2) {};
\tensor[dim1 = 0.15, dim2 = 1.2, dim3 = 1, fill = nofill] (8+0.75, 1.6) {};
\foreach \j in {0.15, 0.15*2, 0.15*3, 0.15*4, 0.15*5}{
\tensor[dim1 = 0, dim2 = 0, dim3 = 1, fill = nofill] (8+0.75+\j, 1.75) {};
\tensor[dim1 = 0.15, dim2 = 0, dim3 = 0, fill = nofill] (8+0.75+\j, 1.6) {};
}
\tensor[dim1 = 0, dim2 = 0.15, dim3 = 0.12, fill = black] (9.1, 2.1) {};
\tensor[dim1 = 0, dim2 = 0.15, dim3 = 0.12, fill = black] (9.1+0.09, 2.1-0.06) {};
\tensor[dim1 = 0, dim2 = 0.15, dim3 = 0.12, fill = black] (9.1+0.15+0.09, 2.1-0.06) {};
\tensor[dim1 = 0, dim2 = 0.15, dim3 = 0.12, fill = black] (9.1+0.15+0.09*2, 2.1-0.12) {};
\tensor[dim1 = 0, dim2 = 0.15, dim3 = 0.12, fill = black] (9.1+0.15+0.09*3, 2.1-0.06*3) {};
\node at (10.5, -1) {\small Foreground};
\node at (4.55+3.35, -0.35) {$\T{A}_{:,2,:}$};
\node at (6+3.35, -0.5) {$\T{B}_{:,:,2}$};
\node at (6.5+3.35, 2.5) {$\T{C}_{2,:,:}$};

\tensor[dim1 = 1.5, dim2 = 0.12, dim3 = 1, fill = mlddarkgreen] (11.5-0.15, 0) {};
\tensor[dim1 = 1.5, dim2 = 1.2, dim3 = 0.12, fill = nofill] (11.5+0.75, -0.15) {};
\foreach \j in {0.15, 0.15*2, 0.15*3, 0.15*4, 0.15*5}{
\tensor[dim1 = 1.5, dim2 = 0, dim3 = 0, fill = nofill] (11.5+0.75+\j, -0.15) {};
\tensor[dim1 = 0, dim2 = 0, dim3 = 0.12, fill = nofill] (11.5+0.75+\j, 1.35) {};
}
\tensor[dim1 = 0.15, dim2 = 0.15, dim3 = 0, fill = black] (12.25, 0.88) {};
\tensor[dim1 = 0.15, dim2 = 0.15, dim3 = 0, fill = black] (12.25+0.15, 0.88-0.15) {};
\tensor[dim1 = 0.15, dim2 = 0.15, dim3 = 0, fill = black] (12.25+0.15*2, 0.88-0.15) {};
\tensor[dim1 = 0.15, dim2 = 0.15, dim3 = 0, fill = black] (12.25+0.15*3, 0.88) {};
\tensor[dim1 = 0.15, dim2 = 0.15, dim3 = 0, fill = black] (12.25+0.15*4, 0.88+0.15) {};
\tensor[dim1 = 0.15, dim2 = 1.2, dim3 = 1, fill = nofill] (11.5+0.75, 1.6) {};
\foreach \j in {0.15, 0.15*2, 0.15*3, 0.15*4, 0.15*5}{
\tensor[dim1 = 0, dim2 = 0, dim3 = 1, fill = nofill] (11.5+0.75+\j, 1.75) {};
\tensor[dim1 = 0.15, dim2 = 0, dim3 = 0, fill = nofill] (11.5+0.75+\j, 1.6) {};
}
\tensor[dim1 = 0, dim2 = 0.15, dim3 = 0.12, fill = black] (12.4, 1.9) {};
\tensor[dim1 = 0, dim2 = 0.15, dim3 = 0.12, fill = black] (12.4+0.15, 1.9) {};
\tensor[dim1 = 0, dim2 = 0.15, dim3 = 0.12, fill = black] (12.4+0.15*2, 1.9) {};
\tensor[dim1 = 0, dim2 = 0.15, dim3 = 0.12, fill = black] (12.4+0.15*2+0.09, 1.9-0.06) {};
\tensor[dim1 = 0, dim2 = 0.15, dim3 = 0.12, fill = black] (12.4+0.15*2+0.09*2, 1.9-0.06*2) {};
\node at (10.7, 1) {$+$};
\node at (4.55+3.35+3.5, -0.35) {$\T{A}_{:,3,:}$};
\node at (6+3.35+3.5, -0.5) {$\T{B}_{:,:,3}$};
\node at (6.5+3.35+3.5, 2.5) {$\T{C}_{3,:,:}$};
\end{tikzpicture}
\caption{Illustration of the generative spatiotemporal video model with two constant valued, different intensity, rectangular objects moving in different directions, on a constant background. Three BM-rank 1 terms are sufficient to capture the foreground objects' motion and the background}
\label{fig:genModel_rk3}
\end{figure}

\subsection{SS-SVD - BMD Connection for Our Generative Model}
\label{subsec:ssvd_bmd}
We are now in a position, using the generative model, to explain why the frontal slices of the surveillance video will have low (matrix) rank and connect our BMD-based model to the SS-SVD. 
Consider our generative model in which only a single pixel-size object is traversing from left to right in the $i$-th row of the background image $\M{X}$. This video tensor $\TX$ will have a BM-rank of two. Each frame is a {\it lateral} slice $\TX_{:,j,:}$. Then it is easy to see that the rank of each {\it frontal} slice $\TX_{:,:,k}$ is either 1 or 2, because $\TX_{:,j,k}$ is either $\M{X}_{:,k}$ (if no object is present at time $j$ on the $k$-th column of the background image) or 
$\M{X}_{:,k}+ (-\M{X}_{i,k}+\alpha) \V{e}_i$ (the single pixel object is present), where $\V{e}_i$ is the $i$-th standard basis vector. Thus, indeed, a constant multiple of the left singular vector $\V{u}_{1}^{(k)}$ will accurately approximate the background column pixel $\M{X}_{:,k}$.  However, by orthogonality and the fact that $c_k\V{u}_1^{k} \approx \M{X}_{:,k}$ for some constant $c_k \neq 0$, $\V{u}_2^{(k)}$ is approximately some multiple of $(-\M{x}_{i,k}+\alpha)\V{e}_i - \kappa_k \M{X}_{:,k}$ where $\kappa_k$ is also a non-zero constant. 
In sum, for $t =1 $, we approximate
$\TX^{\rm bg}$ in (\ref{eq:fgbg}) and the $t=2$ term approximates $\TX^{\rm fg}$ in (\ref{eq:fgbg}).  So both the foreground and the background have compressed representations in the BMD form. We will utilize this observation in our choice of starting guess for our algorithm.

\section{Low BM-rank Tensor Approximation}
\label{sec:lowBMrank}
Given a third-order video tensor $\TX \in \mathbb{R}^{m\times p\times n}$ with BM-rank $r$, our goal is to compute a decomposition with BM-rank $\ell$, $1\leq \ell \leq r$, which best approximates $\TX$ such that
\begin{equation}
    \min_{\Tilde{\TX}} \|\TX - \Tilde{\TX}\|_{F}^{2} \text{ with } \Tilde{\TX} = \sum_{t=1}^{\ell} \bmp \left(\TA_{:,t,:}, \TB_{:,:,t}, \TC_{t,:,:}\right).
    \label{eq:bmp_approx}
\end{equation}
As we will show in the next section, in fact the BMD is not unique, a point which was not addressed in \cite{luo2023multidimensional}.  Thus, our ultimate goal will be to replace this problem with a regularized one which imposes desirable features in the context of our application.  However, the regularized approach involves only straightforward modification to the unregularized subproblems below, so for the sake of readability, we present the unregularized version first.   

Since the BM-product is a ternary multiplication of the $m \times \ell \times n, m\times p \times \ell$ and $\ell \times p \times n$ factor tensors $\TA, \TB$, and $\TC$, respectively, finding a decomposition of $\TX$ in terms of the factor tensors could be done by treating this as a nonlinear least-squares problem. 
The Jacobian of the residual vector,  $\vvec(\TX - \bmp(\TA,\TB,\TC))$ will have $3\ell$ non-zero entries per row, $mpn$ rows (more, in the regularized case) and $\ell(mn\!+\!mp\!+\!np)$ columns. Computing each
search direction for a nonlinear least squares solver, therefore, would require a call to an iterative method for large tensors.

Keeping in mind issues of memory and data movement, and having a preference for parallelizability, we focus instead on deriving an alternating least-squares (ALS) algorithm to solve the BMD problem, which we call BMD-ALS.  We show the work involved per iteration can be decoupled into $mp$ small problems that can be solved independently of one another. The ALS algorithm has been widely used for computing the tensor CP decomposition \cite{carroll1970analysis, kolda2009tensor} and tensor block term decomposition \cite{de2008decompositions}. Despite the limitations of the algorithm such as the slow convergence, the dependency on the starting guesses, the swamp effect \cite{navasca2008swamp} and more \cite{li2013some, uschmajew2012local}, the simplicity of understanding and implementing the ALS algorithm with superior quality of results still marks it as today's ``workhorse'' algorithm for CP decomposition \cite{kolda2009tensor}. We will show in the following subsections that BMD-ALS is also straightforward to implement, with  relatively small, dense, independent subproblems that can be solved in parallel. Moreover, we will show that the slicewise SVDs serve as an excellent starting guess for BMD-ALS, particularly for our video application (see Sec.~\ref{sec:vid_application}), and for which we can compute the initial error easily. 

\subsection{Phase I - Starting Guess}
\label{subsec:phase1}
Two ways of obtaining starting guesses are outlined in Sec.~\ref{sec:BMDconnection}: we can either take a suitable low rank matrix approximation from the unfolded data, and refold accordingly to get $\TA^{0}, \TB^{0}, \TC^{0}$ as in Theorem.~\ref{thm:matrix2bmd}; or we can take a low-rank matrix approximation (with fixed allowable rank) to each frontal slice, and refold according to Theorem.~\ref{thm:slice2bmd} to get our starting guesses.  The first choice will allow us to directly improve upon the DMD method discussed in Sec.~\ref{subsec:dmd}. The 2nd choice will allow us a direct comparison to the SS-SVD approach given in Sec.~\ref{subsec:ss-svd}. 

\subsection{Alternating Least-Squares (ALS) Algorithm}
Given the data tensor $\T{T} \in \mathbb{R}^{m\times p \times n}$, and the initial factor tensor triplet $\T{A}^{0} \in \mathbb{R}^{m\times \ell \times n}$, $\T{B}^{0} \in \mathbb{R}^{m \times p \times \ell}$ and $\T{C}^{0} \in \mathbb{R}^{\ell \times p \times n}$ obtained from phase I (Sec.~\ref{subsec:phase1}), we solve the following ALS subproblems for iterations $k=0, 1, 2, \dots$
\begin{equation}\label{eq:als_tensor}
\begin{split}
     \T{B}^{k+1} &= \min_{\T{B}\in\mathbb{R}^{m\times p\times\ell}}\left\Vert \T{T}-\bmp\left(\T{A}^{k},\T{B},\T{C}^{k}\right)\right\Vert _{F}^{2},\\
    \left(\T{C}^{\top}\right)^{k+1} &= \min_{\T{C}^{\top} \in\mathbb{R}^{ p\times n \times \ell}}\left\Vert \T{T}^{\top}-\bmp\left(\left(\T{B}^{\top}\right)^{k+1},\T{C}^{\top},\left(\T{A}^{\top}\right)^{k}\right)\right\Vert _{F}^{2},\\
     \left(\T{A}^{\top^{2}}\right)^{k+1} &= \min_{\T{A}^{\top^2}\in\mathbb{R}^{n\times m\times\ell}}\left\Vert \T{T}^{\top^{2}}-\bmp\left(\left(\T{C}^{\top^{2}}\right)^{k+1},\T{A}^{\top^{2}},\left(\T{B}^{\top^{2}}\right)^{k+1}\right)\right\Vert _{F}^{2}.
\end{split}
\end{equation}
In each ALS subproblem, we are holding the first and the third factor tensors fixed and solving for the middle tensor. Taking the first subproblem in the ALS algorithm as an example, we show that the problem of updating $\T{B}$ reduces to a linear least-squares problem as discussed in Phase II below (Sec.~\ref{subsec:phase2}), and updating $\T{C}$ and $\T{A}$ factor tensors can be solved similarly using the same algorithm. 

\subsection{Phase II - Linear Least-Squares Problem}
\label{subsec:phase2} 
Taking the first subproblem for updating tensor $\T{B}\in \mathbb{R}^{m\times p\times \ell}$ given in the ALS algorithm (Eq.~\ref{eq:als_tensor}) as an example, we will show in this section that this tensor least-squares problem can be reduced to a linear least-squares problem. Furthermore, the problem can be decoupled into many ($mn$ for updating $\T{B}$, $np$ for updating $\T{C}$, and $mn$ for updating $\T{A}$) smaller sized independent subproblems making the computation massively parallelizable.

Holding the pair $\TA$ and $\TC$ fixed, we optimize for $\TB$ by solving
\begin{equation}\label{eq:bmp_middle}
    \widehat{\T{B}} = \min_{\TB \in \mathbb{R}^{m\times p\times \ell }}\|\T{T} - \bmp( \TA , \TB, \TC ) \|_F^2.
\end{equation}
We will show in the following derivations that the tensor least-squares problem given in Eq. (\ref{eq:bmp_middle}) can be equivalently written as a matrix least-squares problem.

By the definitions of the Frobenius norm and tensor BM-product, we have
\begin{equation*}
\left\Vert \T{T}-\bmp\left(\T{A},\T{B},\T{C}\right)\right\Vert _{F}^{2}=\sum_{i=1}^{m}\sum_{j=1}^{p}\sum_{k=1}^{n}\left|\T{T}_{i,j,k}-\sum_{t=1}^{\ell}\T{A}_{i,t,k}\T{B}_{i,j,t}\T{C}_{t,j,k}\right|^{2}.
\end{equation*}
Holding the indices $i$ and $j$ fixed, we have
\begin{equation*}
\begin{split}
    &\sum_{(i-1)p+j=1}^{mp}\sum_{k=1}^{n}\left|\T{T}_{i,j,k}-\sum_{t=1}^{\ell}\T{A}_{i,t,k}\T{C}_{t,j,k}\T{B}_{i,j,t}\right|^{2}\\
    =&\sum_{(i-1)p+j=1}^{mp} \sum_{k=1}^{n} \left|\V{y}_{\T{T}}^{\left(i,j\right)}(k,1)-\sum_{t=1}^{\ell}\M{H}^{(i,j)}_{\T{A},\T{C}}(k,t)\V{b}_{t}^{\left(i,j\right)}\right|^2
  =  \sum_{(i-1)p+j=1}^{mp} \left \|\V{y}_{\T{T}}^{\left(i,j\right)}- \M{H}^{(i,j)}_{\T{A},\T{C}} \V{b}^{\left(i,j\right)} \right \|_F^2,
\end{split} 
\end{equation*}
where $\V{y}_{\T{T}}^{(i,j)} = \squeeze(\T{T}_{i,j,:}) \in \mathbb{R}^{n\times 1}$, $\V{b}^{(i,j)} = \squeeze(\T{B}_{i,j,:}) \in \mathbb{R}^{\ell \times 1}$, and $\M{H}_{\T{A},\T{C}}^{(i,j)}\in \mathbb{R}^{n\times \ell}$ with the $(k,t)$-th entry specified as $\M{H}^{(i,j)}_{\T{A},\T{C}}(k,t) = \T{A}_{i,t,k}\TC_{t,j,k}$. 

Let us stack the vectors $\V{y}_{\T{T}}^{(i,j)}$ and $\V{b}^{(i,j)}$ as
\begin{equation*}
\V{y}_{\T{T}} := \left[\begin{array}{c}
\V{y}_{\T{T}}^{(1,1)}\\
\vdots\\
\V{y}_{\T{T}}^{(i,j)}\\
\vdots\\
\V{y}_{\T{T}}^{(m,p)}
\end{array}\right]=\tvec(\T{T})\in \mathbb{R}^{mpn\times 1} \text{, and }
\V{b} := \left[\begin{array}{c}
\V{b}^{(1,1)}\\
\vdots\\
\V{b}^{(i,j)}\\
\vdots\\
\V{b}^{(m,p)}
\end{array}\right] = \tvec(\TB) \in \mathbb{R}^{mp\ell \times 1}.
\end{equation*}
Moreover, we take direct-sums of $\M{H}^{(i,j)}_{\T{A},\T{C}}$ and define
$\M{H}_{\T{A},\T{C}} := \underset{i,j}{\oplus}\M{H}^{(i,j)}_{\T{A},\T{C}} =\mat(\TA,\TC)$. Then the tensor least-squares problem given in Eq. (\ref{eq:bmp_middle}) reduces to the following matrix least-squares problem
\begin{equation} \label{eq:updateB}
    \widehat{\V{b}} = \min_{\V{b}\in \mathbb{R}^{mn\ell\times 1}} \left\|\V{y}_{\T{T}} - \M{H}_{\T{A},\T{C}}\V{b} \right\|_F^2.
\end{equation}
Furthermore, Eq.~(\ref{eq:updateB}) decouples into $mp$ least-squares subproblems of size $n \times \ell$ such that
\begin{equation} \label{eq:decoupledB}
    \widehat{\Vb}^{(i,j)} =  \min_{\Vb^{(i,j)} \in \mathbb{R}^{\ell \times 1}} \left\|\V{y}_{\T{T}}^{(i,j)} - \M{H}_{\T{A},\T{C}}^{(i,j)}\Vb^{(i,j)} \right\|_F^2.
\end{equation}
We adopt the same flattening scheme described above to obtain the following equivalent linear least-squares subproblems for updating the factor $\T{A}$:
\begin{equation}\label{eq:updateC}
    \widehat{\V{C}} = \min_{\V{C} \in\mathbb{R}^{pn\ell \times 1}}\left\Vert \V{y}_{\T{T}^{\top}}-\M{H}_{\T{B},\T{A}}\V{c}\right\Vert _{F}^{2},
\end{equation}
where $\V{c} = \tvec\left(\T{C}^{\top}\right)$, $\V{y}_{\T{T}^{\top}} = \tvec\left(\T{T}^{\top}\right)$, and $\M{H}_{\T{B},\T{A}} = \mat\left(\T{B}^{\top},\T{A}^{\top}\right)$, and the decoupled problem is given by
\begin{equation} \label{eq:decoupledC}
    \widehat{\Vc}^{(j,k)} =  \min_{\Vc^{(j,k)} \in \mathbb{R}^{\ell \times 1}} \left\|\V{y}_{\T{T}^{\top}}^{(j,k)} - \M{H}_{\T{B}^{\top},\T{A}^{\top}}^{(j,k)}\Vc^{(j,k)} \right\|_F^2.
\end{equation}
We update $\T{C}$ by solving:
\begin{equation}\label{eq:updateA}
     \widehat{\V{A}} = \min_{\V{A}\in\mathbb{R}^{n m\ell\times1}}\left\Vert \V{y}_{\T{T}^{\top^{2}}}-\M{H}_{\T{C},\T{B}}\V{A}\right\Vert _{F}^{2},
\end{equation}
where $\V{A} = \tvec\left(\T{A}^{\top^2}\right)$, $\V{y}_{\T{T}^{\top^2}} = \tvec\left(\T{T}^{\top^2}\right) $, and $\M{H}_{\T{C},\T{B}} = \mat\left(\T{C}^{\top^2},\T{B}^{\top^2}\right)$, and the decoupled problem is given by
\begin{equation}\label{eq:decoupledA}
     \widehat{\V{A}}^{(k,i)} = \min_{\V{A}^{(k,i)}\in\mathbb{R}^{n m\ell\times1}}\left\Vert \V{y}^{(k,i)}_{\T{T}^{\top^{2}}}-\M{H}^{(k,i)}_{\T{C},\T{B}}\V{A}\right\Vert _{F}^{2}.
\end{equation}

The implementation of the ALS algorithm for computing tensor BMD is illustrated in Algorithm.~(\ref{alg:bmp-als}). The termination criteria for the algorithm is chosen to be either reaching the maximum number of iterations $N$ or the relative change in successive iterations becomes sufficiently small, i.e. $\|\T{T}^{n+1} - \T{T}^{n}\|_F/\|\T{T}^{n}\|_F < \epsilon$ for some tolerance parameter $\epsilon>0$. A complete iterate requires to compute all three-factor tensors $\TA$, $\TB$, and $\TC$.

\begin{algorithm}
\caption{Tensor BMD-ALS}\label{alg:bmp-als}
\begin{algorithmic}[1]
\Procedure{$[\T{A}^{N}, \T{B}^{N}, \T{C}^{N}]$=BMD-ALS}{$\T{T}, \T{A}^0, \T{B}^0, \T{C}^0, N, \epsilon$}
\For{$n=0,1,2,\dots, N$}

    \smallskip
    \State Update the $(i,j)$-th tube fiber $\TB^{n+1}_{i,j,:}$ by solving Eq.~\ref{eq:decoupledB}.
    
    \smallskip
    
    \State Update the $(j,k)$-th tube fiber $(\TC^{\top})^{n+1}_{j,k,:}$ by solving Eq.~\ref{eq:decoupledC}.
    
    \smallskip
    
    \State Update the $(k,i)$-th tube fiber $(\TA^{\top^2})^{n+1}_{k,i,:}$ by solving Eq.~(\ref{eq:decoupledA}).
    
    \smallskip
    
    \State $\T{T}^{n+1} = \bmp(\T{A}^{n+1}, \T{B}^{n+1}, \T{C}^{n+1})$
\EndFor
\If{$\|\T{T}^{n+1} - \T{T}^{n}\|_F/\|\T{T}^{n}\|_F < \epsilon$}
    \State $n \gets N$
\EndIf
\EndProcedure
\end{algorithmic}
\end{algorithm}

\paragraph{Remark} The cost of implementing SVD for a matrix of size $m \times n$ with $m \geq n$ is $\mathcal{O}(mn^2+n^2)$ flops.
Moreover, the cost of solving a least-squares problem with the matrix of size $m \times n$, $m \geq n$, has the same time complexity as computing the SVD of the matrix \cite{golub2013matrix}. In phase I, the first choice of obtaining a starting guess is to compute a DMD of the vectorized video matrix. The total cost for DMD is dominated by taking SVD of the data matrix of size $mn\times (p-1)$ where $mn\geq (p-1)$ and solving a least-squares problem with the coefficient matrix of size $mn\times \ell$. Hence, the total cost is $\mathcal{O}\left((mn+1)((p-1)^2+\ell^2)\right)$ for the DMD choice. For the second choice, the total cost is
$\mathcal{O}\left(n(m+1)p^2\right)$ for computing the $n$ frontal slice matrix SVDs, assuming $m\geq p$. In phase II, the total cost of solving the $mp$ smaller least-squares problem is $\mathcal{O}\left(mp(n+1)\ell^2 \right)$. The least-squares solutions are obtained using e.g., {\sc{Matlab}}'s {\tt{lsqminnorm}} function for the minimum norm solution. 

We also emphasize that both phase I (with the slice-wise SVD choice) and phase II computations can be done in parallel since we can compute matrix SVDs simultaneously on the individual frontal slices of the input tensor, and the second phase of the algorithm is also parallelizable since the $mp$ smaller least-squares problems can be solved independently and concurrently. So parallel computation methods can potentially improve the execution time significantly, though further discussion is beyond the scope of this study and hence will not be discussed further in the current work. 

We note that the ALS algorithm given in Alg.~(\ref{alg:bmp-als}) for computing low BM-rank tensor approximation is also closely connected to the nonlinear GS method. Hence, several convergence results follow directly from its framework. By Proposition 2.1 in \cite{yang2022global}, let $\Vv^{k}$ denote the $k$-th solution vector, i.e. $\Vv^{k}=\left(\Va^{k}, \Vb^{k}, \Vc^{k}\right)$. When the normal equation matrix in each of the linear least-squares subproblems (Eq.~\ref{eq:updateB} -- Eq.~\ref{eq:updateA}) is positive definite, i.e. $\M{H}^{\top}\M{H}\succ 0$ for any $\M{H}\in\{\M{H}_{\TC,\TB}, \M{H}_{\TA,\TC}, \M{H}_{\TB,\TA}\}$, then each least-squares subproblem has a unique solution and the sequence $\{\Vv^k\}_{k\in \mathbb{N}}$ generated by ALS converges to a limit point. Moreover, by Theorem 3.1 in \cite{yang2022global}, if $\{\V{v}^{k}\}_{k\in \mathbb{N}}$ is bounded, then the limit point of the sequence is a stationary point of the problem. Detailed discussions on the ALS convergence analysis is referred to the Supplement. 

As discussed in \cite{li2013some, navasca2008swamp}, there is no guarantee for the ALS algorithm in CP-decomposition to converge to a stationary point when the coefficient matrices in the subproblems are rank-deficient, in which case, the objective function is not strictly quasiconvex. This is also the case for the ALS subproblems for third-order tensor BM-decomposition. The coefficient matrices given may not have full column rank. As a result, each of the least-squares subproblems will not be strictly quasiconvex, and hence the objective function $f(\Vv)$ is not strictly quasiconvex with respect to each component of $\V{v}$. Therefore, similar to the CP-decomposition, the BMD-ALS algorithm may produce a sequence with limit points that are not critical points to the original problem resulting in slow convergence. The remedy for the convergence issue of the ALS algorithm for CP decomposition has been discussed in several studies \cite{grippo2000convergence,li2013some,navasca2008swamp, yang2022global}. 
In the next section, we address the BMD-ALS convergence issue by adding regularization.

\section{Regularized ALS}\label{sec:regALS}
Grouping the slicewise singular values with the corresponding left singular vectors as in Theorem \ref{th:slicewise} was arbitrarily made in order to preserve the scaling constants of the left-singular vectors.  This illustrates a  nonuniqueness property of the BMD.  To see this more generally, let $\MA$ be $m \times n$, $\MB$ be $m \times p$, and $\M{C}$ be $p \times n$. Then define two tensor triplets $(\TA,\TB,\TC), (\Tilde{\TA},\Tilde{\TB},\Tilde{\TC}) \in \mathbb{R}^{m\times 1\times n} \times \mathbb{R}^{m\times p\times 1}\times \mathbb{R}^{1\times p\times n}$ such that
\begin{equation*}
    \begin{split}
    \TA_{:,1,:} = \MA, \qquad \TB_{:,:,1}&=\MB, \qquad \TC_{1,:,:} = \M{C}, \\
    \Tilde{\TA}_{:,1,:} = \MD_1 \MA, \qquad  \Tilde{\MB}_{:,:,1} =& \MD_1^{-1} \MB \MD_2^{-1},
    \qquad \Tilde{\TC}_{1,:,:} = \MD_2 \M{C}, 
    \end{split}
\end{equation*}
where $\MD_1$ and $\MD_2$ are invertible $m \times m$ and $p \times p$ diagonal matrices respectively. Then
\begin{equation*}
    \bmp(\TA,\TB,\TC) = \bmp(\Tilde{\TA},\Tilde{\TB},\Tilde{\TC}),
\end{equation*}
showing that the factors are non-unique.

To this end, we might wish to add regularization. Recall our generative model suggests that one of the lateral slices of $\TA$ should give information about the background scene, and the subsequent SS-SVD analysis suggests that it should correspond to the largest singular value. The $\TB$ and $\TC$ tensors each have information about one spatial dimension and time.  So, the simpliest regularization model would be
\begin{equation}\label{eq:reg-objF}
    \min_{\T{A},\T{B},\T{C}} \left\Vert \T{T} - \bmp\left(\T{A},\T{B},\T{C}\right) \right\Vert_{F}^{2} + \frac{1}{2}\left(\Vert \M{L}_a(\T{A}) \Vert_{F}^{2} + \Vert \M{L}_b(\T{B}) \Vert_{F}^{2} +\Vert \M{L}_c(\T{C}) \Vert_{F}^{2}\right)
\end{equation}
where $\M{L}_a, \M{L}_b$, and $\M{L}_c$ are linear operators acting on the vectorized factor tensors $\T{A},\T{B}$, and $\T{C}$ respectively. More specifically, for all $1\leq i \leq m, 1\leq j \leq p, 1\leq k \leq n$, $\M{L}_{a}=\underset{k,i}{\oplus} \M{L}$  where $\M{L} = \diag([\lambda_1, \lambda_2, \dots, \lambda_{\ell}])$, $\M{L}_{b} = \underset{i,j}{\oplus} \beta \M{I}_{\ell\times \ell}$ and $\M{L}_{c} = \underset{j,k}{\oplus} \gamma\M{I}_{\ell\times \ell}$.  Here, we set $\lambda_1$ to be at least an order of magnitude smaller than $\lambda_2$.  This is consistent with the fact that the dominant feature is the background (requiring the least penalty).  

Importantly, this choice keeps the subproblems decoupled:  the only change required is to add a constraint term to each of Eq.~(\ref{eq:updateB}), Eq.~(\ref{eq:updateC}), and Eq.~(\ref{eq:updateA}) respectively, in the form of either $ \kappa \| \V{v} \|_2^2$ for $\V{v} = \V{b}^{(i,j)}$ or $\V{v} = \V{c}^{(i,j)}$ (Eq.~\ref{eq:decoupledB}, Eq.~\ref{eq:decoupledC} respectively) or $0.5\| \M{L} \V{a}^{(k,i)} \|_2$ (to Eq.~\ref{eq:decoupledA}). 

Let $\Psi(\boldsymbol{\omega})$ denote the objective function given in Eq.~(\ref{eq:reg-objF}) 
where $\boldsymbol{\omega}$ is the solution vector, i.e. $\boldsymbol{\omega} = (\V{a}, \V{b}, \V{c})$. Then the sequence $\{\boldsymbol{\omega}^{k}\}_{k\in \mathbb{N}}$ generated by the regularized alternating least-squares algorithm converges to a critical point of $\Psi$.  We provide the proof in the Supplement.

\section{Numerical Results}
\label{sec:numerical}

In this section, we illustrate the performance of (grayscale) video background and foreground separation with the following methods (1) SS-SVD (2) regularized BMD-ALS with spatiotemporal slice-wise SVD initial guess ($\text{BMD-ALS}_{SVD}$) (3) Dynamic mode decomposition (DMD) \cite{grosek2014dynamic} (4) regularized BMD-ALS with DMD initial guess ($\text{BMD-ALS}_{DMD}$) on six video datasets:
\begin{enumerate}
    \item First is the ``Simulated Video'' based on the BM-rank 3 generative spatiotemporal video model discussed in Sec.~\ref{sec:generative}. We use a $50\times 50$ cloud image \cite{miguel2021cloud} and simulate 2 rectangular objects intensities 85 and 15 respectively and are moving over 30 time steps across the background. 

    \item The second, ``Car'' video is from a surveillance video of moving vehicles on a highway. The video data is available in {\sc Matlab} and can be loaded using the command \verb+VideoReader('traffic.mj2')+. This video consists of 120 grayscale frames each of size $120 \times 160$. The camera is pointing at an entrance of a highway where the cars are traveling from the top right corner to the bottom of the image as frames progress.

    \item The third video, referred to as the ``Escalator'' video, is taken from a surveillance video of escalators and people. The data was originally used in \cite{zhang2014novel} which is now available in the author's github repository \cite{ZhangGit2014}. This video consists of 200 frames in grayscale and each frame is of size $130 \times 160$. The camera is pointing directly from above at three parallel escalators. The staircase of the escalators is moving periodically while people either walk across the platforms above the escalator, stand on the escalators or walk down the escalator on the right. 

    \item The last three videos are from the the Scene Background Initialization (SBI) dataset \cite{bouwmans2017scene, maddalena2015towards}. The three videos include ``Hall and Monitor'', ``Human Body'', and ``IBM Test''. The video frames are re-scaled to sizes $100 \times 147$, $100 \times 134$, and $100 \times 134$ respectively. For videos ``Hall and Monitor'' and ``Human Body'', the first 150 frames are included for the following experiments. The total number of frames of the ``IBM Test'' video is 90.  
\end{enumerate}
Specific (grayscale) video frames of each of the six video datasets are displayed in Fig.~(\ref{fig:tst_vid}). In Sec.~\ref{sec:colorBMD}, we will demonstrate a way to extend the third-order tensor BMD to fourth-order color video tensors and show the background/foreground separation results for the color videos available including the ``Simulated Video'' and the SBI video data tensors.
\begin{figure}[ht]
\centering
\begin{tikzpicture}
\node at (0,0) {\includegraphics[width=0.98\linewidth]{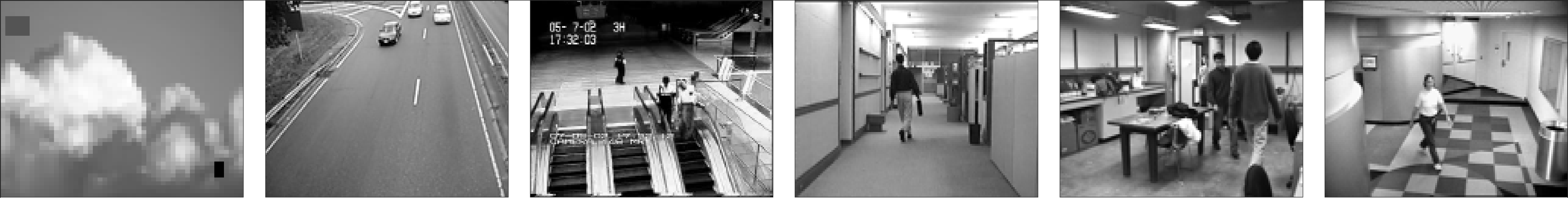}};
\node at (-6.5, 1.2) {\footnotesize Simulated Video};
\node at (-3.8, 1.2) {\footnotesize Car};
\node at (-1.3, 1.2) {\footnotesize Escalator};
\node at (1.25, 1.2) {\footnotesize Hall and Monitor};
\node at (3.85, 1.2) {\footnotesize Human Body};
\node at (6.35, 1.2) {\footnotesize IBM Test};
\node at (-6.5, -1.2) {\footnotesize Frame 1};
\node at (-3.8, -1.2) {\footnotesize Frame 60};
\node at (-1.3, -1.2) {\footnotesize Frame 60};
\node at (1.25, -1.2) {\footnotesize Frame 60};
\node at (3.85, -1.2) {\footnotesize Frame 60};
\node at (6.35, -1.2) {\footnotesize Frame 60};
\end{tikzpicture}
\caption{Video frames selected to display from the six testing videos.}
\label{fig:tst_vid}
\end{figure}

The choices of these videos are based on the following considerations. The first video is a simulated data tensor which we use to check the generative video model results discussed in Sec.~\ref{sec:generative}. The second and the third video tensors differ by the level of the background image complexities. While the highway road is a simpler image, the escalator scene represents a more complex background image which a higher tensor BM-rank is expected to be used in order to achieve a similar level of reconstruction accuracy as the ``Car'' video. The last three videos taken from the SBI datasets contain foreground objects of different sizes comparing to the size of the image. 

For all numerical experiments in the third-order grayscale video decomposition case, we found that following regularization parameters in the alternating least-squares algorithm provide sufficiently good reconstruction results. For penalizing the $\TA$ factor tensor, we set $\lambda_1=0.01$ and $\lambda_t=1, \forall\,2\leq t\leq \ell$, where $\M{L}_a=\underset{k,i}{\oplus}\M{L}$ with $\M{L}=\diag([\lambda_1,\lambda_2,\dots,\lambda_{\ell}])$, $1\leq k\leq p$ and $1\leq i\leq m$. We found in experiments that these regularization parameters are robust as long as the first parameter $\lambda_1$ is approximately 100 times smaller than other parameters $\lambda_2, \dots, \lambda_{\ell}$. For penalizing the $\T{B}$ and $\T{C}$ factor tensors, we set $\beta=\gamma=1$ so that $\M{L}_{b} = \underset{i,j}{\oplus} \M{I}_{\ell \times \ell}$ and $\M{L}_{c} = \underset{j,k}{\oplus}\M{I}_{\ell \times \ell}$, for all $1\leq i\leq m$, $1\leq j\leq n$, and $1\leq k\leq p$. 

Moreover, we set the termination criteria for the alternating least-squares algorithm to be either when a total number of 100 iterations is reached and or the relative error of consecutive iterates, i.e. $\|\TX^{k+1} - \TX^{k}\|_{F}/\|\TX^{k}\|_{F}$, is less than $10^{-5}$. For the following numerical experiments, all video frames displayed are re-scaled to the pixel range of $[0, 255]$ on grayscale. For both of the DMD method and the $\text{BMD-ALS}_{DMD}$ method, since the resulting dynamic modes may be complex, we followed the code included in \cite{kutz2016dynamic} and the real parts of the solutions were taken for the video reconstructions. 

In Fig.~(\ref{fig:rank_RE}), we compare the relative error (RE\footnote{Relative Error: $\text{RE} = \frac{\|\TX - \hat{\TX}\|_{F}}{\|\TX\|_{F}}$.}) of video reconstruction results of the BMD-ALS algorithm with both the SS-SVD and the DMD initial guesses for different BM-rank $\ell$, $\ell=2,\dots,10$. The BM-decomposition is computed on the entire video tensor. As we can see in Fig.~(\ref{fig:rank_RE}), The BMD-ALS algorithm with both starting guesses performed similarly well for different BM-ranks on real video tensors. As for the simulated video constructed with three BM-rank 1 terms, a BM-rank 2 approximating by the $\text{BMD-ALS}_{SVD}$ algorithm outperforms the $\text{BMD-ALS}_{DMD}$ algorithm by a factor of 10. Additionally, we also observe that the approximation relative error for the simulated video is small even when $\ell$ is larger than the number of BM-rank 1 terms of the video. Overall, as we can see in the plot that, as $\ell$ increases, the amount of the improvement of the approximation quality decreases since the differences between RE become smaller for larger $\ell$. This result suggests that surveillance type videos inherently have a low BM-rank. Based on Fig.~(\ref{fig:rank_RE}), we determined the target BM-ranks for the real videos having a similar level of reconstruction accuracy with RE approximately $0.04$.  In Table.~\ref{tbl:BMD_results}, we summarize the target BM-ranks for each video as well as the compression ratio (CR\footnote{Compression Ratio: $\text{CR} = \displaystyle{\frac{\text{Uncompressed size}}{\text{Compressed size}}}$.}) for the BMD method.

Next we illustrate in detail the quality of separating the background and foreground using the BM-decomposition method, and compare with the methods of the SS-SVD and DMD. We note that for the DMD method, video sequences are usually processed on segments. Choosing a proper video segment size can reduce the processing time and potentially improve the video reconstruction accuracy. For the real-video streams, the sequences are broken into segments of $30$ frames, and the rank truncation parameter is taken as $\ell=30-1=29$ as suggested in \cite{kutz2016dynamic}. For the simulated video, we found that choosing a segment size of 10 with rank $\ell=9$ yields a much better result. All other methods including SS-SVD, $\text{BMD-ALS}_{SVD}$, and $\text{BMD-ALS}_{DMD}$ are applied to the entire video sequence. 

We also note that both the SS-SVD and the DMD methods are algorithms for the background initialization application in computer vision \cite{garcia2020background}. These methods aim to model a clear background image without foreground objects from a video sequence. Obtaining the foreground objects are usually done by subtracting the background image from the overall video sequence. For the SS-SVD method, we followed the suggestions given in \cite{kajo2018svd} and selected the first rank-1 component from the slice-wise SVDs for the video stationary background sequence reconstruction. However, it is also possible to include more components to improve the reconstructed background quality as the authors discussed in \cite{kajo2018svd}. Additionally, the first image frame of the sequence is used to represent the approximated background, since the stationary background image sequence consists of the same image repeated $p$ times. As for the DMD method, we take $t_1=0$ in Eq.~(\ref{eq:dmd_bg}) to represent the background image. Detailed derivations and discussions for the DMD method are referred to the original article \cite{kutz2016dynamic}.

For our method, we model both the background image sequence and the foreground objects' motion using a few BM-rank 1 terms. In particular, the background video sequence is reconstructed by ${\T{X}}^{\text{bg}}_{\text{BMD}} = \bmp\left(\T{A}_{:,1,:}, \T{B}_{:,:,1}, \T{C}_{1,:,:}\right)$, and the foreground video sequence is reconstructed by $\displaystyle{\T{X}^{\text{fg}}_{\text{BMD}} =\sum_{t=2}^{\ell} \bmp\left(\T{A}_{:,t,:}, \T{B}_{:,:,t}, \T{C}_{t,:,:}\right)}$, where $\ell$ is the target BM-rank.

In Table.~\ref{tbl:bk_evals}, we summarize the video background reconstruction comparisons of the four methods: SS-SVD, $\text{BMD-ALS}_{SVD}$, DMD, and $\text{BMD-ALS}_{DMD}$, for the simulated video, and the three SBI video sequences where the ground-truth background images are available. The following metrics for the background evaluations were measured: Average Gray-level Error (AGE), Percentage of Error Pixels (pEPs), Percentage of Clustered Error Pixels (pCEPs), Peak-Signal-to-Noise-Ratio (PSNR), and Multi-Scale Structural Similarity Index (MS-SSIM). For the metics AGE, pEPs, and pCEPs, the lower the values, the better is the background estimate. For the PSNR and MS-SSIM values, the higher the values, the better is the background estimate. Detailed descriptions of the these metrics and \textsc{Matlab} implementations are available in \cite{SBIdataset2016}. For the SS-SVD and the DMD methods, the first frame of the reconstructed background sequence are compared with the ground-truth background images. For the BMD method with both SS-SVD and DMD initial guesses, the metrics are measured against every frame in the reconstructed background video sequence, and the average values over all frames are taken and summarized in the table. As we can see from Table.~(\ref{tbl:bk_evals}), the $\text{DMD-ALS}_{SVD}$ method performs well for the ``Simulated'', the ``Hall and Monitor'', and the ``Human Body'' video sequences. 

\begin{figure}[ht]
\centering
\begin{tikzpicture}[every text node part/.style={align=center}]
\node at (0,0) {\includegraphics[width=0.9\linewidth]{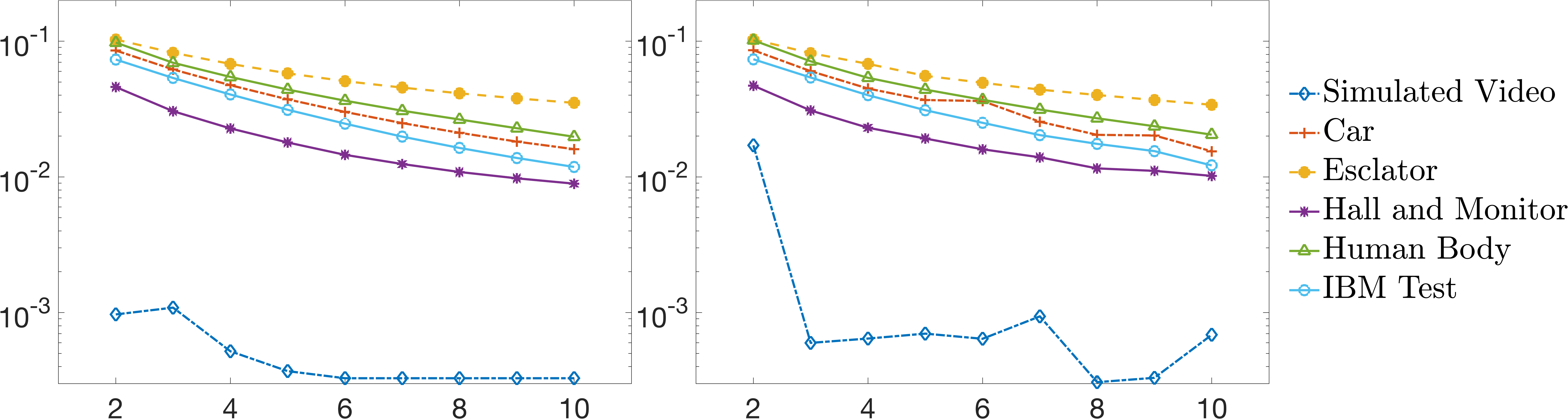}};
\node at (-4, 2.2) {\footnotesize $\text{BMD-ALS}_{SVD}$};
\node at (2, 2.2){\footnotesize $\text{BMD-ALS}_{DMD}$};
\node[rotate=90] at (-7.3, 0) {\footnotesize RE};
\node at (-4, -2.2) {\footnotesize BM-rank $\ell$};
\node at (2, -2.2){\footnotesize BM-rank $\ell$};
\end{tikzpicture}
\caption{Comparison of the $\text{BMD-ALS}_{SVD}$ and the $\text{BMD-ALS}_{DMD}$ methods with change in BM-rank for the six testing videos.}
\label{fig:rank_RE}
\end{figure}

In Fig.~(\ref{fig:sim_result1}), the first video background and foreground frames are displayed for the simulated video. From left to right are respectively: the ground--truth frames, and frames reconstructed by the four methods: SS-SVD, DMD, $\text{BMD-ALS}_{SVD}$, and $\text{BMD-ALS}_{DMD}$. As we can see from the images, the BMD-ALS method with both initial guesses obtained great separation results of the stationary background and the moving foreground objects compared to the ground-truth frames. In particular, the $\text{BMD-ALS}_{SVD}$ results are almost identical to the ground-truth frames. 

Furthermore, as discussed in Sec.~\ref{sec:generative}, for the simulated video with two objects of different intensities and are traveling with different trajectories, the first BM-rank 1 term of the generative video model consists of the background image captured by $\TA_{:,1,:}$. The foreground component, denoted $\T{X}^{\text{fg}}=\bmp(\TA_{:,2,:},\TB_{:,:,2},\TC_{2,:,:})+\bmp(\TA_{:,3,:},\TB_{:,:,3},\TC_{3,:,:})$, consists of the vertical locations of the objects captured by columns of the slice $\TB_{:,:,2}$ for the first object, and $\TB_{:,:,3}$ for the second object respectively. The horizontal positions of the two objects are modeled by rows of the slices $\squeeze(\TC_{2,:,:})$ and $\squeeze(\TC_{3,:,:})$. In Fig.~(\ref{fig:sim_svd2}.a), we demonstrated the ground-truth factor tensor slices of the BM-rank 3 generative video model in the first row. Additionally, the BM-rank 2 decomposition of the simulated video by both the $\text{BMD-ALS}_{SVD}$, and the $\text{BMD-ALS}_{DMD}$ methods are shown in the second and the third rows of Fig.~(\ref{fig:sim_svd2}.a) respectively. We note that in the tensor $\text{BMD-ALS}_{SVD}$ results, the background slice $\TA_{:,1,:}$ have inverted pixel intensities compared to the ground-truth image and hence we multiplied $\TA$ by $-1$ for display purposes. To make the overall BM-product positive, the factor tensor $\TC$ is multiplied by $-1$. 

Overall, as we can see in Fig.~(\ref{fig:sim_svd2}.a), the BMD-ALS results with both initial guesses recover the stationary background image captured in $\TA_{:,1,:}$. However, regarding the foreground motion, the decomposition with the DMD initial guess is less accurate for obtaining information depicting the trajectory of the motion comparing with using the SS-SVD initial guess. Moreover, the object trajectories in a BM-rank 2 decomposition using the $\text{BMD-ALS}_{SVD}$ method is well aligned with the ground-truth that was constructed by a sum of three BM-rank 1 terms. This observation suggests that this simulated video could potentially be modeled exactly by a BM-rank 2 model, but further investigations are needed. Finally, the ALS convergence plots are shown in Fig.~(\ref{fig:sim_svd2}.b). As we can see in the plots, the $\text{BMD-ALS}_{SVD}$ algorithm outperforms the $\text{BMD-ALS}_{DMD}$ algorithm with a much smaller relative error at convergence. 

\begin{figure}[ht]
\centering
\begin{tikzpicture}[every text node part/.style={align=center}]
\node at (0,0) {\includegraphics[width=0.85\linewidth]{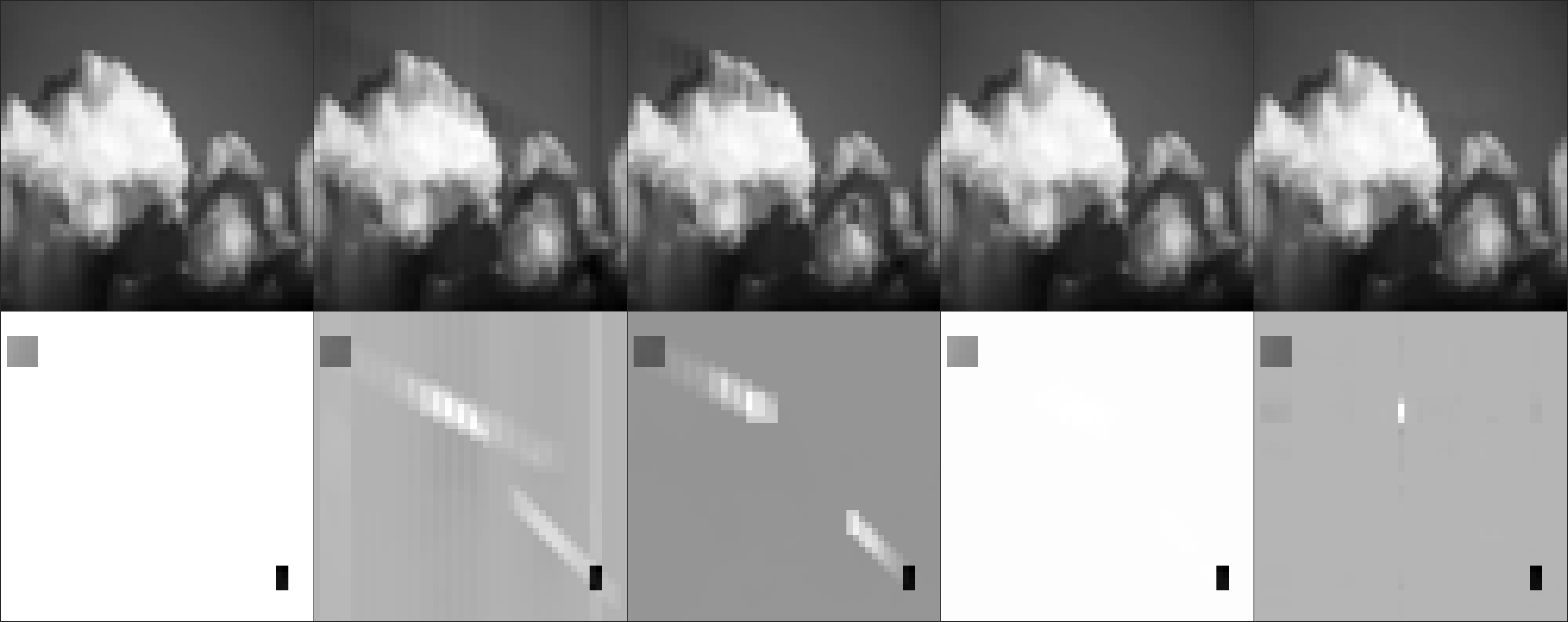}};
\node[rotate=90] at (-6.9, 1.3) {\scriptsize Background};
\node[rotate=90] at (-6.9, -1.3){\scriptsize Foreground};
\node at (-5.2, 3-0.1) {\scriptsize Ground Truth};
\node at (-5.2+2.6, 3-0.1) {\scriptsize SS-SVD};
\node at (-5.2+2.6*2, 3-0.1) {\scriptsize DMD};
\node at (-5.2+2.6*3, 3-0.1) {\scriptsize $\text{BMD-ALS}_{SVD}$};
\node at (-5.2+2.6*4, 3-0.1) {\scriptsize $\text{BMD-ALS}_{DMD}$};
\end{tikzpicture}
\caption{Video background/foreground separation results to the simulated BM-rank 3 video with $\ell=2$ 
approximations. Comparison of the ground-truth, the SS-SVD method, the BMD-ALS method with slice-wise SVD initial guess ($\text{BMD-ALS}_{SVD}$), the DMD algorithm, and the BMD-ALS method with the DMD initial guess ($\text{BMD-ALS}_{DMD}$). The first frame is selected to display.}
\label{fig:sim_result1}
\end{figure}
\begin{figure}[ht]
\centering
\begin{tikzpicture}[every text node part/.style={align=center}]
\node at (-7.85, 4.6) {\footnotesize $(a)$};
\node at (-4.5, 0) {\includegraphics[width=0.38\linewidth]{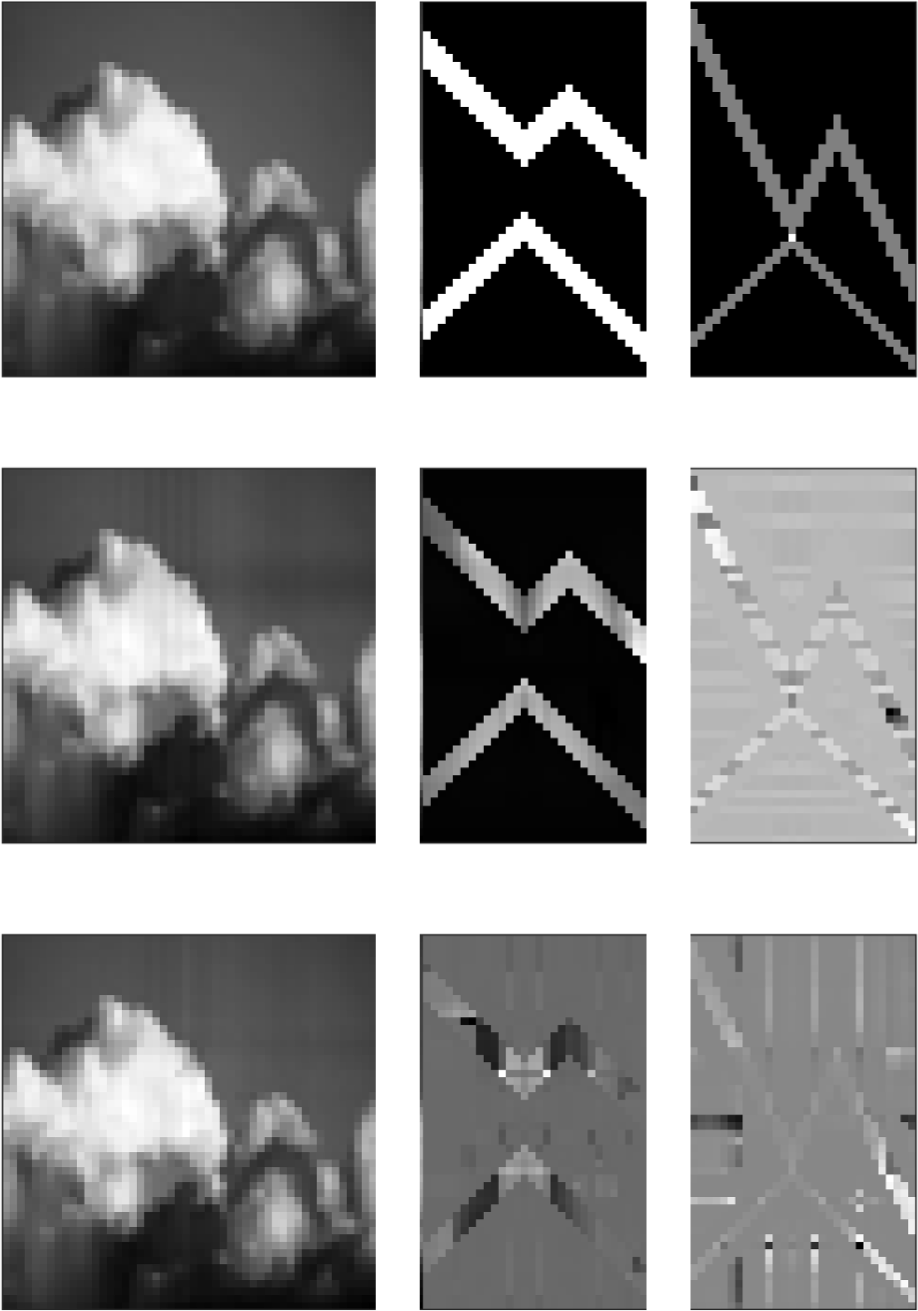}};
\node[rotate=90] at (-7.8, 2.95) {\footnotesize Ground-Truth};
\node[rotate=90] at (-7.8, 0) {\footnotesize $\text{BMD-ALS}_{SVD}$};
\node[rotate=90] at (-7.8, -3) {\footnotesize $\text{BMD-ALS}_{DMD}$};
\node at (-6.25, 4.5) {\footnotesize $\squeeze(\TA_{:,1,:})$};
\node at (-4, 4.5) {\footnotesize $\TB_{:,:,2}+\TB_{:,:,3}$};
\node at (-1.2, 4.5) {\footnotesize $\squeeze(\TC_{2,:,:}+\TC_{3,:,:})^{\top}$};
\node at (-6.25, 1.5) {\footnotesize $\squeeze(\hat{\TA}_{:,1,:})$};
\node at (-4, 1.5) {\footnotesize $\hat{\TB}_{:,:,2}$};
\node at (-2.2, 1.5) {\footnotesize $\squeeze(\hat{\TC}_{2,:,:})^{\top}$};
\node at (-6.25, -1.5) {\footnotesize $\squeeze(\hat{\TA}_{:,1,:})$};
\node at (-4, -1.5) {\footnotesize $\hat{\TB}_{:,:,2}$};
\node at (-2.2, -1.5) {\footnotesize $\squeeze(\hat{\TC}_{2,:,:})^{\top}$};

\node at (-0.7+0.5, 2.7) {\footnotesize $(b)$};
\node at (2.7+0.5, 0) {\includegraphics[width=0.4\linewidth]{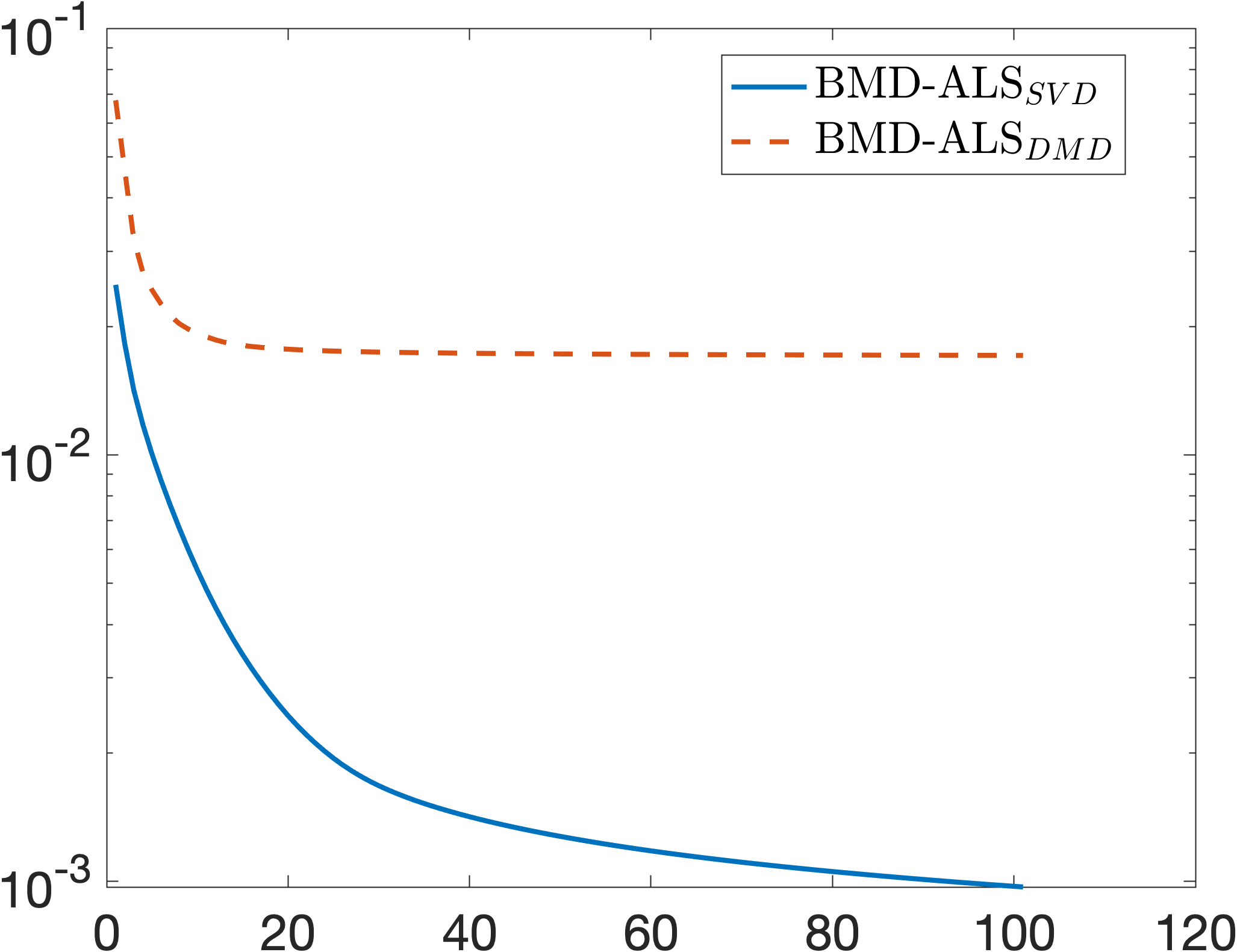}};
\node[rotate=90] at (-0.7+0.5, 0) {\footnotesize Relative Error};
\node at (3+0.5, 2.6) {\footnotesize ALS convergence};
\node at (3+0.5, -2.8) {\footnotesize No. of iterations};
\end{tikzpicture}
\caption{Comparison of the BM-rank 2 approximation factor tensors and ALS convergence to the simulated video with spatiotemporal slice-wise SVD initial guess ($\text{BMD-ALS}_{SVD}$) and the DMD initial guess ($\text{BMD-ALS}_{DMD}$). (a) The ground-truth factor tensor slices with $\squeeze(\TA_{:,1,:})$ capturing the stationary background image, $\TB_{:,:,2}$ depicting the left-motion of the object and  $\squeeze(\TC_{2,:,:})^{\top}$ depicting the right-motion of the object. (b) The ALS convergence comparisons for the two methods. The relative error is computed for the overall video tensor $\frac{\|\TX-\hat{\TX}\|_F}{\|\TX\|_F}$.}
\label{fig:sim_svd2}
\end{figure}
Fig.~(\ref{fig:real_bg}) shows the ground-truth background images, the first frames of the SS-SVD and DMD reconstructed background, and the 60th frames of the BMD reconstructed background video sequences. We compare qualitatively the four methods. As we can see in the figure, all four methods perform comparably well for approximating the stationary road background in the ``Car'' video sequence. Although for the DMD method, the foreground car motion trajectories are visible in the reconstructed background image. For both the``Escalator'' and the ``Human Body'' video sequences, vertical artifacts appeared for the SS-SVD, BMD-ALS$_{SVD}$ and BMD-ALS$_{DMD}$ methods. As for the DMD method, we observe similar foreground motions present in the background, which is also the case for the other two videos ``Hall and Monitor'' and ``IBM Test''. Additionally, in the ``IBM Test'' video, we observe a rectangular artifact appeared at the top left corner for the BMD-ALS$_{DMD}$ result. In our experiments, we noticed this artifact could be eliminated if we increase the BM-rank. Overall, all four methods provide visually appealing stationary background reconstructions. This result aligns with the significance of application of the SS-SVD method and the DMD method in background subtraction for surveillance video. In Fig.~(\ref{fig:real_fg}), the 60-th frame of each foreground video sequence is displayed for the four methods of interest. Both the SS-SVD and the DMD foreground frames are obtained by subtracting the background frame shown in Fig.~(\ref{fig:real_bg}) from the original video. For the BMD method, we show the 60-th foreground frame from the reconstructed sequence $\displaystyle{\T{X}^{\text{fg}}_{\text{BMD}}}$.Overall, we observe that the background subtraction methods have the drawback of unable to remove the background information completely when the background is not perfectly approximated. For instance, in the ``Human Boday'' and ``IBM Test'' video sequences, the office background and the checkerboard floor are also visible in the foreground frames as shown in Fig.~(\ref{fig:real_fg}). However, in the BMD-ALS$_{SVD}$ reconstructed foreground frames, we can see that the background information are clearly removed. Despite the artifacts, the people and their cloths are well approximated with great details comparable to the ground-truth. Additionally, we see that when the object size is small relative to the frame size, the foreground frames are approximated with better visual quality for all four methods, such as the frames of the ``Car'', ``Escalator'' and ``IBM Test'' video sequences shown in Fig.~(\ref{fig:real_bg}). For the videos ``Hall and Monitor'' and ``Human Body'', we observe more artifacts in the BMD results when the foreground objects are relatively large comparing to the frame size. A larger BM-rank is needed to recover better foreground results. Moreover, we notice that between the two different initializations of SS-SVD and DMD, the BMD-ALS algorithm with SS-SVD initial guess often outperforms the DMD initial guess making it a more suitable choice for the video background/foreground separation application. In particular, in the next section we will demonstrate the impressive fourth-order color BMD results using only the SS-SVD initial guess. 

\begin{table}[ht]
\resizebox{1\linewidth}{!}
{\begin{minipage}{1.5\linewidth}
\centering
\begin{tabular}{|c|c|cccccc|} 
 \hline
\multicolumn{2}{|c|}{Video sequence}  & Simulated & Car & Escalator & Hall and
Monitor & Human Body & IBM Test \\ 
 \hline
\multicolumn{2}{|c|}{\footnotesize  Height $\times$ \# Frames  $\times$ Width} & $50\times 30 \times 50$ & $120\times 120 \times 160$ & $130\times 200 \times 160$ & $100\times 150 \times 147$ & $100\times 150 \times 134$ & $100\times 90 \times 134$\\
\hline
\multicolumn{2}{|c|}{BM-rank $\ell$}  & 2 & 5 & 8 & 3 & 6 & 4 \\ 
\hline
\multicolumn{2}{|c|}{CR } & 0.1467 & 0.1146 & 0.1515 & 0.0704 & 0.1448 & 0.1143 \\ 
\hline
\multirow{2}{3em}{\centering RE} &
$\text{BMD-ALS}_{SVD}$ & \textbf{0.0010} & 0.0374 &  0.0413  & \textbf{0.0304} & \textbf{0.0365} & 0.0405 \\
\cline{2-8}
& $\text{BMD-ALS}_{DMD}$ & 0.0111 & \textbf{0.0364} &  \textbf{0.0401} & 0.0311 & 0.0372 &  \textbf{0.0402} \\
 \hline
\end{tabular}
\end{minipage}}
\caption{Comparison of the video reconstruction results for the BMD-ALS algorithm with both the SS-SVD and the DMD initializations.} 
\label{tbl:BMD_results}
\end{table}

\begin{table}[ht]
\resizebox{1\linewidth}{!}
{\begin{minipage}{1.5\linewidth}
\centering
\begin{tabular}{|c|c|ccccc|} 
 \hline
\multirow{2}{10em}{\centering Video Sequence}  & \multirow{2}{7em}{\centering Methods} & \multicolumn{5}{c|}{Background Evaluation Metrics} \\
\cline{3-7}
& & AGE & pEPs & pCEPs & MS-SSIM & PSNR\\
\hline
\multirow{4}{10em}{\centering Simulated} & SS-SVD  & 4.1104&	0.0332&	0.0120&	0.9800	&30.1105\\
& DMD & 2.33 & 0.0392 &0.0192 & 0.9753 &27.15\\
& BMD-ALS$_{SVD}$ & \textbf{0.2750} & \textbf{0} & \textbf{0} & \textbf{0.9998} & \textbf{54.0179} \\
& BMD-ALS$_{DMD}$  &1.3276 &0.0111	&0.0041	&0.9927	&37.8710 \\
\hline
\multirow{4}{10em}{\centering Hall and Monitor} & SS-SVD & 11.6629	&0.0540	&0.0265	&0.9670	&25.3568\\
& DMD & \textbf{3.6932}	&0.0493	&0.0278	&\textbf{0.9745}	&29.8246 \\
& BMD-ALS$_{SVD}$& 5.9975	&\textbf{0.0439}	&\textbf{0.0242}	&0.9742	& \textbf{30.4387} \\
& BMD-ALS$_{DMD}$ &10.4643	&0.1357	&0.1059	&0.9594	&27.8144\\
\hline
\multirow{4}{10em}{\centering Human Body} & SS-SVD &22.6678	&0.3798	&0.2626	&0.8954	&17.7232\\
& DMD & 11.7410	&\textbf{0.1357}	&\textbf{0.0769}	&0.8718&19.8134\\
& BMD-ALS$_{SVD}$ & \textbf{11.6589} & 0.1772& 0.1166 & \textbf{0.9277} &	\textbf{21.5720}\\
& BMD-ALS$_{DMD}$ & 35.0373 &0.5935 &	0.4518&	0.7640	&15.9829\\
\hline
\multirow{4}{10em}{\centering IBM Test} & SS-SVD & 6.4078	&\textbf{0.0425}	&\textbf{0.0142}	&0.9822&	\textbf{29.4235}\\
& DMD & \textbf{5.1016}	&0.0904	&0.0619	&0.9616	&26.5307\\
& BMD-ALS$_{SVD}$ & 9.9211	&0.1410	&0.1004	&\textbf{0.9827}&27.9591\\
& BMD-ALS$_{DMD}$ & 11.6214	&0.0934	&0.0690	&0.9456	&18.5423\\
\hline
\end{tabular}
\end{minipage}}
\caption{Background evaluation metrics, comparison of the four methods: SS-SVD, DMD, $\text{BMD-ALS}_{SVD}$, and $\text{BMD-ALS}_{DMD}$ for video sequences with available ground-truth background image.} 
\label{tbl:bk_evals}
\end{table}

\begin{figure}[ht]
\centering
\begin{tikzpicture}
\node at (0,0) {\includegraphics[width=0.85\linewidth]{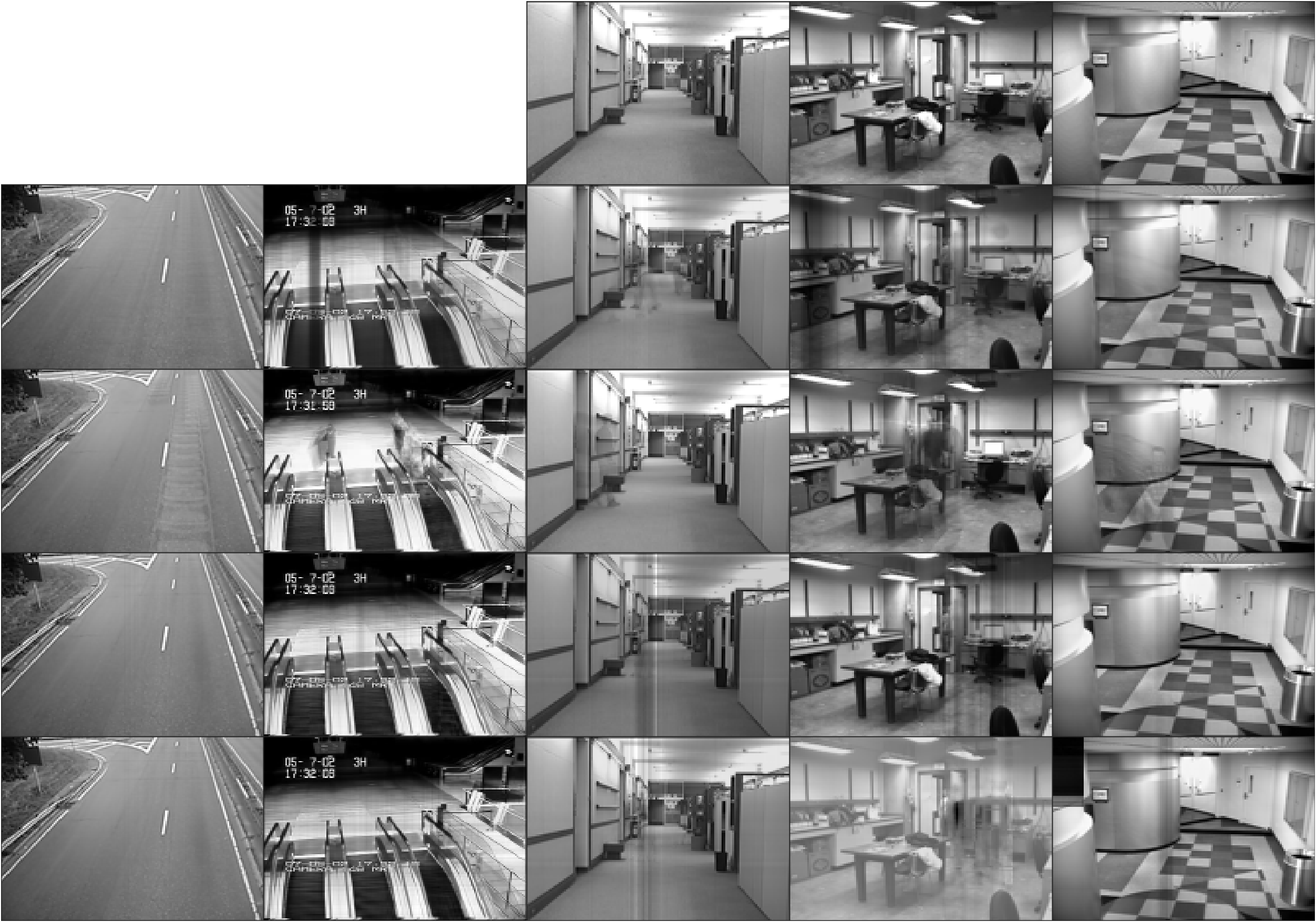}};
\node[rotate=-90] at (7, 3.8) {\scriptsize Ground-truth};
\node[rotate=-90] at (7, 1.8) {\scriptsize SS-SVD};
\node[rotate=-90] at (7, 0.2) {\scriptsize DMD};
\node[rotate=-90] at (7, -1.7) {\scriptsize $\text{BMD-ALS}_{SVD}$};
\node[rotate=-90] at (7, -4) {\scriptsize $\text{BMD-ALS}_{DMD}$};
\node at (0, 5.1) {\footnotesize Background Reconstruction};
\node at (-5.3, -5) {\scriptsize Car};
\node at (-2.7, -5) {\scriptsize Escalator};
\node at (0, -5) {\scriptsize Hall and Monitor};
\node at (2.7, -5) {\scriptsize Human Body};
\node at (5.4, -5) {\scriptsize IBM Test};
\end{tikzpicture}
\caption{Background reconstruction. Comparison of the four methods: 1. SS-SVD; 2. $\text{BMD-ALS}_{SVD}$; 3. DMD; 4. $\text{BMD-ALS}_{DMD}$.}
\label{fig:real_bg}
\end{figure}

\begin{figure}[!ht]
\centering
\begin{tikzpicture}
\node at (0,0) {\includegraphics[width=0.85\linewidth]{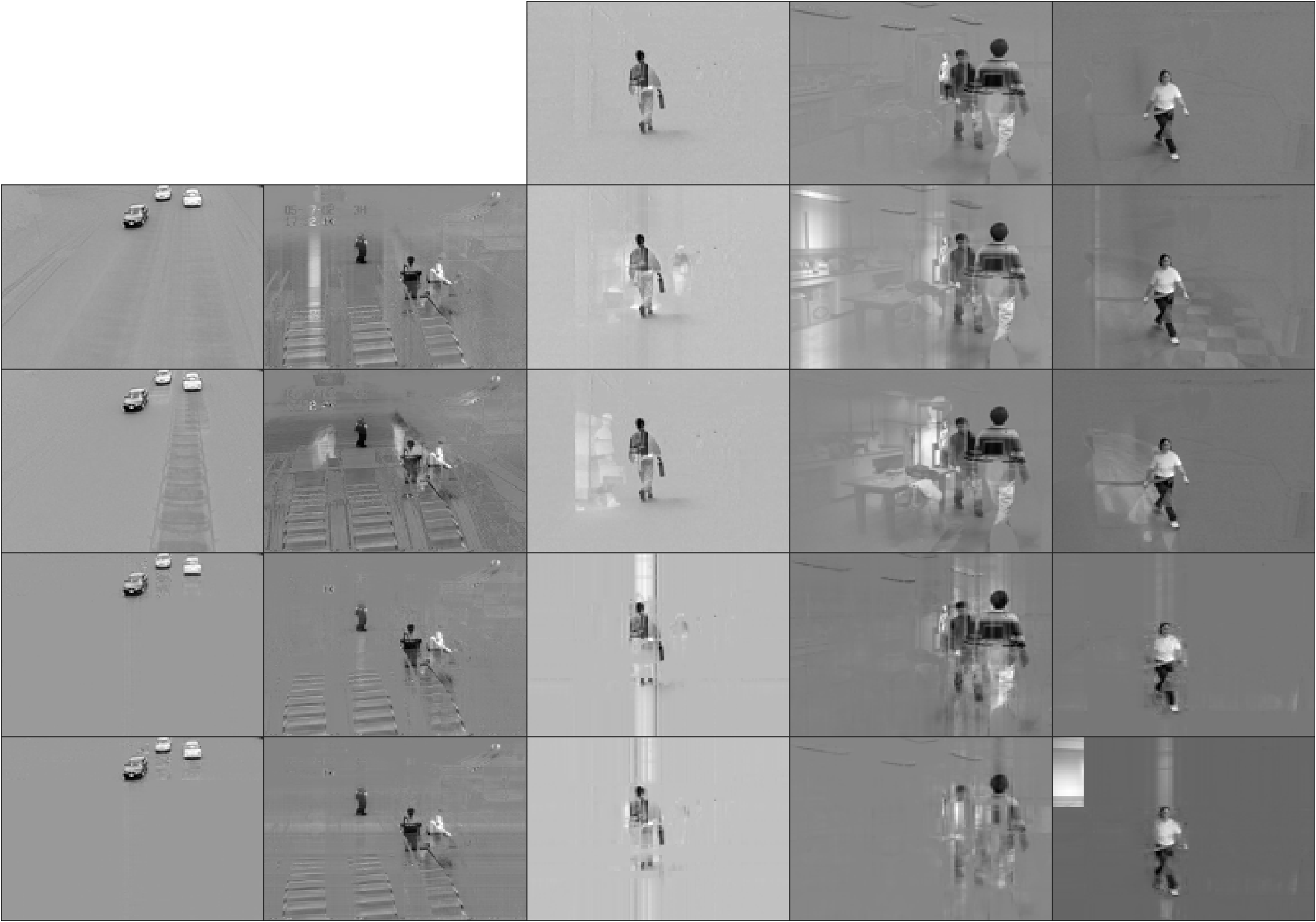}};
\node[rotate=-90] at (7, 3.8) {\scriptsize Ground-truth};
\node[rotate=-90] at (7, 1.8) {\scriptsize SS-SVD};
\node[rotate=-90] at (7, 0.2) {\scriptsize DMD};
\node[rotate=-90] at (7, -1.7) {\scriptsize $\text{BMD-ALS}_{SVD}$};
\node[rotate=-90] at (7, -4) {\scriptsize $\text{BMD-ALS}_{DMD}$};
\node at (0, 5.1) {\footnotesize Foreground Reconstruction};
\node at (-5.3, -5) {\scriptsize Car};
\node at (-2.7, -5) {\scriptsize Escalator};
\node at (0, -5) {\scriptsize Hall and Monitor};
\node at (2.7, -5) {\scriptsize Human Body};
\node at (5.4, -5) {\scriptsize IBM Test};
\end{tikzpicture}
\caption{Foreground reconstruction: frame 60. Comparison of the four methods: 1. SS-SVD; 2. $\text{BMD-ALS}_{SVD}$; 3. DMD; 4. $\text{BMD-ALS}_{DMD}$.}
\label{fig:real_fg}
\end{figure}

\section{Decomposing a Fourth-Order Tensor}
\label{sec:colorBMD}
The formal definition of the BM product for fourth order tensors described in \cite{gnang2017spectra} requires four factor tensors. The computation time and storage quickly become intractable for high order BM products.  In the context we consider here, using a fourth order BMP would be overkill and too expensive. We therefore define a new BM-product-like operation, called \textbf{\textsc{bmp4}}, and corresponding BM-product-like decomposition for fourth order tensors from triples of fourth order tensors: 

\begin{definition}\label{def:bmp4th}
    Given a tensor triplet $\T{A}\in\mathbb{R}^{m\times\ell\times n\times q}, \T{B}\in\mathbb{R}^{m\times p\times\ell\times q}$, and $\T{C}\in\mathbb{R}^{\ell\times p\times n\times q}$, which are conformable on the first three indices, then $\T{X} = \bmpF\left(\T{A},\T{B},\T{C}\right) \in\mathbb{R}^{m\times p\times n\times q}$, is specified entry-wise as follows,
\begin{equation}
    \T{X}_{i,j,k,z}=\sum_{1\leq t\leq\ell}\T{A}_{i,t,k,z}\T{B}_{i,j,t,z}\T{C}_{t,j,k,z}
\end{equation}
for all $1\leq i\leq m,1\leq j\leq p,1\leq k\leq n$, and $1\leq z\leq q$.  
\end{definition}
When $\ell=1$, a $\text{BM}_4$-outer product corresponds to the $\text{BMP}_4$ of the conformable order-3 tensor slices $\T{A}\in \mathbb{R}^{m\times 1\times n\times q}$, $\T{B}\in \mathbb{R}^{m\times 1\times n\times q}$, and $\T{C}\in \mathbb{R}^{1\times p\times n\times q}$. Then the corresponding notion of rank can be defined as follows.
\begin{definition}
The $\text{BM}_4$-rank of $\TX \in \mathbb{R}^{m\times p\times n \times q}$ is the minimum number of the $\text{BM}_4$-outer products of conformable tensor slices that sum up to $\TX$.
\end{definition}

We note that our definition of the fourth-order BM-product given above multiplies a tensor triplet, in which case, the multiplication is orientation dependent with one of the dimensions held fixed. This is different than the orientation independent fourth order BMP given in \cite{gnang2017spectra} which would have required an additional tensor factor and would not scale well for the video application.  Our product is designed to respect relationship among the color channels.     

\subsection{An ALS Algorithm of BMD for Fourth-Order Tensors}\label{sec:als4d}
Using {\tt bmp4} defined above, we can extend the BM-decompostion to fourth-order video tensor. Given $\T{X} \in \mathbb{R}^{m\times p\times n\times q}$, a low $\text{BM}_4$-rank approximation to $\T{X}$ is obtained by solving
\begin{equation}
    \min_{\T{A},\T{B},\T{C}}\left\Vert \T{X}-\text{bmp4}\left(\T{A},\T{B},\T{C}\right)\right\Vert _{F}^{2},
\end{equation}
where $\left(\T{A},\T{B},\T{C}\right) \in \mathbb{R}^{m\times \ell \times n\times q}\times \mathbb{R}^{m\times p \times \ell\times q} \times  \mathbb{R}^{\ell\times p \times n\times q}$. We can similarly solve the problem by an alternating least-squares algorithm to obtain the fourth order tensor BMD. 

First, we set the initial guess $\T{B}^{0}$ to be all ones of size $m\times p\times \ell \times q$. We then obtain the initial guess pair $\T{A}^{0}$ and $\T{C}^{0}$ by fixing the fourth dimension and applying either a matrix decomposition based or tensor slice-wise decomposition based methods as described in Phase I (Sec.~\ref{subsec:phase1}) to the order-3 tensor slices $\T{X}_{:,:,:,z}$, for all $1\leq z \leq q$. The order-3 tensor slice decomposition results in the pair $\T{A}^{0}$ and $\T{C}^{0}$. Next, to update $\TB$, we modify the algorithm given in Phase II (Sec.~\ref{subsec:phase2}) which requires solving the following problem
\begin{equation*}
    \widehat{\T{B}} = \min_{\T{B}\in\mathbb{R}^{m\times p\times\ell\times q}}\left\Vert \T{X}-\bmpF\left(\T{A}^{0},\T{B},\T{C}^{0}\right)\right\Vert _{F}^{2}.
\end{equation*}
Similar to the third-order case, we show that this problem can be reduced to parallelizable smaller least-squares subproblems. 

By the definitions of the Frobenius norm and forth-order tensor BMP, we have
\begin{equation}\label{eq:FnormBMP4}
\left\Vert \T{X}-\bmpF\left(\T{A}^{0},\T{B},\T{C}^{0}\right)\right\Vert _{F}^{2}=\sum_{i=1}^{m}\sum_{j=1}^{p}\sum_{k=1}^{n}\sum_{z=1}^{q}\left|\T{X}_{i,j,k,z}-\sum_{t=1}^{\ell}\T{A}_{i,t,k,z}^{0}\T{B}_{i,j,t,z}\T{C}_{t,j,k,z}^{0}\right|^{2}.
\end{equation}
Holding the indices $i$, $j$, and $z$ fixed, we regroup the remaining indices and rewrite the right-hand side of Eq.~(\ref{eq:FnormBMP4}) as
\begin{equation*}
    \sum_{(i-1)p+j=1}^{mp} \sum_{(z-1)n+k=1}^{nq} \left|\V{x}_{k}^{\left(i,j\right)}-\sum_{t=1}^{\ell}\M{H}_{k,t}^{\left(i,j\right)}\V{b}_{t}^{\left(i,j\right)}\right|^2,
\end{equation*}
where $\V{x}^{(i,j)} = \vvec(\squeeze(\T{X}_{i,j,:,:})) \in \mathbb{R}^{nq\times 1}$, $\V{b}^{(i,j)} = \vvec(\squeeze(\T{B}_{i,j,:,:})) \in \mathbb{R}^{\ell q \times 1}$, and $\M{H}^{(i,j)} \in \mathbb{R}^{nq \times \ell q}$ with entries specified as $\M{H}^{(i,j)}_{k,t} = \T{A}_{i,t,k,z}^{0}\TC_{t,j,k,z}^{0}$. This expression suggests that $\T{B}$ can be updated via solving following parallelizable smaller linear least-squares subproblems
\begin{equation}\label{eq:4Dsub-ls}
    \widehat{\V{b}}^{\left(i,j\right)} = \min_{\V{b}^{\left(i,j\right)}} \left\Vert \V{x}^{\left(i,j\right)}-\M{H}^{\left(i,j\right)}\V{b}^{\left(i,j\right)}\right\Vert _{F}^{2}.
\end{equation}

We additionally define the fourth order tensor transposes to help expressing the ALS algorithm using {\tt bmp4}.
\begin{definition}
Given $\T{X}\in\mathbb{R}^{m\times p\times n\times q}$, we define
\begin{equation*}
\T{X}^{\top_4}=\perm\left(\T{X},\left[2,3,1,4\right]\right), \quad
\T{X}^{\top^{2}_4}=\perm\left(\T{X},\left[3,1,2,4\right]\right).
\end{equation*}
\end{definition}
Note the above definition of the fourth order tensor transpose is the same for the third-order case holding the last dimension fixed.

Under this definition, we can show the following convenient fact around which we can base a fourth-order BMD-ALS algorithm.
\begin{theorem} Given $\T{X} = \bmpF\left(\T{A},\T{B},\T{C}\right)$, 
\begin{equation*}
    \T{X}^{\top_4} = \bmpF\left(\T{A},\T{B},\T{C}\right)^{\top_4} = \bmpF\left(\T{B}^{\top_4},\T{C}^{\top_4},\T{A}^{\top_4}\right).
\end{equation*}
\end{theorem}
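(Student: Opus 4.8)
The plan is to prove the identity by slicing along the fixed fourth mode and reducing it to the third-order transpose identity $\bmp(\T{A},\T{B},\T{C})^{\top}=\bmp(\T{B}^{\top},\T{C}^{\top},\T{A}^{\top})$ already recorded in Section~\ref{subsec:definitions}, and then (as a cross-check) to note the direct entrywise verification. The observation that makes this work is that $\bmpF$ is nothing but the third-order BM product applied slicewise along the fourth index, and that $\top_4$ is nothing but the third-order transpose applied slicewise along the fourth index.

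First I would record two elementary facts. (i) For each fixed $z$, specializing Definition~\ref{def:bmp4th} to that value of the fourth index gives $\T{X}_{i,j,k,z}=\sum_{t=1}^{\ell}\T{A}_{i,t,k,z}\T{B}_{i,j,t,z}\T{C}_{t,j,k,z}$, which is exactly the third-order BM product (Definition~\ref{def:bmp-entrywise}) of the slices; that is, $\bmpF(\T{A},\T{B},\T{C})_{:,:,:,z}=\bmp(\T{A}_{:,:,:,z},\T{B}_{:,:,:,z},\T{C}_{:,:,:,z})$. (ii) Since $\top_4$ cyclically permutes the first three indices and leaves the fourth untouched (the remark following its definition), $(\T{Y}^{\top_4})_{:,:,:,z}=(\T{Y}_{:,:,:,z})^{\top}$ for every $z$. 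I would also do the shape bookkeeping: from $\T{A}\in\mathbb{R}^{m\times\ell\times n\times q}$, $\T{B}\in\mathbb{R}^{m\times p\times\ell\times q}$, $\T{C}\in\mathbb{R}^{\ell\times p\times n\times q}$ we get $\T{X}\in\mathbb{R}^{m\times p\times n\times q}$, hence $\T{X}^{\top_4}\in\mathbb{R}^{p\times n\times m\times q}$, while $\T{B}^{\top_4}\in\mathbb{R}^{p\times\ell\times m\times q}$, $\T{C}^{\top_4}\in\mathbb{R}^{p\times n\times\ell\times q}$, $\T{A}^{\top_4}\in\mathbb{R}^{\ell\times n\times m\times q}$ are conformable on their first three indices in the sense of Definition~\ref{def:bmp4th}, so $\bmpF(\T{B}^{\top_4},\T{C}^{\top_4},\T{A}^{\top_4})\in\mathbb{R}^{p\times n\times m\times q}$ as well. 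Both sides thus have the same shape, and it remains to match them slicewise.

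Then the argument closes in one line: for each $z$,
\begin{equation*}
\left(\T{X}^{\top_4}\right)_{:,:,:,z}=\left(\T{X}_{:,:,:,z}\right)^{\top}=\bmp\!\left(\T{A}_{:,:,:,z},\T{B}_{:,:,:,z},\T{C}_{:,:,:,z}\right)^{\top}=\bmp\!\left(\left(\T{B}_{:,:,:,z}\right)^{\top},\left(\T{C}_{:,:,:,z}\right)^{\top},\left(\T{A}_{:,:,:,z}\right)^{\top}\right)=\bmpF\!\left(\T{B}^{\top_4},\T{C}^{\top_4},\T{A}^{\top_4}\right)_{:,:,:,z},
\end{equation*}
using fact (ii) at the ends, fact (i) at the middle two steps, and the third-order transpose identity in the center; as this holds for all $z$, the two fourth-order tensors coincide. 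As a sanity check I would also expand both sides entrywise using $(\T{Y}^{\top_4})_{a,b,c,z}=\T{Y}_{c,a,b,z}$, obtaining $(\T{X}^{\top_4})_{i,j,k,z}=\sum_{t=1}^{\ell}\T{A}_{k,t,j,z}\T{B}_{k,i,t,z}\T{C}_{t,i,j,z}$ on the left and $\sum_{t=1}^{\ell}\T{B}_{k,i,t,z}\T{C}_{t,i,j,z}\T{A}_{k,t,j,z}$ on the right, which agree after reordering the scalar product. There is essentially no obstacle here beyond index bookkeeping; the one thing worth double-checking is that the cyclic permutation keeps the contracted index $t$ in exactly the slot that $\bmpF$ sums over — which is precisely why it is the cyclic arrangement $(\T{B}^{\top_4},\T{C}^{\top_4},\T{A}^{\top_4})$, rather than some other permutation of the factors, that appears. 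The same argument, iterated, gives $\T{X}^{\top^{2}_4}=\bmpF(\T{C}^{\top^{2}_4},\T{A}^{\top^{2}_4},\T{B}^{\top^{2}_4})$ and $\T{X}^{\top^{3}_4}=\T{X}$.
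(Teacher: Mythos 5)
Your proof is correct, and it takes the route the paper itself intends (the paper states this theorem without proof, but its remark that $\top_4$ is just the third-order transpose with the last dimension held fixed, combined with the third-order identity $\bmp(\T{A},\T{B},\T{C})^{\top}=\bmp(\T{B}^{\top},\T{C}^{\top},\T{A}^{\top})$, is exactly your slicewise reduction). Your shape bookkeeping and the entrywise cross-check $(\T{X}^{\top_4})_{i,j,k,z}=\sum_{t}\T{A}_{k,t,j,z}\T{B}_{k,i,t,z}\T{C}_{t,i,j,z}$ both check out, so nothing is missing.
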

We can then formulate the following ALS algorithm based on the order-4 BMD of a given tensor $\T{X}\in \mathbb{R}^{m\times p\times n\times q}$.
\begin{equation*}
\begin{split}
     \T{B}^{k+1} &= \min_{\T{B}\in\mathbb{R}^{m\times p\times\ell \times q}}\left\Vert \T{X}-\bmpF\left(\T{A}^{k},\T{B},\T{C}^{k}\right)\right\Vert _{F}^{2},\\
    \left(\T{C}^{\top_4}\right)^{k+1} &= \min_{\T{C}^{\top_4} \in\mathbb{R}^{ p\times n \times \ell \times q}}\left\Vert \T{X}^{\top_4}-\bmpF\left(\left(\T{B}^{\top_4}\right)^{k+1},\T{C}^{\top_4},\left(\T{A}^{\top_4}\right)^{k}\right)\right\Vert _{F}^{2},\\
    \left(\T{A}^{\top^{2}_4}\right)^{k+1} &= \min_{\T{A}^{\top^2_4}\in\mathbb{R}^{n\times m\times\ell\times q}}\left\Vert \T{X}^{\top^{2}_4}-\bmpF\left(\left(\T{C}^{\top^{2}_4}\right)^{k+1},\T{A}^{\top^{2}_4},\left(\T{B}^{\top^{2}_4}\right)^{k+1}\right)\right\Vert _{F}^{2}.
\end{split}
\end{equation*}
Similar to the discussion for the third order case, each ALS subproblem can be solved in parallel as smaller sized linear least-squares problems.
\subsection{Application to Color Video with Regularization}
Consider a given video with three color channels $\T{X}\in\mathbb{C}^{m\times p\times n\times3}$. Each frame of the video is of size $m\times n \times 3$, where the third dimension encodes  color. The total number of frames is $p$. Fixing the color channel dimension, our goal is to obtain a tensor triplet $\T{A}\in\mathbb{R}^{m\times\ell\times n\times3}$, $\T{B}\in\mathbb{R}^{m\times n\times\ell\times3}$, and $\T{C}\in\mathbb{R}^{\ell\times p\times n\times3}$ from the fourth order BMD of $\T{X}$ such that $\T{X} \approx \bmpF\left(\T{A},\T{B},\T{C}\right)$.

Since the different tensor slices along the color channel for the factors associated with the temporal dimension (for example, if the frames are oriented as lateral slices, the factor tensors with temporal dimension is $\T{B}$ and $\T{C}$) should be the same, or the difference is small. We consider adding the following constraints to mitigate the non-uniqueness of the solutions present in the order-4 BMD. We note that comparing to the regularized ALS given in Sec.~\ref{sec:regALS}, this constraint is specific for color video decomposition applications but does not promote the background/foreground separation task. 

Consider solving for the factor tensor $\T{B}\in\mathbb{C}^{m\times p\times\ell\times3}$, we add the following regularization
\begin{equation}\label{eq:Bconstraint}
    \left\Vert \T{B}_{i,j,t,1}-\T{B}_{i,j,t,2}\right\Vert_2 <\epsilon \text{ and } \left\Vert \T{B}_{i,j,t,2}-\T{B}_{i,j,t,3}\right\Vert_2 <\epsilon.
\end{equation}
Notice that, from Eq.~(\ref{eq:4Dsub-ls}), holding the indices $(i,j)$ fixed, we obtain the vector $\V{b}^{(i,j)} \in \mathbb{R}^{3\ell \times 1}$ by column major order vectorization of the slice $\T{B}_{i,j,:,:}$, i.e. $\V{b}^{(i,j)} = \vvec(\squeeze(\T{B}_{i,j,:,:}))$, for all $1\leq i\leq m, 1\leq j\leq p$.

Let us define the difference operator $\M{R} \in \mathbb{R}^{2\ell \times 3\ell}$ such that, for all $1\leq t \leq \ell, 1\leq v\leq 2$,
\begin{equation*}
    \M{R}_{(v-1)\ell+t, v\ell+t} = -1 \text{ and } \M{R}_{(v-1)\ell+t, (v-1)\ell+t}=1
\end{equation*}
So the constraint given in Eq.~(\ref{eq:Bconstraint}) is equivalent to applying the difference operator $\M{R}$ on the vectorized slice $\T{B}_{i,j,:,:}$ such that
$\left\Vert\M{R}\V{b}^{(i,j)} \right\Vert_2 <\epsilon$.

Similarly, for the factor tensor $\T{C}^{\top_4} \in\mathbb{R}^{p\times n\times\ell\times 3}$, we have 
\begin{equation*}
    \left\Vert \T{C}^{\top_4}_{j,k,t,1}-\T{C}^{\top_4}_{j,k,t,2}\right\Vert_2 <\epsilon \text{ and } \left\Vert \T{C}^{\top_4}_{j,k,t,2}-\T{C}^{\top_4}_{j,k,t,3}\right\Vert_2 <\epsilon.
\end{equation*}
we can rewrite in an equivalent form with the difference operator $\M{R}$ applied to the vectorized tensor slice $\V{c}^{(j,k)} = \vvec(\squeeze(\T{C}^{\top_4}_{j,k,:,:}))$ as $\left\Vert\M{R}\V{c}^{(j,k)} \right\Vert <\epsilon$, for all $1\leq j\leq p, 1\leq k\leq n$.

Thus, a constrained ALS algorithm for the fourth order BMD of a the color video tensor $\T{X}$ has the following decoupled subproblems
\begin{equation}
\hat{\V{b}}^{(i,j)}=\min_{\V{b}^{\left(i,j\right)}\in\mathbb{R}^{3\ell\times1}}\left\Vert \V{y}_{\T{X}}^{\left(i,j\right)}-\M{H}_{\T{A},\T{C}}^{\left(i,j\right)}\V{b}^{\left(i,j\right)}\right\Vert _{F}^{2} + \left\Vert\M{R}\V{b}^{(i,j)} \right\Vert_{F}^{2},
\end{equation}
and the factor tensor ${\T{B}}$ is approximated by letting $\widehat{\T{B}}_{i,j,:,:} = \reshape\left(\hat{\V{b}}^{(i,j)}, [\ell,3]\right)$. Similarly, we update ${\T{C}}$ by solving
\begin{equation}
\hat{\V{c}}^{(j,k)}=\min_{\V{c}^{\left(j,k\right)}\in\mathbb{R}^{3\ell\times1}}\left\Vert \V{y}_{\T{X}^{\top_4}}^{\left(j,k\right)}-\M{H}_{\T{B},\T{A}}^{\left(j,k\right)}\V{c}^{\left(j,k\right)}\right\Vert _{F}^{2}+\left\Vert\M{R}\V{c}^{(j,k)} \right\Vert_{F}^{2},
\end{equation}
and hence $\widehat{\T{C}}^{\top_4}_{j,k,:,:} = \reshape\left(\hat{\V{c}}^{(j,k)}, [\ell,3]\right)$. Lastly, we update ${\T{A}}$ by solving
\begin{equation}
\hat{\V{a}}^{(k,i)}=\min_{\V{a}^{\left(k,i\right)}\in\mathbb{R}^{3\ell\times1}}\left\Vert \V{y}_{\T{X}^{\top^{2}_4}}^{\left(k,i\right)}-\M{H}_{\T{C},\T{B}}^{\left(k,i\right)}\V{a}^{\left(k,i\right)}\right\Vert _{F}^{2}.
\end{equation}
Then $\hat{\T{A}}^{\top^{2}_4}_{k,i,:,:} = \reshape\left(\hat{\V{a}}^{(k,i)}, [\ell,3]\right)$.

\subsection{Color Video Decomposition Results}
In Fig.~(\ref{fig:color}), we demonstrate the background and foreground separation results using the $\text{BMD-ALS}_{SVD}$ method for the fourth order color videos. The same BM-ranks of 2, 3, 6 and 4 are used for the ``Simulated'', ``Hall and Monitor'', ``Human Body'', and ``IBM Test'' color video sequences as the grayscale videos. Frame 60 of the original videos and the reconstructed background/foreground videos are selected to display. As we can see from the figure, the color background frame has excellent reconstruction quality compared to the ground-truth except small artifacts appeared in the ``Hall and Monitor'' video sequence. In the reconstructed foreground frames, vertical artifacts also appeared around the people moving across the background with small effect on visualizing the people clearly. 

\begin{figure}[!ht]
\centering
\begin{tikzpicture}
\node at (0,0) {\includegraphics[width=0.8\linewidth]{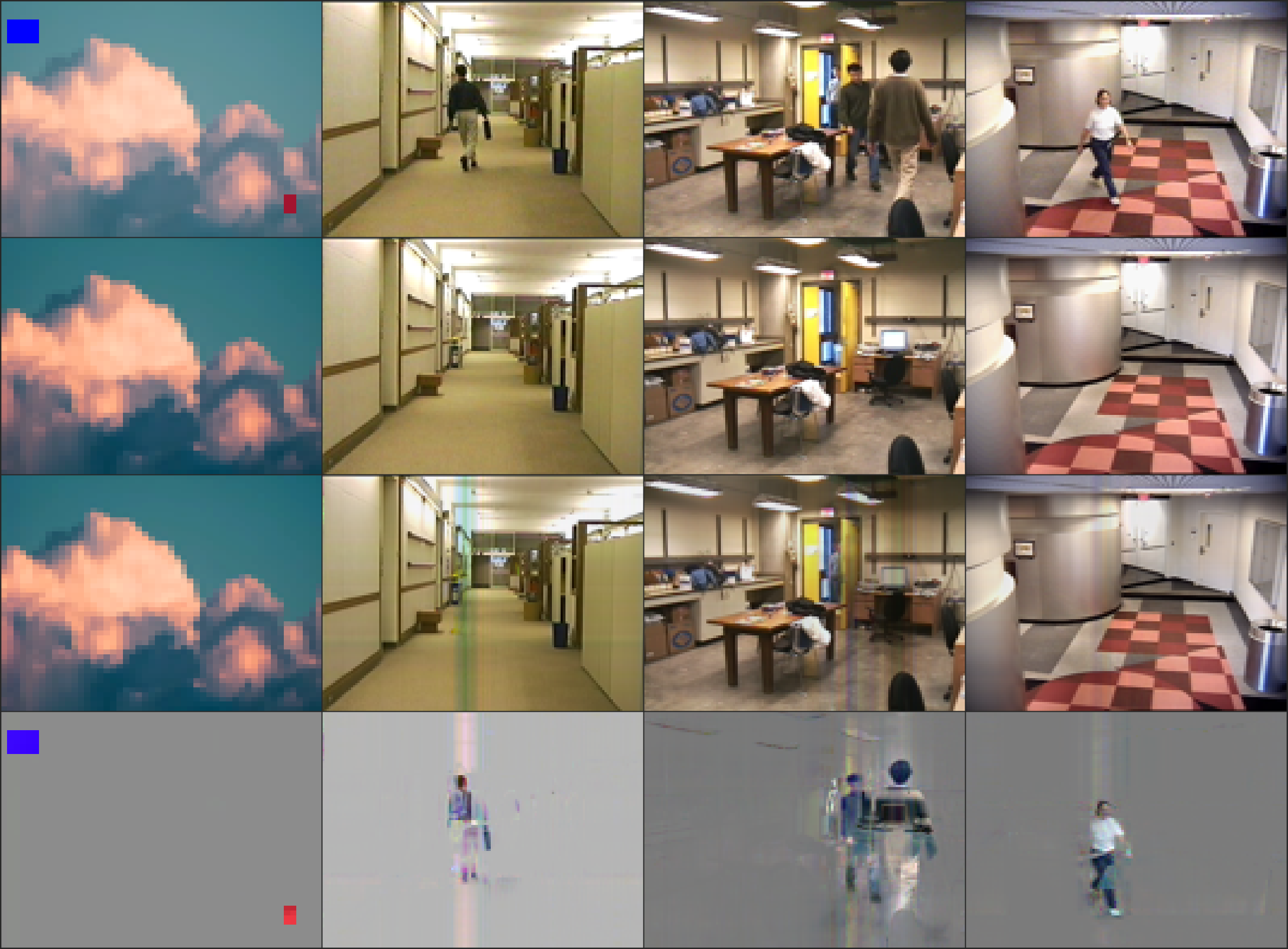}};
\node[rotate=-90] at (6.6, 3.5) {\scriptsize Original};
\node[rotate=-90] at (7.1, 1.1) {\scriptsize Ground-truth};
\node[rotate=-90] at (6.6, 1.1) {\scriptsize Background};
\node[rotate=-90] at (6.6, -1.1) {\scriptsize Background};
\node[rotate=-90] at (6.6, -3.5) {\scriptsize Foreground};
\draw [decorate,decoration={brace,amplitude=5pt,raise=2ex}]
  (6.6,-0.5) -- (6.6,-4) node[midway,xshift=2em,rotate=-90]{\scriptsize Reconstructed};
\node at (-4.7, 5) {\scriptsize Simulated Video};
\node at (-1.5, 5) {\scriptsize Hall and Monitor};
\node at (1.5, 5) {\scriptsize Human Body};
\node at (4.5, 5) {\scriptsize IBM Test};
\end{tikzpicture}
\caption{Color video background/foreground separation by the fourth-order $\text{BMD-ALS}_{SVD}$ method.}
\label{fig:color}
\end{figure}

\section{Conclusions and future work}
We have demonstrated that the third-order tensor BM-decomposition can be effectively used for grayscale video background/foreground separation with compression. Using our generative spatiotemporal video model, we have shown that videos with a stationary background are naturally low BM-rank tensors. As a result, we can model the stationary background with significant image quality comparable to existing methods of tensor SS-SVD and DMD, while obtaining compressible representation of the the foreground video. Without the need of using the background subtraction technique, our BMD estimated foreground video is less affected by the errors resulted from a poor background reconstruction quality. Moreover, we have shown that the alternating least-squares algorithm for computing the BMD is closely connected to the block nonlinear Gauss–Seidel (GS) method, from which the convergence analysis of the algorithm follows naturally. Additionally, we gave new theoretical insight into the BM-rank bounds and non-uniqueness, where the later was overcame by adding a regularization tailored to the video background/foreground separation application. Lastly, we extend the regularized ALS algorithm for color video compression and separation. In the future, we will investigate the use of constraints for a better posed problem, the feasibility of the extension of computation to higher dimensions, and we will investigate the utility of the BMD applied to other types of spatiotemporal data.

\bibliographystyle{plain}
\bibliography{references.bib} 

\begin{thebibliography}{10}

\bibitem{attouch2013convergence}
Hedy Attouch, J{\'e}r{\^o}me Bolte, and Benar~Fux Svaiter.
\newblock Convergence of descent methods for semi-algebraic and tame problems:
  proximal algorithms, forward--backward splitting, and regularized
  gauss--seidel methods.
\newblock {\em Mathematical Programming}, 137(1):91--129, 2013.

\bibitem{SBIdataset2016}
Thierry Bouwmans, Lucia Maddalena, and Alfredo Petrosino.
\newblock Scene background initialization (sbi) dataset, 2016.

\bibitem{bouwmans2017scene}
Thierry Bouwmans, Lucia Maddalena, and Alfredo Petrosino.
\newblock Scene background initialization: A taxonomy.
\newblock {\em Pattern Recognition Letters}, 96:3--11, 2017.

\bibitem{candes2011robust}
Emmanuel~J Cand{\`e}s, Xiaodong Li, Yi~Ma, and John Wright.
\newblock Robust principal component analysis?
\newblock {\em Journal of the ACM (JACM)}, 58(3):1--37, 2011.

\bibitem{carroll1970analysis}
J~Douglas Carroll and Jih-Jie Chang.
\newblock Analysis of individual differences in multidimensional scaling via an
  n-way generalization of “eckart-young” decomposition.
\newblock {\em Psychometrika}, 35(3):283--319, 1970.

\bibitem{de2000multilinear}
Lieven De~Lathauwer, Bart De~Moor, and Joos Vandewalle.
\newblock A multilinear singular value decomposition.
\newblock {\em SIAM journal on Matrix Analysis and Applications},
  21(4):1253--1278, 2000.

\bibitem{de2008decompositions}
Lieven De~Lathauwer and Dimitri Nion.
\newblock Decompositions of a higher-order tensor in block terms—part iii:
  Alternating least squares algorithms.
\newblock {\em SIAM journal on Matrix Analysis and Applications},
  30(3):1067--1083, 2008.

\bibitem{erichson2019compressed}
N~Benjamin Erichson, Steven~L Brunton, and J~Nathan Kutz.
\newblock Compressed dynamic mode decomposition for background modeling.
\newblock {\em Journal of Real-Time Image Processing}, 16:1479--1492, 2019.

\bibitem{erichson2016randomized}
N~Benjamin Erichson and Carl Donovan.
\newblock Randomized low-rank dynamic mode decomposition for motion detection.
\newblock {\em Computer Vision and Image Understanding}, 146:40--50, 2016.

\bibitem{garcia2020background}
Belmar Garcia-Garcia, Thierry Bouwmans, and Alberto Jorge~Rosales Silva.
\newblock Background subtraction in real applications: Challenges, current
  models and future directions.
\newblock {\em Computer Science Review}, 35:100204, 2020.

\bibitem{gnang2011spectral}
Edinah~K Gnang, Ahmed Elgammal, and Vladimir Retakh.
\newblock A spectral theory for tensors.
\newblock In {\em Annales de la Facult{\'e} des sciences de Toulouse:
  Math{\'e}matiques}, volume~20, pages 801--841, 2011.

\bibitem{gnang2017spectra}
Edinah~K Gnang and Yuval Filmus.
\newblock On the spectra of hypermatrix direct sum and kronecker products
  constructions.
\newblock {\em Linear Algebra and its Applications}, 519:238--277, 2017.

\bibitem{gnang2020bhattacharya}
Edinah~K Gnang and Yuval Filmus.
\newblock On the bhattacharya-mesner rank of third order hypermatrices.
\newblock {\em Linear Algebra and its Applications}, 588:391--418, 2020.

\bibitem{gnang2020symmetrization}
Edinah~K Gnang and Fan Tian.
\newblock A symmetrization approach to hypermatrix svd.
\newblock {\em arXiv preprint arXiv:2004.10368}, 2020.

\bibitem{golub2013matrix}
Gene~H Golub and Charles~F Van~Loan.
\newblock {\em Matrix computations}.
\newblock JHU press, 2013.

\bibitem{grippo2000convergence}
Luigi Grippo and Marco Sciandrone.
\newblock On the convergence of the block nonlinear gauss-seidel method under
  convex constraints.
\newblock {\em Operations research letters}, 26(3):127--136, 2000.

\bibitem{grosek2014dynamic}
Jacob Grosek and J~Nathan Kutz.
\newblock Dynamic mode decomposition for real-time background/foreground
  separation in video.
\newblock {\em arXiv preprint arXiv:1404.7592}, 2014.

\bibitem{haq2020dynamic}
Israr~Ul Haq, Keisuke Fujii, and Yoshinobu Kawahara.
\newblock Dynamic mode decomposition via dictionary learning for foreground
  modeling in videos.
\newblock {\em Computer Vision and Image Understanding}, 199:103022, 2020.

\bibitem{kajo2018svd}
Ibrahim Kajo, Nidal Kamel, Yassine Ruichek, and Aamir~Saeed Malik.
\newblock Svd-based tensor-completion technique for background initialization.
\newblock {\em IEEE Transactions on Image Processing}, 27(6):3114--3126, 2018.

\bibitem{karim2019tensor}
RAMIN~GOUDARZI KARIM.
\newblock {\em TENSOR DECOMPOSITIONS AND RANK APPROXIMATION OF TENSORS WITH
  APPLICATIONS}.
\newblock PhD thesis, The University of Alabama in Huntsville, 2019.

\bibitem{karim2020accurate}
Ramin~Goudarzi Karim, Guimu Guo, Da~Yan, and Carmeliza Navasca.
\newblock Accurate tensor decomposition with simultaneous rank approximation
  for surveillance videos.
\newblock In {\em 2020 54th Asilomar Conference on Signals, Systems, and
  Computers}, pages 842--846. IEEE, 2020.

\bibitem{kiers2000towards}
Henk~AL Kiers.
\newblock Towards a standardized notation and terminology in multiway analysis.
\newblock {\em Journal of Chemometrics: A Journal of the Chemometrics Society},
  14(3):105--122, 2000.

\bibitem{kilmer2021tensor}
Misha~E Kilmer, Lior Horesh, Haim Avron, and Elizabeth Newman.
\newblock Tensor-tensor algebra for optimal representation and compression of
  multiway data.
\newblock {\em Proceedings of the National Academy of Sciences},
  118(28):e2015851118, 2021.

\bibitem{kilmer2011factorization}
Misha~E Kilmer and Carla~D Martin.
\newblock Factorization strategies for third-order tensors.
\newblock {\em Linear Algebra and its Applications}, 435(3):641--658, 2011.

\bibitem{knuth1997art}
Donald~Ervin Knuth.
\newblock {\em The Art of Computer Programming: Fundamental Algorithms.},
  volume~1.
\newblock Addison-Wesley, 1997.

\bibitem{kolda2009tensor}
Tamara~G Kolda and Brett~W Bader.
\newblock Tensor decompositions and applications.
\newblock {\em SIAM review}, 51(3):455--500, 2009.

\bibitem{kutz2016dynamic}
J~Nathan Kutz, Steven~L Brunton, Bingni~W Brunton, and Joshua~L Proctor.
\newblock {\em Dynamic mode decomposition: data-driven modeling of complex
  systems}.
\newblock SIAM, 2016.

\bibitem{kutz2015multi}
J~Nathan Kutz, Xing Fu, Steve~L Brunton, and N~Benjamin Erichson.
\newblock Multi-resolution dynamic mode decomposition for foreground/background
  separation and object tracking.
\newblock In {\em 2015 IEEE International Conference on Computer Vision
  Workshop (ICCVW)}, pages 921--929. IEEE, 2015.

\bibitem{li2004statistical}
Liyuan Li, Weimin Huang, Irene Yu-Hua Gu, and Qi~Tian.
\newblock Statistical modeling of complex backgrounds for foreground object
  detection.
\newblock {\em IEEE Transactions on image processing}, 13(11):1459--1472, 2004.

\bibitem{li2013some}
Na~Li, Stefan Kindermann, and Carmeliza Navasca.
\newblock Some convergence results on the regularized alternating least-squares
  method for tensor decomposition.
\newblock {\em Linear Algebra and its Applications}, 438(2):796--812, 2013.

\bibitem{li2022tensor}
Zina Li, Yao Wang, Qian Zhao, Shijun Zhang, and Deyu Meng.
\newblock A tensor-based online rpca model for compressive background
  subtraction.
\newblock {\em IEEE Transactions on Neural Networks and Learning Systems},
  2022.

\bibitem{luo2023multidimensional}
Qilun Luo, Ming Yang, Wen Li, and Mingqing Xiao.
\newblock Multidimensional data processing with bayesian inference via
  structural block decomposition.
\newblock {\em IEEE Transactions on Cybernetics}, 2023.

\bibitem{maddalena2015towards}
Lucia Maddalena and Alfredo Petrosino.
\newblock Towards benchmarking scene background initialization.
\newblock In {\em New Trends in Image Analysis and Processing--ICIAP 2015
  Workshops: ICIAP 2015 International Workshops, BioFor, CTMR, RHEUMA, ISCA,
  MADiMa, SBMI, and QoEM, Genoa, Italy, September 7-8, 2015, Proceedings 18},
  pages 469--476. Springer, 2015.

\bibitem{mesner1990association}
Dale~M Mesner and Prabir Bhattacharya.
\newblock Association schemes on triples and a ternary algebra.
\newblock {\em Journal of Combinatorial Theory, Series A}, 55(2):204--234,
  1990.

\bibitem{mesner1994ternary}
Dale~M Mesner and Prabir Bhattacharya.
\newblock A ternary algebra arising from association schemes on triples.
\newblock {\em Journal of algebra}, 164(3):595--613, 1994.

\bibitem{mocks1988topographic}
J~Mocks.
\newblock Topographic components model for event-related potentials and some
  biophysical considerations.
\newblock {\em IEEE transactions on biomedical engineering}, 35(6):482--484,
  1988.

\bibitem{navasca2008swamp}
Carmeliza Navasca, Lieven De~Lathauwer, and Stefan Kindermann.
\newblock Swamp reducing technique for tensor decomposition.
\newblock In {\em 2008 16th European Signal Processing Conference}, pages 1--5.
  IEEE, 2008.

\bibitem{oseledets2011tensor}
Ivan~V Oseledets.
\newblock Tensor-train decomposition.
\newblock {\em SIAM Journal on Scientific Computing}, 33(5):2295--2317, 2011.

\bibitem{redman2021koopman}
William~T Redman.
\newblock On koopman mode decomposition and tensor component analysis.
\newblock {\em Chaos: An Interdisciplinary Journal of Nonlinear Science},
  31(5):051101, 2021.

\bibitem{sobral2015online}
Andrews Sobral, Sajid Javed, Soon Ki~Jung, Thierry Bouwmans, and El-hadi
  Zahzah.
\newblock Online stochastic tensor decomposition for background subtraction in
  multispectral video sequences.
\newblock In {\em Proceedings of the IEEE International Conference on Computer
  Vision Workshops}, pages 106--113, 2015.

\bibitem{miguel2021cloud}
Miguel~\'{A}. $\text{Padri\~{n}\'{a}n}$.
\newblock Cloud video.
\newblock
  \url{https://www.pexels.com/video/white-clouds-on-the-blue-sky-6772574/},
  2021.

\bibitem{siam2022talk}
Fan Tian, Misha~E. Kilmer, Eric~L. Miller, Abani Patra, and Anna Konstorum.
\newblock Approximate tensor bm product decomposition for temporal analysis of
  third-order data.
\newblock \url{https://meetings.siam.org/sess/dsp_talk.cfm?p=122285}, 9 2022.

\bibitem{tian2005robust}
Ying-Li Tian, Max Lu, and Arun Hampapur.
\newblock Robust and efficient foreground analysis for real-time video
  surveillance.
\newblock In {\em 2005 IEEE Computer Society Conference on Computer Vision and
  Pattern Recognition (CVPR'05)}, volume~1, pages 1182--1187. IEEE, 2005.

\bibitem{tucker1966some}
Ledyard~R Tucker.
\newblock Some mathematical notes on three-mode factor analysis.
\newblock {\em Psychometrika}, 31(3):279--311, 1966.

\bibitem{uschmajew2012local}
Andr{\'e} Uschmajew.
\newblock Local convergence of the alternating least squares algorithm for
  canonical tensor approximation.
\newblock {\em SIAM Journal on Matrix Analysis and Applications},
  33(2):639--652, 2012.

\bibitem{yang2022global}
Yuning Yang.
\newblock On global convergence of alternating least squares for tensor
  approximation.
\newblock {\em Computational Optimization and Applications}, pages 1--21, 2022.

\bibitem{ZhangGit2014}
Zemin Zhang.
\newblock Tensor completion and tensor rpca.
\newblock
  \url{https://github.com/jamiezeminzhang/Tensor_Completion_and_Tensor_RPCA/tree/master?tab=readme-ov-file},
  2014.

\bibitem{zhang2014novel}
Zemin Zhang, Gregory Ely, Shuchin Aeron, Ning Hao, and Misha Kilmer.
\newblock Novel methods for multilinear data completion and de-noising based on
  tensor-svd.
\newblock In {\em Proceedings of the IEEE conference on computer vision and
  pattern recognition}, pages 3842--3849, 2014.

\end{thebibliography}

\newpage
\appendixpageoff
\appendixtitleoff
\begin{appendices}
\section*{Supplementary material}


\section{ALS Convergence Analysis}
The alternating least-squares algorithm has been widely used for computing the tensor CP decomposition \cite{kolda2009tensor} and the block term decomposition \cite{navasca2008swamp, de2008decompositions} among others. The local and global convergence of the ALS algorithm for the tensor CP decomposition has been studied in \cite{li2013some, uschmajew2012local, yang2022global} based on its connection to the block nonlinear Gauss–Seidel (GS) method. In this section, we will show that the ALS algorithm for computing low BM-rank tensor approximation is also closely connected to the nonlinear GS method, and hence several convergence results follow directly from its framework. 

Recall the nonlinear GS method solves the following minimization problem
\begin{equation*}
\begin{split}
    \min \ & f\left(\V{x}\right)\\
    \text{subject to } &\V{x}\in X=X_{1}\times X_{2}\times\cdots\times X_{M}\subset\mathbb{R}^{N\times 1},
\end{split}
\end{equation*}
where $f$ is a continuously differentiable function from $\mathbb{R}^{N\times 1}$ to $\mathbb{R}$ and $X$ is a Cartesian product of closed, nonempty and convex subsets $X_{i}\subset\mathbb{R}^{N_{i}\times 1}$, for $i=1,\dots,M$ with $\sum_{i=1}^{M}N_{i}=N$. If the vector $\V{x}\in\mathbb{R}^{N}$ is partitioned into $M$ component vectors $\V{x}_{i}\in\mathbb{R}^{N_{i}\times 1}$, then we can consider $f$ is a function from $\mathbb{R}^{N_{1}\times 1}\times\mathbb{R}^{N_{2}\times 1}\times\cdots\times\mathbb{R}^{N_{M}\times 1}$ to $\mathbb{R}$ with $f\left(\V{x}\right)=f\left(\V{x}_{1},\V{x}_{2},\dots,\V{x}_{M}\right)$.
The solution to the nonlinear optimization function can then be found by the block Gauss-Seidel method via the following iteration in a cyclic order,
\begin{equation*}
    \V{x}_{i}^{k+1}=\underset{\V{y}_{i}\in X_{i}}{ \min}f\left(\V{x}_{1}^{k+1},\dots,\V{x}_{i-1}^{k+1},\V{y}_{i},\V{x}_{i+1}^{k},\dots,\V{x}_{M}^{k}\right),
\end{equation*}
which updates the components of $\Vx$. The iterative technique starts from a given initial guess $\Vx^{0} = \left(\Vx_1^{0},\Vx_2^{0},\dots, \Vx_M^{0}\right)$ and generates a sequence $\{\Vx^{k}\} = \left \{\left(\Vx_1^{k},\Vx_2^{k},\dots, \Vx_M^{k}\right) \right\}$.

The connection between the nonlinear block Gauss-Seidel method and the ALS-BMD algorithm is made evident by noting the cost function we want to minimize,
\begin{equation*}
    \left\Vert \T{X}-\hat{\T{X}}\right\Vert _{F}^{2}=\sum_{i,j,k}\left(\T{X}_{i,j,k}-\sum_{t=1}^{\ell} \T{A}_{i,t,k} \T{B}_{i,j,t} \T{C}_{t,j,k} \right)^{2}=f\left(\T{A},\T{B},\T{C}\right),
\end{equation*}
is a function $f:\mathbb{R}^{\left(mn+mp+pn\right)\ell \times 1}\rightarrow\mathbb{R}$. By letting $\V{v}=\left[\V{a};\V{b};\V{c}\right]\in\mathbb{R}^{\left(mn+mp+pn\right)\ell \times 1}$ where $\V{a},\V{b}$ and $\V{c}$ are the vectorized factor tensors, then we can see that
\begin{equation*}
    f\left(\V{v}\right)=f\left(\T{A},\T{B},\T{C}\right).
\end{equation*}
The BMD problem therefore can be reformulated into the following problem
\begin{equation*}
    \begin{split}
        \min \ & f\left(\V{v}\right) \\
\text{subject to } &	\V{v}\in\mathbb{R}^{mn\ell\times 1}\times\mathbb{R}^{mp\ell\times 1}\times \mathbb{R}^{pn\ell\times 1}.
    \end{split}
\end{equation*}
The ALS algorithm updates the components of $\V{v}$ by
\begin{equation*}
    \begin{split}
        \V{b}^{k+1} &= \underset{\V{y}\in\mathbb{R}^{mp\ell\times 1}}{ \min}f\left(\V{a}^{k},\V{y},\V{c}^{k}\right),\\
        \V{c}^{k+1} &= \underset{\V{y}\in\mathbb{R}^{pn\ell\times 1}}{ \min}f\left(\V{a}^{k},\V{b}^{k+1},\V{y}\right),\\
        \V{a}^{k+1} &= \underset{\V{y}\in\mathbb{R}^{mn\ell\times 1}}{\min}f\left(\V{y},\V{b}^{k+1},\V{c}^{k+1}\right).
    \end{split}
\end{equation*}
This is exactly the nonlinear block Gauss-Seidel method.

By Proposition 2.1 in \cite{yang2022global}, let $\Vv^{k}$ denote the $k$-th solution vector, i.e. $\Vv^{k}=\left(\Va^{k},\Vb^{k}, \Vc^{k}\right)$. When the normal equations matrix in each of the linear least-squares subproblems given in Eq.~(6.3), Eq.~(6.5), and Eq.~(6.6) in the main text is positive definite, i.e. $\M{H}^{\top}\M{H}\succ 0$ for any $\M{H}\in\{\M{H}_{\TA\TC}, \M{H}_{\TB\TA}, \M{H}_{\TC\TB}\}$, then each least-squares subproblem has a unique solution and the whole sequence $\{\Vv^k\}$ generated by ALS converges to a limit point. In particular,
\begin{equation*}
    f\left(\Vv^{k+1}\right) \leq f\left(\Va^{k}, \Vb^{k+1}, \Vc^{k+1}\right) \leq f\left(\Va^{k}, \Vb^{k+1}, \Vc^{k}\right) \leq f\left(\Vv^{k}\right) \leq \cdots \leq f\left(\Vv^{0}\right),
\end{equation*}
shows that ALS monotonically reduces the cost function. Since $f$ is bounded below, $\{f(\Vv^k)\}$ has a limit point $f^* \geq 0$. Moreover, by Theorem 3.1 in  \cite{yang2022global}, if $\{\V{v}^{k}\}$ is bounded, then the limit point of the sequence is a stationary point of the problem.

\subsection{Regularized ALS Convergence Analysis}
Convergence analysis for the ALS algorithm with Tikhonov regularization has been studied for the CP decomposition \cite{karim2019tensor}. We note that a similar analysis can be derived for the regularized ALS algorithm for BM-decomposition given in Section.~7. 

Given a third order data tensor $\T{T}\in\mathbb{R}^{m\times p\times n}$, our goal is to find a tensor BM-decomposition $\T{A}\in\mathbb{R}^{m\times\ell\times n}$, $\T{B}\in\mathbb{R}^{m\times p\times\ell}$, and $\T{C}\in\mathbb{R}^{\ell\times p\times n}$ from the constrained problem
\begin{equation}\label{eq:tikonov-bmd}
    \min_{\T{A},\T{B},\T{C}}\left\Vert \T{X}-\text{bmp}\left(\T{A},\T{B},\T{C}\right)\right\Vert _{F}^{2}+\frac{1}{2}\left(\left\Vert \M{L}_{a}\left(\T{A}\right)\right\Vert _{F}^{2}+\left\Vert \M{L}_{b}\left(\T{B}\right)\right\Vert _{F}^{2}+\left\Vert \M{L}_{c}\left(\T{C}\right)\right\Vert _{F}^{2}\right),
\end{equation}
where $\M{L}_a$, $\M{L}_b$, and $\M{L}_c$ are linear operators acting on the vectorized factor tensors $\T{A}$, $\T{B}$, and
$\T{C}$ respectively. More specifically, for all $1\leq i\leq m, 1\leq j\leq p, 1\leq k\leq n$, $\M{L}_{a} = \underset{k,i}{\oplus}\M{L}$ where $\M{L} = \diag([\lambda_1, \lambda_2, \dots, \lambda_{\ell}])$, $\M{L}_{b} = \underset{i,j}{\oplus} \beta \M{I}_{\ell\times \ell}$ and $\M{L}_{c} = \underset{j,k}{\oplus} \gamma\M{I}_{\ell\times \ell}$. 

Then the regularized ALS subproblems are given by
\begin{equation*}
    \hat{\T{B}}\left[i,j,:\right]=\min_{\V{b}^{\left(i,j\right)}\in\mathbb{R}^{\ell\times1}}\left\Vert \V{y}_{\T{T}}^{\left(i,j\right)}-\M{H}_{\T{A}\T{C}}^{\left(i,j\right)}\V{b}^{\left(i,j\right)}\right\Vert _{F}^{2}+\frac{1}{2}\left\Vert \M{L}_{b}^{\left(i,j\right)}\V{b}^{\left(i,j\right)}\right\Vert _{F}^{2},
\end{equation*}
\begin{equation*}
    \hat{\T{C}}^{\top}\left[j,k,:\right]=\min_{\V{c}^{\left(i,j\right)}\in\mathbb{R}^{\ell\times1}}\left\Vert \V{y}_{\T{T}^{\top}}^{\left(j,k\right)}-\M{H}_{\T{B}\T{A}}^{\left(j,k\right)}\V{c}^{\left(j,k\right)}\right\Vert _{F}^{2}+\frac{1}{2}\left\Vert \M{L}_{c}^{\left(j,k\right)}\V{c}^{\left(j,k\right)}\right\Vert _{F}^{2},
\end{equation*}
\begin{equation*}
    \hat{\T{A}}^{\top^{2}}\left[k,i,:\right]=\min_{\V{a}^{\left(i,j\right)}\in\mathbb{R}^{\ell\times1}}\left\Vert \V{y}_{\T{T}^{\top^{2}}}^{\left(k,i\right)}-\M{H}_{\T{C}\T{B}}^{\left(k,i\right)}\V{a}^{\left(i,j\right)}\right\Vert _{F}^{2}+\frac{1}{2}\left\Vert \M{L}_{a}^{\left(k,i\right)}\V{a}^{\left(k,i\right)}\right\Vert _{F}^{2},
\end{equation*}
for all $0\leq i<m$, $0\leq j<n$, and $0\leq k<p$.
Let $\Psi$ represents the objective function in Eq.~\ref{eq:tikonov-bmd}, then $\Psi:\mathbb{R}^{\ell\left(mp+mn+np\right)}\rightarrow\mathbb{R}^{+}$, where 
\begin{equation}\label{eq:tikhonov-obj}
    \Psi\left(\T{A},\T{B},\T{C}\right)=f\left(\T{A},\T{B},\T{C}\right)+\frac{1}{2}\left(\left\Vert \M{L}_{a}\left(\T{A}\right)\right\Vert _{F}^{2}+\left\Vert \M{L}_{b}\left(\T{B}\right)\right\Vert _{F}^{2}+\left\Vert \M{L}_{c}\left(\T{C}\right)\right\Vert _{F}^{2}\right)
\end{equation}

We will first show that the objective functions in the subproblems are $\mu$-strongly convex.

\begin{definition} A differentiable function $h:\text{dom}\left(h\right)\rightarrow\mathbb{R}$, where $\text{dom}\left(h\right)\subseteq\mathbb{R}^{n}$ is called $\mu$-strongly convex if there exists a constant $\mu>0$ such that 
\begin{equation*}
    h\left(y\right)\geq h\left(x\right)+\nabla h\left(x\right)^{\top}\left(y-x\right)+\frac{\mu}{2}\left\Vert y-x\right\Vert _{2}^{2},\forall\,x,y\in\text{dom}\left(h\right).
\end{equation*}
\end{definition}
\begin{theorem}
The objective function 
\begin{equation*}
f\left(\V{a},\V{b}^{k},\V{c}^{k}\right)+\frac{1}{2}\left\Vert \M{L}_{a}\V{a}\right\Vert _{F}^{2}
\end{equation*}
is $\mu$-strongly convex.
\end{theorem}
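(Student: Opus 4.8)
The plan is to exploit that, with $\V{b}^{k}$ and $\V{c}^{k}$ held fixed, the map $h(\V{a}) := f(\V{a},\V{b}^{k},\V{c}^{k}) + \tfrac{1}{2}\|\M{L}_{a}\V{a}\|_{F}^{2}$ is a \emph{quadratic} function of $\V{a}$, so that its Hessian is a constant symmetric matrix and $\mu$-strong convexity is equivalent to that Hessian dominating $\mu\M{I}$. First I would invoke the reduction carried out in Section~\ref{subsec:phase2}: by Eq.~(\ref{eq:updateA}), holding $\T{B}$ and $\T{C}$ fixed yields $f(\V{a},\V{b}^{k},\V{c}^{k}) = \|\V{y}_{\T{T}^{\top^{2}}} - \M{H}_{\T{C},\T{B}}\,\V{a}\|_{2}^{2}$, where $\M{H}_{\T{C},\T{B}} = \mat(\T{C}^{\top^{2}},\T{B}^{\top^{2}})$ is built entirely from $\V{b}^{k}$ and $\V{c}^{k}$ and does not depend on $\V{a}$. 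Hence $\V{a}\mapsto f(\V{a},\V{b}^{k},\V{c}^{k})$ is a convex quadratic with gradient $-2\,\M{H}_{\T{C},\T{B}}^{\top}\big(\V{y}_{\T{T}^{\top^{2}}} - \M{H}_{\T{C},\T{B}}\V{a}\big)$ and constant Hessian $2\,\M{H}_{\T{C},\T{B}}^{\top}\M{H}_{\T{C},\T{B}} \succeq 0$.

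Next I would treat the regularizer: $\tfrac{1}{2}\|\M{L}_{a}\V{a}\|_{F}^{2} = \tfrac{1}{2}\V{a}^{\top}\M{L}_{a}^{\top}\M{L}_{a}\V{a}$ is a quadratic with Hessian $\M{L}_{a}^{\top}\M{L}_{a}$, and since $\M{L}_{a} = \bigoplus_{k,i}\M{L}$ with $\M{L} = \diag([\lambda_{1},\dots,\lambda_{\ell}])$ and every $\lambda_{t} > 0$, we obtain $\M{L}_{a}^{\top}\M{L}_{a} = \bigoplus_{k,i}\diag([\lambda_{1}^{2},\dots,\lambda_{\ell}^{2}]) \succeq \mu\M{I}$ with $\mu := \min_{1\le t\le\ell}\lambda_{t}^{2} > 0$. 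Consequently $h$ is a quadratic whose constant Hessian $\MQ := 2\,\M{H}_{\T{C},\T{B}}^{\top}\M{H}_{\T{C},\T{B}} + \M{L}_{a}^{\top}\M{L}_{a}$ satisfies $\MQ \succeq \M{L}_{a}^{\top}\M{L}_{a} \succeq \mu\M{I}$. Because $h$ is quadratic its second-order Taylor expansion is exact, so for all $\V{x},\V{y}$ we have $h(\V{y}) = h(\V{x}) + \nabla h(\V{x})^{\top}(\V{y}-\V{x}) + \tfrac{1}{2}(\V{y}-\V{x})^{\top}\MQ(\V{y}-\V{x}) \ge h(\V{x}) + \nabla h(\V{x})^{\top}(\V{y}-\V{x}) + \tfrac{\mu}{2}\|\V{y}-\V{x}\|_{2}^{2}$, which is precisely the defining inequality of $\mu$-strong convexity.

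I do not expect a genuine obstacle here; the only point that deserves a sentence is that the uniform lower bound on $\MQ$ comes entirely from the Tikhonov term, so it is valid regardless of whether $\M{H}_{\T{C},\T{B}}$ has full column rank --- recall the discussion after Algorithm~\ref{alg:bmp-als} that the coefficient matrices are generically rank-deficient --- and hence the modulus $\mu = \min_{t}\lambda_{t}^{2}$ can be taken uniformly over all ALS iterates. The same argument applies verbatim to the subproblems for $\V{b}$ and $\V{c}$, where $\M{L}_{b} = \bigoplus_{i,j}\beta\M{I}$ and $\M{L}_{c} = \bigoplus_{j,k}\gamma\M{I}$ give strong-convexity moduli $\beta^{2}$ and $\gamma^{2}$ respectively.
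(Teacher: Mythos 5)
Your proof is correct and follows essentially the same route as the paper's: split the objective into the convex least-squares term and the Tikhonov term, and draw the strong-convexity modulus from $\M{L}_{a}^{\top}\M{L}_{a}\succeq\mu\M{I}$. If anything, your version is slightly more careful than the paper's, which only asserts that $\M{L}_{a}$ has non-negative diagonal entries before concluding $\succcurlyeq\mu\M{I}$; your explicit $\mu=\min_{t}\lambda_{t}^{2}$ with the (needed) assumption $\lambda_{t}>0$ makes that step precise.
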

\begin{proof}
    The least-squares problem $f\left(\V{a},\V{b}^{k},\V{c}^{k}\right)=\left\Vert \V{y}_{\T{T}^{\top^{2}}}-\M{H}_{\T{C}\T{B}}\V{a}\right\Vert _{F}^{2}$ is convex. The regularization term $\frac{1}{2}\left\Vert \M{L}_{a}\V{a}\right\Vert _{F}^{2}$ has the following first derivative
    \begin{equation*}
        \frac{\partial}{\partial\V{a}}\left\{ \frac{1}{2}\left\Vert \M{L}_{a}\V{a}\right\Vert _{F}^{2}\right\} =\M{L}_{a}^{\top}\M{L}_{a}\V{a}
    \end{equation*}
    and second derivative
    \begin{equation*}
        \frac{\partial^{2}}{\partial\V{a}\partial\V{a}^{\top}}\left\{ \frac{1}{2}\left\Vert \M{L}_{a}\V{a}\right\Vert _{F}^{2}\right\} =\frac{\partial}{\partial\V{a}}\left\{ \M{L}_{a}^{\top}\M{L}_{a}\V{a}\right\} =\M{L}_{a}^{\top}\M{L}_{a},
    \end{equation*}
which is positive-semidefinite, since $\M{L}_{a}$ is a direct sum of $mn$ diagonal matrices with non-negative entries. So there exists a constant $\mu>0$ such that $\frac{\partial^{2}}{\partial\V{a}^{2}}\left\{ \frac{1}{2}\left\Vert \M{L}_{a}\V{a}\right\Vert _{F}^{2}\right\} \succcurlyeq\mu\M{I}$. 

A linear combination of a convex and a $\mu$-strongly convex function is also $\mu$-strongly convex. Hence, the objective function of the subproblem with respect to $\T{A}$ is $\mu$-strongly convex. 
\end{proof}

Similarly, the objective function of the subproblems with respect to $\T{B}$ and $\T{C}$ are also $\mu$-strongly convex. 

\begin{lemma}
    Suppose $\V{a}^{k+1}$ is obtained by solving the subproblem have decrease in the objective function after a single update of $\V{a}$, i.e.
\begin{equation*}
    \Psi\left(\mathbf{a}^{k}\right)-\Psi\left(\mathbf{a}^{k+1}\right)\geq\frac{\mu}{2}\left\Vert \mathbf{a}^{k}-\mathbf{a}^{k+1}\right\Vert _{2}^{2}
\end{equation*}
for some constant $\mu>0$.
\end{lemma}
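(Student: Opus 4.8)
The plan is to deduce the lemma directly from the $\mu$-strong convexity of the $\mathbf{a}$-subproblem objective established in the preceding theorem, combined with the first-order optimality condition at its minimizer. Write $h(\mathbf{a}) := f(\mathbf{a},\mathbf{b}^{k+1},\mathbf{c}^{k+1}) + \frac{1}{2}\|\M{L}_{a}\mathbf{a}\|_{F}^{2}$ for the objective that is solved to produce $\mathbf{a}^{k+1}$, with the $\mathbf{b}$- and $\mathbf{c}$-blocks frozen at their current iterates. This $h$ is smooth (a sum of squares plus a quadratic), and by the preceding theorem it is $\mu$-strongly convex; crucially the constant $\mu$ there arises from a lower bound on the Hessian of the regularization term alone, so it may be taken as $\mu = \min_{t}\lambda_{t}^{2}>0$, which is independent of $k$. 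Strong convexity together with smoothness guarantees that $\mathbf{a}^{k+1} = \arg\min_{\mathbf{a}} h(\mathbf{a})$ exists, is unique, and satisfies the stationarity condition $\nabla h(\mathbf{a}^{k+1}) = 0$.

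Next I would apply the defining inequality of $\mu$-strong convexity with $x = \mathbf{a}^{k+1}$ and $y = \mathbf{a}^{k}$:
\[
h(\mathbf{a}^{k}) \;\geq\; h(\mathbf{a}^{k+1}) + \nabla h(\mathbf{a}^{k+1})^{\top}\bigl(\mathbf{a}^{k}-\mathbf{a}^{k+1}\bigr) + \frac{\mu}{2}\bigl\|\mathbf{a}^{k}-\mathbf{a}^{k+1}\bigr\|_{2}^{2}.
\]
Since $\nabla h(\mathbf{a}^{k+1}) = 0$, the linear term vanishes and we obtain $h(\mathbf{a}^{k}) - h(\mathbf{a}^{k+1}) \geq \frac{\mu}{2}\|\mathbf{a}^{k}-\mathbf{a}^{k+1}\|_{2}^{2}$.

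Finally I would transfer this bound from $h$ to $\Psi$. During the $\mathbf{a}$-update only the $\mathbf{a}$-block changes, and as a function of $\mathbf{a}$ alone the objective $\Psi(\cdot,\mathbf{b}^{k+1},\mathbf{c}^{k+1})$ differs from $h$ only by the additive constant $\frac{1}{2}\bigl(\|\M{L}_{b}\mathbf{b}^{k+1}\|_{F}^{2}+\|\M{L}_{c}\mathbf{c}^{k+1}\|_{F}^{2}\bigr)$, which cancels in the difference $\Psi(\mathbf{a}^{k})-\Psi(\mathbf{a}^{k+1})$. Hence $\Psi(\mathbf{a}^{k})-\Psi(\mathbf{a}^{k+1}) = h(\mathbf{a}^{k})-h(\mathbf{a}^{k+1}) \geq \frac{\mu}{2}\|\mathbf{a}^{k}-\mathbf{a}^{k+1}\|_{2}^{2}$, which is the claim.

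There is no substantial obstacle here; the two points that need care are (i) reading the notation of the statement correctly, namely that $\Psi(\mathbf{a}^{k})$ and $\Psi(\mathbf{a}^{k+1})$ denote $\Psi$ evaluated with the other two blocks frozen at their current values, so that the displayed decrease is genuinely a single-block descent step, and (ii) justifying $\nabla h(\mathbf{a}^{k+1})=0$, which uses smoothness of $h$ and the fact that the global minimizer of a strongly convex function is attained in the interior. The identical argument, invoking the $\mu$-strong convexity of the $\mathbf{b}$- and $\mathbf{c}$-subproblems noted just after the theorem, yields the analogous sufficient-decrease inequalities for those updates; chaining the three of them gives a one-iteration descent of $\Psi$, which is the mechanism underlying the asserted convergence to a critical point of $\Psi$.
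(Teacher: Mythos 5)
Your proposal matches the paper's own argument: both invoke the first-order optimality condition at the block minimizer $\mathbf{a}^{k+1}$ to kill the linear term in the $\mu$-strong-convexity inequality, yielding the sufficient-decrease bound (your extra step of passing through the subproblem objective $h$ and noting that the frozen $\mathbf{b}$-, $\mathbf{c}$-regularization terms cancel is exactly what the paper leaves implicit by writing $\Psi(\mathbf{a}^{k})$ with the other blocks frozen). Your observation that $\mu$ can be taken as $\min_t \lambda_t^2$, independent of $k$, is a welcome sharpening of the paper's slightly loose "positive-semidefinite, so there exists $\mu>0$" phrasing, but the proof route is the same.
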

\begin{proof}
    By the first order optimality, we have
    \begin{equation*}
        \nabla_{a}\Psi\left(\mathbf{a}^{k+1}\right)=\nabla_{a}f\left(\mathbf{a}^{k+1}\right)+\mathbf{L}_{a}^{\top}\mathbf{L}_{a}\mathbf{a}=0.
    \end{equation*}
    Then the strongly convexity of the objective function yields
    \begin{equation*}
        \Psi\left(\mathbf{a}^{k}\right)-\Psi\left(\mathbf{a}^{k+1}\right)\geq\nabla_{a}\Psi\left(\mathbf{a}^{k+1}\right)^{\top}\left(\mathbf{a}^{k}-\mathbf{a}^{k+1}\right)+\frac{\mu}{2}\left\Vert \mathbf{a}^{k}-\mathbf{a}^{k+1}\right\Vert _{F}^{2}\geq\frac{\mu}{2}\left\Vert \mathbf{a}^{k}-\mathbf{a}^{k+1}\right\Vert _{2}^{2}.
    \end{equation*}
\end{proof}
Similar results hold for $\mathbf{b}$ and $\mathbf{c}$. 

Next we want to show that $\Psi$ decreases monotonically at each iteration. 
\begin{theorem}\label{thm:decrease}
    (Sufficiently decrease property) Let $\Psi$ represent the objective function in Eq.~\ref{eq:tikhonov-obj}, let $\boldsymbol{\omega}^{k}=\left(\mathbf{a}^{k},\mathbf{b}^{k},\mathbf{c}^{k}\right)$, then we have
    \begin{equation*}
        \Psi\left(\boldsymbol{\omega}^{k}\right)-\Psi\left(\boldsymbol{\omega}^{k+1}\right)\geq\rho\left\Vert \boldsymbol{\omega}^{k}-\boldsymbol{\omega}^{k+1}\right\Vert _{2}^{2}
    \end{equation*}
    for some constant $\rho>0$. In addition, we have
    \begin{equation*}
        \sum_{k=0}^{\infty}\left\Vert \boldsymbol{\omega}^{k}-\boldsymbol{\omega}^{k+1}\right\Vert _{2}^{2}<\infty.
    \end{equation*}
\end{theorem}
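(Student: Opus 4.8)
The plan is to decompose one ALS sweep $\boldsymbol{\omega}^{k}\to\boldsymbol{\omega}^{k+1}$ into its three constituent block updates and apply the preceding Lemma (and its stated analogues for $\mathbf{b}$ and $\mathbf{c}$) to each one, then telescope. Following the update order used in Section~7 (update $\mathbf{b}$, then $\mathbf{c}$, then $\mathbf{a}$), I would write the sweep as the chain
$(\mathbf{a}^{k},\mathbf{b}^{k},\mathbf{c}^{k})\to(\mathbf{a}^{k},\mathbf{b}^{k+1},\mathbf{c}^{k})\to(\mathbf{a}^{k},\mathbf{b}^{k+1},\mathbf{c}^{k+1})\to(\mathbf{a}^{k+1},\mathbf{b}^{k+1},\mathbf{c}^{k+1})$,
where each arrow changes exactly one block and that block is the exact minimizer of the corresponding regularized subproblem. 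Since each subproblem objective is $\mu$-strongly convex in the block being optimized (with the modulus coming from $\mathbf{L}_{a}^{\top}\mathbf{L}_{a}$, $\mathbf{L}_{b}^{\top}\mathbf{L}_{b}$, $\mathbf{L}_{c}^{\top}\mathbf{L}_{c}$ being positive definite, hence a constant independent of $k$), the Lemma applied at each arrow gives $\Psi(\text{before})-\Psi(\text{after})\ge\frac{\mu_{\bullet}}{2}\|\cdot\|_{2}^{2}$ for the block that changed.

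Adding the three inequalities, the left-hand sides telescope to $\Psi(\boldsymbol{\omega}^{k})-\Psi(\boldsymbol{\omega}^{k+1})$, while the right-hand side is $\frac{1}{2}(\mu_{b}\|\mathbf{b}^{k}-\mathbf{b}^{k+1}\|_{2}^{2}+\mu_{c}\|\mathbf{c}^{k}-\mathbf{c}^{k+1}\|_{2}^{2}+\mu_{a}\|\mathbf{a}^{k}-\mathbf{a}^{k+1}\|_{2}^{2})$. Taking $\mu=\min\{\mu_{a},\mu_{b},\mu_{c}\}>0$ and using $\|\boldsymbol{\omega}^{k}-\boldsymbol{\omega}^{k+1}\|_{2}^{2}=\|\mathbf{a}^{k}-\mathbf{a}^{k+1}\|_{2}^{2}+\|\mathbf{b}^{k}-\mathbf{b}^{k+1}\|_{2}^{2}+\|\mathbf{c}^{k}-\mathbf{c}^{k+1}\|_{2}^{2}$ yields $\Psi(\boldsymbol{\omega}^{k})-\Psi(\boldsymbol{\omega}^{k+1})\ge\rho\|\boldsymbol{\omega}^{k}-\boldsymbol{\omega}^{k+1}\|_{2}^{2}$ with $\rho=\mu/2$, which is the sufficient-decrease bound.

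For the summability statement I would sum this inequality over $k=0,1,\dots,N$. Telescoping the left-hand side gives $\rho\sum_{k=0}^{N}\|\boldsymbol{\omega}^{k}-\boldsymbol{\omega}^{k+1}\|_{2}^{2}\le\Psi(\boldsymbol{\omega}^{0})-\Psi(\boldsymbol{\omega}^{N+1})\le\Psi(\boldsymbol{\omega}^{0})$, where the last step uses that $\Psi$ is nonnegative (it is a sum of squared Frobenius norms). Since the partial sums are bounded above by $\Psi(\boldsymbol{\omega}^{0})/\rho<\infty$ and are monotone, letting $N\to\infty$ gives $\sum_{k=0}^{\infty}\|\boldsymbol{\omega}^{k}-\boldsymbol{\omega}^{k+1}\|_{2}^{2}<\infty$.

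The point requiring the most care is the uniformity of the constant $\mu$: it must be one fixed number valid for every iteration $k$ and simultaneously for all three subproblems. This is precisely where the Tikhonov regularization is essential — without it the unregularized Hessian $\mathbf{H}^{\top}\mathbf{H}$ can be singular (exactly the rank-deficiency issue flagged for unregularized BMD-ALS), so the per-step descent constant could degenerate; with the regularization each subproblem Hessian satisfies $\mathbf{H}^{\top}\mathbf{H}+\mathbf{L}_{\bullet}^{\top}\mathbf{L}_{\bullet}\succeq\mathbf{L}_{\bullet}^{\top}\mathbf{L}_{\bullet}\succeq\mu_{\bullet}\mathbf{I}$ with $\mu_{\bullet}$ depending only on the fixed parameters $\lambda_{1},\dots,\lambda_{\ell},\beta,\gamma$. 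I would also note explicitly that invoking the Lemma at the intermediate "mixed-index" iterates in the chain is legitimate, since its proof used only first-order optimality of the single-block update and strong convexity in that block, neither of which depends on the values at which the remaining blocks are frozen.
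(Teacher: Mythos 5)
Your proposal is correct and follows essentially the same route as the paper's proof: apply the per-block strong-convexity decrease lemma to each of the three block updates in one sweep, sum to get $\Psi(\boldsymbol{\omega}^{k})-\Psi(\boldsymbol{\omega}^{k+1})\geq\frac{1}{2}\bigl(\mu_{a}\|\mathbf{a}^{k}-\mathbf{a}^{k+1}\|_{2}^{2}+\mu_{b}\|\mathbf{b}^{k}-\mathbf{b}^{k+1}\|_{2}^{2}+\mu_{c}\|\mathbf{c}^{k}-\mathbf{c}^{k+1}\|_{2}^{2}\bigr)$, and then telescope over $k$ using boundedness of $\Psi$ from below to get summability. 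Your added remarks on the uniformity of the strong-convexity constant and on the legitimacy of invoking the lemma at the intermediate mixed-index iterates simply make explicit details the paper leaves implicit.
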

\begin{proof}
    By the previous result in 2, we have 
    \begin{equation*}
        \Psi\left(\mathbf{\boldsymbol{\omega}}^{k}\right)-\Psi\left(\boldsymbol{\omega}^{k+1}\right)\geq\frac{1}{2}\left\{ \mu_{1}\left\Vert \mathbf{a}^{k}-\mathbf{a}^{k+1}\right\Vert _{2}^{2}+\mu_{2}\left\Vert \mathbf{b}^{k}-\mathbf{b}^{k+1}\right\Vert _{2}^{2}+\mu_{3}\left\Vert \mathbf{c}^{k}-\mathbf{c}^{k+1}\right\Vert _{2}^{2}\right\} ,
    \end{equation*}
    Then
    \begin{equation*}
        \lim_{n\rightarrow\infty}\sum_{k=0}^{n-1}\left\Vert \boldsymbol{\omega}^{k}-\boldsymbol{\omega}^{k+1}\right\Vert _{2}^{2}\leq\lim_{n\rightarrow\infty}\frac{1}{\rho}\left(\Psi\left(\mathbf{\boldsymbol{\omega}}^{0}\right)-\Psi\left(\boldsymbol{\omega}^{n}\right)\right)<\infty
    \end{equation*}
    \begin{equation*}
        \implies\sum_{k=0}^{\infty}\left\Vert \boldsymbol{\omega}^{k}-\boldsymbol{\omega}^{k+1}\right\Vert _{2}^{2}<\infty.
    \end{equation*}
\end{proof}
\begin{remark}
    The sequence $\left\{ \boldsymbol{\omega}^{k}\right\} _{k\in\mathbb{N}}$ is bounded, since the regularization terms in the objective function $\Psi$ bounds the blocks $\mathbf{a},\mathbf{b}$, and $\mathbf{c}$, and $\Psi\left(\boldsymbol{\omega}^{k}\right)$ is non-increasing. 
\end{remark}

\begin{theorem}\label{thm:re_cond}
Let $\left\{ \boldsymbol{\omega}^{k}\right\} _{k\in\mathbb{N}}$ be the sequence generated by our regularized ALS algorithm, then there exists a constant $\nu>0$ such that for any $k\in\mathbb{N}$, there is a vector $\eta^{k+1}\in\partial\Psi\left(\boldsymbol{\omega}^{k+1}\right)$ such that
\begin{equation*}
    \left\Vert \eta^{k+1}\right\Vert \leq\nu\left\Vert \boldsymbol{\omega}^{k}-\boldsymbol{\omega}^{k+1}\right\Vert 
\end{equation*}
\end{theorem}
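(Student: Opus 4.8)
The plan is to use smoothness of $\Psi$ together with the exact first-order optimality of each of the three block updates, i.e.\ the standard ``relative error'' step of Kurdyka--\L ojasiewicz-type convergence proofs. Since $\Psi$ defined in (\ref{eq:tikhonov-obj}) is a polynomial in the entries of $\mathbf{a},\mathbf{b},\mathbf{c}$ (degree six in the fidelity term, quadratic in the regularizers), it is $C^{\infty}$, so $\partial\Psi(\boldsymbol{\omega})=\{\nabla\Psi(\boldsymbol{\omega})\}$ everywhere; hence the only admissible choice is $\eta^{k+1}:=\nabla\Psi(\boldsymbol{\omega}^{k+1})$, and the task reduces to bounding $\|\nabla\Psi(\boldsymbol{\omega}^{k+1})\|$. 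Writing $\nabla\Psi=(\nabla_{a}\Psi,\nabla_{b}\Psi,\nabla_{c}\Psi)$ and recalling that the updates are performed in the order $\mathbf{b}$, then $\mathbf{c}$, then $\mathbf{a}$, the fact that each subproblem is solved exactly gives the stationarity conditions
\[
\nabla_{b}\Psi(\mathbf{a}^{k},\mathbf{b}^{k+1},\mathbf{c}^{k})=0,\qquad
\nabla_{c}\Psi(\mathbf{a}^{k},\mathbf{b}^{k+1},\mathbf{c}^{k+1})=0,\qquad
\nabla_{a}\Psi(\mathbf{a}^{k+1},\mathbf{b}^{k+1},\mathbf{c}^{k+1})=0,
\]
the last of which is exactly $\nabla_{a}\Psi(\boldsymbol{\omega}^{k+1})=0$, so only the $\mathbf{b}$- and $\mathbf{c}$-blocks of $\nabla\Psi(\boldsymbol{\omega}^{k+1})$ need to be estimated.

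Next I would fix a uniform Lipschitz constant for $\nabla\Psi$. By the Remark following Theorem~\ref{thm:decrease}, the sequence $\{\boldsymbol{\omega}^{k}\}$ is bounded; moreover the ``mixed'' points $(\mathbf{a}^{k},\mathbf{b}^{k+1},\mathbf{c}^{k})$ and $(\mathbf{a}^{k},\mathbf{b}^{k+1},\mathbf{c}^{k+1})$ appearing above have all of their blocks drawn from the bounded sets $\{\mathbf{a}^{k}\}$, $\{\mathbf{b}^{k}\}$, $\{\mathbf{c}^{k}\}$, so they all lie in a single bounded convex set $\mathcal{B}$. Since $\Psi$ is $C^{\infty}$, its Hessian is bounded on $\overline{\mathcal{B}}$, hence $\nabla\Psi$ is Lipschitz on $\mathcal{B}$ with some constant $L=L(\mathcal{B})>0$ not depending on $k$.

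Finally I would chain the pieces. Subtracting the first stationarity condition,
\[
\nabla_{b}\Psi(\boldsymbol{\omega}^{k+1})=\nabla_{b}\Psi(\mathbf{a}^{k+1},\mathbf{b}^{k+1},\mathbf{c}^{k+1})-\nabla_{b}\Psi(\mathbf{a}^{k},\mathbf{b}^{k+1},\mathbf{c}^{k}),
\]
and since the two arguments differ only in their $\mathbf{a}$- and $\mathbf{c}$-components, $\|\nabla_{b}\Psi(\boldsymbol{\omega}^{k+1})\|\le L\,\|\boldsymbol{\omega}^{k+1}-\boldsymbol{\omega}^{k}\|$; subtracting the second,
\[
\nabla_{c}\Psi(\boldsymbol{\omega}^{k+1})=\nabla_{c}\Psi(\mathbf{a}^{k+1},\mathbf{b}^{k+1},\mathbf{c}^{k+1})-\nabla_{c}\Psi(\mathbf{a}^{k},\mathbf{b}^{k+1},\mathbf{c}^{k+1}),
\]
whose arguments differ only in $\mathbf{a}$, so $\|\nabla_{c}\Psi(\boldsymbol{\omega}^{k+1})\|\le L\,\|\mathbf{a}^{k+1}-\mathbf{a}^{k}\|\le L\,\|\boldsymbol{\omega}^{k+1}-\boldsymbol{\omega}^{k}\|$. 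Because the $\mathbf{a}$-block vanishes, summing squared block norms yields $\|\eta^{k+1}\|=\|\nabla\Psi(\boldsymbol{\omega}^{k+1})\|\le\sqrt{2}\,L\,\|\boldsymbol{\omega}^{k}-\boldsymbol{\omega}^{k+1}\|$, i.e.\ the claim with $\nu=\sqrt{2}\,L$. The one genuinely load-bearing step is obtaining the \emph{uniform} constant $L$: everything else is bookkeeping with the optimality conditions and the triangle inequality, so the hard part is really just making sure boundedness of the iterates (and of the mixed points) is invoked before differentiating. Together with the sufficient-decrease property of Theorem~\ref{thm:decrease} and the smoothness of $\Psi$, this supplies the last hypothesis needed to conclude convergence of $\{\boldsymbol{\omega}^{k}\}$ to a critical point of $\Psi$ by the Kurdyka--\L ojasiewicz argument.
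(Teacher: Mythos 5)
Your proposal is correct and follows essentially the same route as the paper: use the exact first-order optimality conditions of the three block subproblems (in the order $\mathbf{b}$, $\mathbf{c}$, $\mathbf{a}$), note that the last-updated block of the gradient vanishes at $\boldsymbol{\omega}^{k+1}$, and bound the remaining blocks by Lipschitz continuity of the gradient on a bounded set containing the iterates (obtained, as in the paper, from boundedness of the sequence plus twice continuous differentiability), yielding $\nu$ as a fixed multiple of the Lipschitz constant. Your write-up is in fact somewhat cleaner than the paper's (which works with $\nabla f$ plus the explicit regularizer gradients and has some index slips), but the underlying argument is the same.
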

\begin{proof}
Let $k$ be a positive integer. By the first order optimality condition of the subproblems, we have
\begin{equation*}
\begin{split}
&\nabla_{b}f\left(\mathbf{a}^{k},\mathbf{b}^{k+1},\mathbf{c}^{k}\right)+\mathbf{L}_{b}^{\top}\mathbf{L}_{b}\mathbf{b}^{k+1}=0,\\  &\nabla_{c}f\left(\mathbf{a}^{k},\mathbf{b}^{k+1},\mathbf{c}^{k+1}\right)+\mathbf{L}_{c}^{\top}\mathbf{L}_{c}\mathbf{c}^{k+1}=0,\\
&\nabla_{a}f\left(\mathbf{a}^{k+1},\mathbf{b}^{k+1},\mathbf{c}^{k+1}\right)+\mathbf{L}_{a}^{\top}\mathbf{L}_{a}\mathbf{a}^{k+1}=0.
\end{split}
\end{equation*}
Let us define $\eta_{2}^{k+1}:=\nabla_{b}f\left(\boldsymbol{\omega}^{k+1}\right)-\nabla_{b}f\left(\mathbf{a}^{k},\mathbf{b}^{k+1},\mathbf{c}^{k}\right)$, then
\begin{equation*}
    \eta_{2}^{k+1}=\nabla_{b}f\left(\boldsymbol{\omega}^{k+1}\right)+\mathbf{L}_{b}^{\top}\mathbf{L}_{b}\mathbf{b}^{k+1}=\nabla_{b}\Psi\left(\boldsymbol{\omega}^{k+1}\right).
\end{equation*}
Similarly, define $\eta_{3}^{k+1}:=\nabla_{c}f\left(\boldsymbol{\omega}^{k+1}\right)-\nabla_{c}f\left(\mathbf{a}^{k},\mathbf{b}^{k+1},\mathbf{c}^{k+1}\right)$ implies $\eta_{1}^{k+1}=\nabla_{c}\Psi\left(\boldsymbol{\omega}^{k+1}\right)$, and define $\eta_{3}^{k+1}:=\nabla_{a}f\left(\boldsymbol{\omega}^{k+1}\right)-\nabla_{a}f\left(\mathbf{a}^{k+1},\mathbf{b}^{k+1},\mathbf{c}^{k+1}\right)$ implies $\eta_{2}^{k+1}=\nabla_{b}\Psi\left(\boldsymbol{\omega}^{k+1}\right)$. Hence the vector $\eta^{k+1}=\left(\eta_{1}^{k+1},\eta_{2}^{k+1},\eta_{3}^{k+1}\right)\in\partial\Psi\left(\boldsymbol{\omega}^{k+1}\right)$. 

Moreover, since the sequence $\left\{ \boldsymbol{\omega}^{k}\right\} _{k\in\mathbb{N}}$ is bounded, and the objective function $f$ is twice continuously differentiable, then by the mean value theorem, $\nabla f$ is Lipschitz continuous. Hence, there exists a constant $C_{2}>0$ such that
\begin{equation*}
    \begin{split}
        \left\Vert \eta_{2}^{k+1}\right\Vert &=\left\Vert \nabla_{b}f\left(\boldsymbol{\omega}^{k+1}\right)-\nabla_{b}f\left(\mathbf{b}^{k},\mathbf{b}^{k+1},\mathbf{c}^{k}\right)\right\Vert \\
        &=\left\Vert \nabla_{b}f\left(\boldsymbol{\omega}^{k+1}\right)-\nabla_{b}f\left(\mathbf{a}^{k},\mathbf{b}^{k+1},\mathbf{c}^{k}\right)+\nabla_{b}f\left(\mathbf{a}^{k},\mathbf{b}^{k+1},\mathbf{c}^{k}\right)-\nabla_{b}f\left(\mathbf{a}^{k},\mathbf{b}^{k},\mathbf{c}^{k}\right)\right\Vert \\
        &\leq\left\Vert \nabla_{b}f\left(\boldsymbol{\omega}^{k+1}\right)-\nabla_{b}f\left(\mathbf{a}^{k},\mathbf{b}^{k+1},\mathbf{c}^{k}\right)\right\Vert +\left\Vert \nabla_{b}f\left(\mathbf{a}^{k},\mathbf{b}^{k+1},\mathbf{c}^{k}\right)-\nabla_{b}f\left(\mathbf{a}^{k},\mathbf{b}^{k},\mathbf{c}^{k}\right)\right\Vert \\
        &\leq C_{1}\left\Vert \boldsymbol{\omega}^{k+1}-(\mathbf{a}^{k},\mathbf{b}^{k+1},\mathbf{c}^{k})\right\Vert +C_{2}\left\Vert \boldsymbol{\omega}^{k}-\boldsymbol{\omega}^{k+1}\right\Vert \\
        &\leq P_{2}\left\Vert \boldsymbol{\omega}^{k}-\boldsymbol{\omega}^{k+1}\right\Vert 
    \end{split}
\end{equation*}
for some constant $P_{2}\geq C_{1}+C_{2}$

Similarly, there exists cosntants $P_{1},P_{3}>0$ such that $\left\Vert \eta_{1}^{k+1}\right\Vert \leq P_{1}\left\Vert \boldsymbol{\omega}^{k}-\boldsymbol{\omega}^{k+1}\right\Vert$  and $\left\Vert \eta_{3}^{k+1}\right\Vert \leq P_{3}\left\Vert \boldsymbol{\omega}^{k}-\boldsymbol{\omega}^{k+1}\right\Vert$. 

Setting $\nu=\max\left\{ P_{1},P_{2},P_{3}\right\}$  gives $\left\Vert \eta^{k+1}\right\Vert \leq\nu\left\Vert \boldsymbol{\omega}^{k}-\boldsymbol{\omega}^{k+1}\right\Vert$.
\end{proof}

\begin{theorem}
Let $\left\{ \boldsymbol{\omega}^{k}\right\} _{k\in\mathbb{N}}$ be the sequence generated by our regularized ALS algorithm, then $\left\{ \boldsymbol{\omega}^{k}\right\} _{k\in\mathbb{N}}$ converges to the critical point of $\Psi$.
\end{theorem}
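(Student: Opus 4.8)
The plan is to invoke the Kurdyka--\L ojasiewicz (KL) descent scheme for block‑coordinate algorithms, in the form adapted to regularized alternating least squares as in \cite{karim2019tensor} (the abstract template is the Attouch--Bolte--Svaiter convergence theorem for regularized Gauss--Seidel methods). The two structural hypotheses that template requires have already been proved above, so most of the work is in verifying the KL property itself. First I would observe that $\Psi$ in Eq.~\ref{eq:tikhonov-obj} is a polynomial in the entries of $\boldsymbol{\omega} = (\V{a},\V{b},\V{c})$: the data‑fidelity term $f$ is a sum of squares of degree‑three polynomials and the three Tikhonov terms are quadratics. Hence $\Psi$ is real‑analytic, in particular semialgebraic, and therefore satisfies the KL inequality at every point of $\mathbb{R}^{\ell(mp+mn+np)}$, with a desingularizing function of the form $\varphi(s) = c\,s^{1-\theta}$ for some exponent $\theta \in [0,1)$.

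Next I would collect the three ingredients of the scheme. The sufficient‑decrease inequality $\Psi(\boldsymbol{\omega}^{k}) - \Psi(\boldsymbol{\omega}^{k+1}) \ge \rho\,\|\boldsymbol{\omega}^{k} - \boldsymbol{\omega}^{k+1}\|_2^2$ is Theorem~\ref{thm:decrease}; since $\Psi \ge 0$, it also gives that $\{\Psi(\boldsymbol{\omega}^k)\}$ is nonincreasing and converges to some $\Psi^\star \ge 0$, and that $\|\boldsymbol{\omega}^{k}-\boldsymbol{\omega}^{k+1}\| \to 0$. The relative‑error bound $\operatorname{dist}\!\big(0,\partial\Psi(\boldsymbol{\omega}^{k+1})\big) \le \|\eta^{k+1}\| \le \nu\,\|\boldsymbol{\omega}^{k}-\boldsymbol{\omega}^{k+1}\|$ is Theorem~\ref{thm:re_cond}. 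Finally, continuity of the polynomial $\Psi$ supplies the closedness condition: along any convergent subsequence $\boldsymbol{\omega}^{k_j} \to \bar{\boldsymbol{\omega}}$ one has $\Psi(\boldsymbol{\omega}^{k_j}) \to \Psi(\bar{\boldsymbol{\omega}})$. Combined with the Remark following Theorem~\ref{thm:decrease} that $\{\boldsymbol{\omega}^k\}$ is bounded, these imply that the accumulation set $\Omega$ is nonempty, compact and connected, is contained in $\{\boldsymbol{\omega} : 0 \in \partial\Psi(\boldsymbol{\omega})\}$ by closedness of the graph of $\partial\Psi$, and that $\Psi \equiv \Psi^\star$ on $\Omega$.

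I would then run the finite‑length argument. Applying the uniformized KL property on the compact set $\Omega$, for all large $k$ we have $\varphi'\!\big(\Psi(\boldsymbol{\omega}^{k}) - \Psi^\star\big)\,\operatorname{dist}\!\big(0,\partial\Psi(\boldsymbol{\omega}^{k})\big) \ge 1$. Writing $\delta_k := \varphi\!\big(\Psi(\boldsymbol{\omega}^{k}) - \Psi^\star\big) - \varphi\!\big(\Psi(\boldsymbol{\omega}^{k+1}) - \Psi^\star\big) \ge 0$, concavity of $\varphi$ and the sufficient decrease give $\delta_k \ge \varphi'\!\big(\Psi(\boldsymbol{\omega}^{k}) - \Psi^\star\big)\,\rho\,\|\boldsymbol{\omega}^{k}-\boldsymbol{\omega}^{k+1}\|_2^2$, while the relative‑error bound at step $k$ gives $\varphi'\!\big(\Psi(\boldsymbol{\omega}^{k}) - \Psi^\star\big) \ge 1/\big(\nu\,\|\boldsymbol{\omega}^{k-1}-\boldsymbol{\omega}^{k}\|\big)$. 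Combining these and using $2\sqrt{uv}\le u+v$ produces $\|\boldsymbol{\omega}^{k}-\boldsymbol{\omega}^{k+1}\| \le \tfrac12\,\|\boldsymbol{\omega}^{k-1}-\boldsymbol{\omega}^{k}\| + \tfrac{\nu}{2\rho}\,\delta_k$; summing over $k$ and telescoping the nonnegative $\delta_k$ shows $\sum_k \|\boldsymbol{\omega}^{k}-\boldsymbol{\omega}^{k+1}\| < \infty$. Hence $\{\boldsymbol{\omega}^k\}$ is Cauchy, so it converges, and its limit $\boldsymbol{\omega}^\star$ is the unique accumulation point, which by the previous paragraph satisfies $0 \in \partial\Psi(\boldsymbol{\omega}^\star)$.

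The main obstacle is exactly this last step: upgrading $\sum_k \|\boldsymbol{\omega}^{k}-\boldsymbol{\omega}^{k+1}\|_2^2 < \infty$ (which alone yields only vanishing successive differences, not finite trajectory length) to $\sum_k \|\boldsymbol{\omega}^{k}-\boldsymbol{\omega}^{k+1}\| < \infty$, which is where the KL inequality is indispensable. Two technical points need care inside that step: one must check the iterates eventually remain in a neighborhood of $\Omega$ on which the uniformized KL inequality is valid (using $\Psi(\boldsymbol{\omega}^k)\downarrow\Psi^\star$, $\operatorname{dist}(\boldsymbol{\omega}^k,\Omega)\to 0$, and the contraction‑type estimate above to trap the trajectory), and one must dispose of the degenerate case $\Psi(\boldsymbol{\omega}^{k_0}) = \Psi^\star$ for some finite $k_0$, in which the sufficient‑decrease inequality forces $\boldsymbol{\omega}^{k}=\boldsymbol{\omega}^{k_0}$ for all $k \ge k_0$ and the claim is immediate. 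Everything else is a routine transcription of the abstract KL convergence theorem once the hypotheses have been recorded.
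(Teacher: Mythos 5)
Your proposal is correct and follows essentially the same route as the paper: verify that $\Psi$ satisfies the Kurdyka--\L ojasiewicz property (the paper via semi-algebraicity of compositions, you via the equivalent observation that $\Psi$ is a polynomial), invoke the already-proved sufficient-decrease and relative-error results together with boundedness of the iterates, and conclude by the Attouch--Bolte--Svaiter convergence theorem. The only difference is that you unpack the finite-length argument inside that abstract theorem, whereas the paper simply cites it (Theorem 2.9 of \cite{attouch2013convergence}).
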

\begin{proof} 
By Lemma 2.3 in \cite{attouch2013convergence}, the squared distance function $g(x,y)=\Vert x- y \Vert^2$ for $x,y\in \mathbb{R}^{m}$ is semi-algebraic. Moreover, the univariate function $f(x)=|x|$ is semi-algebraic \cite{karim2019tensor}. Since the addition and the composition of semi-algebraic functions are semi-algebraic, the objective function given in Eq.~\ref{eq:tikhonov-obj} is semi-algebraic. Hence the objective function satisfies the Kurdyka-Łojasiewicz property. Then by Theorem.~\ref{thm:re_cond} and \ref{thm:decrease}, results follow from Theorem 2.9 in \cite{attouch2013convergence}.
\end{proof}

\section{Nonlinear Solution for Computing BMD}
Given $\T{T}\in \mathbb{R}^{m\times p\times n}$, solving BMD with non-linear optimization
\begin{equation*}
    \underset{\TA,\TB,\TC}{\arg\min} \|\T{T} - \bmp(\TA,\TB,\TC)\|_{F}^{2}
\end{equation*}
for the factor tensor triplet $(\TA,\TB,\TC)\in  \mathbb{R}^{m\times \ell\times n}\times  \mathbb{R}^{m\times p\times \ell} \times  \mathbb{R}^{\ell \times p\times n}$. 

Let us take the vectorized factor tensors 
\begin{equation*}
    \V{a} =\vvec(\TA);\quad 
    \V{b} =\vvec(\TB);\quad 
    \V{c} =\vvec(\TC),
\end{equation*}
and let $\V{v} = [\V{a}, \V{b}, \V{c}] \in \mathbb{R}^{(mn + mp +  pn)\ell \times 1}$. Define the residual vector $\V{r}(\V{v})$ to be
\[
    \V{r}(\V{v}) =  \vvec\left(\T{T} \right) - \vvec\left(\bmp\left(\TA,\TB,\TC\right)\right). 
\]
Every entry of $\V{r}$ corresponds to one $(i,j,k)$ triple, so we adopt this indexing scheme, 
\[ \V{r}_s = r_{ijk} = t_{ijk} - \sum_{t=1}^\ell a_{itk} b_{ijt} c_{tjk}, \]
where $s = i+(j-1)m + (k-1)mp$.
Then the Jacobian matrix $\M{J}$ of the residual $\V{r}(\Vv)$ is of size $mpn \times (m\ell n + mp\ell+\ell pn)$ and is given by
\begin{equation*}
 \M{J}	=\left(\begin{array}{ccccccc}
 & \vdots &  & \vdots &  & \vdots\\
\cdots & \frac{\partial {r}_{ijk}}{\partial {a}_{itk}} & \cdots & \frac{\partial {r}_{ijk}}{\partial {b}_{ijt}} & \cdots & \frac{\partial {r}_{ijk}}{\partial {c}_{tjk}} & \cdots\\
 & \vdots &  & \vdots &  & \vdots
\end{array}\right),
\end{equation*}
where, for fixed $(i,j,k)$, the only non-zero entries are 
$\frac{\partial r_{ijk}}{\partial a_{itk}}=-b_{ijt}c_{tjk}, \frac{\partial r_{ijk}}{\partial b_{ijt}}=-a_{itk}c_{tjk}$ and $\frac{\partial r_{ijk}}{\partial c_{tjk}}=-a_{itk}b_{ijt}$, for 
$1 \leq t \leq \ell$.

If $\M{J}(\V{v}^{k})$ has full column rank, then the Gauss-Newton method updates the solution by
\begin{equation*}
    \V{v}^{k+1} = \V{v}^{k} - \left(\M{J}(\V{v}^{k})^{\top}\M{J}(\V{v}^{k}) \right)^{-1} \M{J}(\V{v}^{k})^{\top} \V{r}(\V{v}^{k})
\end{equation*}
However, the Jacobian matrix for the BM-decomposition is singular in general. 
As a result, the Gauss-Newton method does not converge to a local minimum. On the other hand, the Levenberg-Marquardt method adds an additional penalty term in the least-squares problem by solving the following problem
\begin{equation*}
    \V{v}^{k+1} =\underset{\V{v}}{\arg\min} \left\Vert \V{r}(\V{v}) +\M{J}(\Vv^k)(\Vv - \Vv^k)\right\Vert_F^2+\lambda_k \left\Vert \V{v}-\V{v}^{k}\right\Vert_F^2. 
\end{equation*}
where $\V{r}(\V{v}) +\M{J}(\Vv^k)(\Vv - \Vv^k)$ is the first-order linear approximation to $\V{r}(\V{v})$. Then the solution is given by
\begin{equation*}
    \V{v}^{k+1} = \V{v}^{k} - \left(\M{J}(\V{v}^{k})^{\top}\M{J}(\V{v}^{k}) + \lambda_k \M{I} \right)^{-1} \M{J}(\V{v}^{k})^{\top} \V{r}(\V{v}^{k}).
\end{equation*}

\section{From DMD to Tensor BMP}
Given a video data of $p$ frames and each frame is of size $m\times n$, let $\Vx_k\in \mathbb{R}^{mn\times 1}$ be the vectorized $k$-th video frame for $1\leq k \leq p$, then the DMD method groups the data into matrices $\M{X}_1, \M{X}_2 \in \mathbb{R}^{mn\times (p-1)}$ as follows
\begin{equation*}
    \M{X}_1 = \left[\Vx_1 \ \Vx_2 \ \Vx_3 \cdots \Vx_{p-1} \right];\quad \M{X}_2 = \left[\Vx_2 \ \Vx_3 \ \Vx_4 \cdots \Vx_{p} \right].
\end{equation*}
The method then finds a linear map $\M{A} \in \mathbb{R}^{mn\times mn}$ such that $\Vx_{k+1} = \M{A}\Vx_{k}$. Thus $\M{X}_2\approx \M{A}\M{X}_1$.

The SVD of the matrix $\M{X}_1$ with truncation to $\ell$ terms can be used for dimensionality reduction, i.e. $\M{X}_1 = \M{U} \M{\boldsymbol{\Sigma}} \M{V}^{\top}$, where $ \M{U}\in \mathbb{R}^{mn\times \ell}$ is unitary, $\M{\Sigma}\in \mathbb{R}^{\ell \times \ell}$ is diagonal and $\M{V} \in \mathbb{R}^{(p-1) \times \ell}$ is unitary. After projecting $\M{A}$ onto the left-singular vector matrix $\M{U}$ as $\Tilde{\M{A}} = \M{U}^{\top}\M{A}\M{U} = \M{U}^{\top}\M{X}_2\M{V}\M{\Sigma}^{-1}$, $\Tilde{\M{A}} \in \mathbb{R}^{\ell \times \ell}$, we compute the eigen-decomposition of $\Tilde{\M{A}}$ such that $\Tilde{\M{A}}\M{W} = \M{W}\M{\Lambda}$, 
with $\M{\Lambda} =\text{diag}\left(\lambda_t \right) \in \mathbb{C}^{\ell \times \ell}$ where $\lambda_t, 1\leq t \leq \ell$, are DMD eigenvalues which can be converted to Fourier frequencies via $\omega_t = \frac{\ln{(\lambda_t)}}{\Delta t}$. Assuming video frames with equally spaced time, $\Delta t=1$, $e^{\omega_t} = \lambda_t$.  The DMD modes $\M{\Phi} \in \mathbb{C}^{mn\times \ell}$ are obtained by computing
\begin{equation}
    \M{\Phi} = \M{X}_2\M{V}\M{\Sigma}^{-1}\M{W}.
    \label{eq:dmd_mode_matrix}
\end{equation}
Taking the DMD modes and the DMD frequencies, the original vectorized video frame at time $k=1,2,\dots,p$ can be reconstructed by
\begin{equation}
    \V{x}_k \approx \sum_{1\leq t\leq \ell} b_t \V{\varphi}_t e^{\omega_t(k-1)} = \sum_{1\leq t\leq \ell} b_t \V{\varphi}_t \lambda_{t}^{k-1},
    \label{eq:DMD_approx}
\end{equation}
with $ \V{\varphi}_t = \M{\Phi}_{:,t}$ the $t$-th DMD mode, and the vector $\V{b}=\left[b_1 \, b_2 \, \cdots \, b_{\ell} \right]^{\top}$ contains the initial amplitudes
for the modes and is obtained by solving $\M{\Phi}\V{b} = \V{x}_1$ where $\V{x}_1$ the vectorized first frame.

Assume the set of DMD frequencies $\omega_{\alpha}$, $1\leq \alpha \leq \ell$, satisfies $\|\omega_{\alpha}\|\approx 0$, i.e. $\|\lambda_{\alpha}\| = \|e^{\omega_{\alpha}}\|\approx 1$ for $1\leq \alpha \leq \ell$. Typically, $\alpha$ is a single index rather than a vector. Then the background and the foreground video sequences of $\M{X}$ are reconstructed with the DMD technique for the time vector $\boldsymbol{\theta}=[0, 1, \dots, p-1]$ respectively by
\begin{equation}
    \M{X}^{\text{bg}} \approx  \sum_{1\leq t=\alpha \leq \ell} b_{t} \V{\varphi}_{t} \lambda_{t}^{\boldsymbol{\theta}}; \quad \M{X}^{\text{fg}} \approx \sum_{1\leq t\neq \alpha \leq \ell} b_t \V{\varphi}_t \lambda_{t}^{\boldsymbol{\theta}}.
    \label{eq:dmd_bg_fg}
\end{equation}
We note that, as discussed in \cite{grosek2014dynamic}, since it should be true that the video sequence $\M{X}=\M{X}^{bg}+\M{X}^{fg}$, so a real-valued foreground approximation can be alternatively obtained by
\begin{equation}
    \M{X}^{\text{fg}} \approx \M{X} - |\M{X}^{\text{bg}}|,
    \label{eq:dmd_fg_v2}
\end{equation}
where $|\cdot|$ yields the modulus of each element within the matrix. However, we note that the expression given by the second approximation in Eq.~(\ref{eq:dmd_fg_v2}) is not a compressible representation of the foreground video sequence. For turning the DMD reconstruction into the BMD-ALS initial guesses, we take the background/foreground approximation given in Eq.~\ref{eq:dmd_bg_fg}, which we will show in the next paragraph.

Next, we show that the video reconstruction by the DMD method given in Eq.~(\ref{eq:DMD_approx}) can be equivalently written in a tensor BM-product form. Let us first define the DMD mode matrix $\M{M}_t \in \mathbb{C}^{m \times n}$ to be
\begin{equation}
    \M{M}_t = \reshape(b_t\V{\varphi}_t, [m,n])
\end{equation}
for all $t=1,\dots,\ell$. That is, the DMD mode vector $\V{\varphi}_t$ is scaled by the corresponding initial amplitude $b_t$ and converted into an $m\times n$ matrix. 

Next, we define a third-order tensor triplet
\begin{enumerate}
    \item $\TA$: an $m\times \ell \times n$ DMD mode tensor with lateral slices $\TA(:,t,:) = \M{M}_t$,
    \item $\TB$: an $m\times p \times \ell$ tensor of ones,
    \item $\TC$: an $\ell \times p \times n$ DMD eigenvalue tensor with row vectors, $\TC_{t,:,k} = \lambda_t^{\V{\theta}}$, $1\leq k \leq p$.
\end{enumerate}
Ordering the video frames as lateral slices to form a video tensor $\TX \in \mathbb{R}^{m\times p \times n}$, the DMD reconstruction of Eq.~(\ref{eq:DMD_approx}) can be written in the tensor BMP form as
\begin{equation*}
    \TX \approx \bmp \left(\T{A}, \T{B}, \T{C} \right).
\end{equation*}
We can update $\T{B}$ as discussed in Phase II in Sec.~6.3 from the main text, and then subsequently optimizing each factor tensor using ALS to improve the low BM-rank approximation. The BMD--ALS reconstructed background and foreground tensors are given by 
\begin{equation*}
\begin{split}
    \TX^{\text{bg}}_{\text{BMD--ALS}} &\approx \sum_{1\leq t=\alpha \leq \ell} \bmp\left(\TA_{:,t,:},\TB_{:,:,t},\TC_{t,:,:} \right);\\
    \TX^{\text{fg}}_{\text{BMD--ALS}} &\approx \sum_{1\leq t\neq \alpha \leq \ell} \bmp\left(\TA_{:,t,:},\TB_{:,:,t},\TC_{t,:,:} \right).
\end{split}
\end{equation*}

\subsection{DMD factor interpretation}
We note that the first step of the DMD method which applies a truncated SVD to $\M{X_1}$ is similar to taking the SVDs of spatiotemporal slices of the video tensor (with one frame less). Instead of applying SVD to individual spatiotemporal slices, DMD applies SVD to all slices at the same time. As a result, the left-singular matrix $\M{U}$ similarly captures the spatial information of the video. Particularly, as discussed in Sec.~4.2, the first left-singular vector captures the dominant background scene. However, the DMD method that models the static background image in-fact models a weighted linear combination of all the left-singular vectors (spatial information).

From the eigendecomposition of $\tilde{\M{A}}$, we have 
$\tilde{\M{A}}\M{W}=\M{U}^{\top}\M{X}_{2}\M{V}\M{\Sigma}^{-1}\M{W}=\M{W}\boldsymbol{\Lambda}$, which then gives
$\boldsymbol{\Phi} = \M{X}_{2}\M{V}\M{\Sigma}^{-1}\M{W}=\M{U}\M{W}\boldsymbol{\Lambda}$. Therefore, the DMD modes given in Eq. (\ref{eq:dmd_mode_matrix}) can be re-written as
\begin{equation}
    \boldsymbol{\Phi}_{i,j} =\sum_{1\leq s\leq\ell}\lambda_{j}\M{U}_{i,s}\M{W}_{s,j}
    \implies\boldsymbol{\Phi}_{:,j} =\sum_{1\leq s\leq\ell}\lambda_{j}\M{U}_{:,s}\M{W}_{s,j}.
\end{equation}
Thus, the background DMD mode represented by $\boldsymbol{\Phi}_{:,\alpha}$ when $\|\lambda_{\alpha}\|\approx 1$ for all $\alpha, 1\leq \alpha \leq \ell $, is given by
\begin{equation}
\boldsymbol{\Phi}_{:,\alpha}=\sum_{1\leq s\leq\ell}\lambda_{\alpha}\M{U}_{:,s}\M{W}_{s,\alpha}.
\label{eq:dmd_bg_sum}
\end{equation}
From Eq.~(\ref{eq:dmd_bg_sum}), we see that the DMD reconstruction of the stationary mode is a weighted linear combination of the spatial information captured by the left-singular vectors of the spatiotemporal video matrix. This could potentially explain the observations made in the numerical experiments of the videos in \cite{grosek2014dynamic}: the DMD reconstructed background scene tends to include spurious foreground pixels. Since in the DMD method, the SVD step is applied to vectorized video frames across all $p$ frames, taking the rank-truncation with $\ell = p-1$ almost recovers the original video matrix. The spatial information of the foreground objects moving through time is captured by the set of left-singular vectors $\M{U}_{:,t}$ with $2\leq t \leq \ell$, which are also contained in the background DMD mode $\boldsymbol{\Phi}_{:,\alpha}$. This could also explain another phenomenon about the foreground motions in \cite{grosek2014dynamic}: the moving objects in the foreground create movement trails extrapolating both past and future motions of the objects. Since the uncompressed foreground video by the DMD algorithm is obtained by subtracting the background from the original video, the erroneous foreground motion trails contained in the background DMD reconstruction persisted in the foreground video. 

Importantly, the DMD method does not provide an approximation to the generative video model from Sec.~5.1 in the main text. In addition to the background reconstruction with spurious object motion discussed above, the DMD compressed foreground video frames exhibit another issue. Since the foreground modes $\V{\varphi}_t$, $1\leq t \neq \alpha \leq \ell$, change over time linearly scaled by $\lambda_t^{j-1}$ at time $j$, the non-linear foreground object motions cannot be exactly modeled by the linear change.

We also observe the following:
\begin{itemize}
    \item The DMD method requires processing video segments rather than computing the decomposition of the whole video at once. One reason for taking smaller segments is that it helps with keeping shorter processing time than data acquisition time. Also, longer videos produce longer erroneous movement trails on the approximated background.
    \item DMD modes and frequencies are complex-valued. In \cite{grosek2014dynamic}, the real-valued video pixels are obtained by taking the modulus of the complex values. The code from \cite{kutz2016dynamic} uses the real part of the complex-valued video frame entries for display. In our BMD--ALS video reconstruction with DMD initialization, we will also use real-part solutions.
    \item The DMD method has the potential of a compressed reconstruction of the background scene by setting $\ell=1$ in the SVD step \cite{grosek2014dynamic}. In our work, by allowing $\ell$ to be small, i.e. $1< \ell \ll p$, we are interested in obtaining the compressed representations of the background scene and the foreground object videos.
\end{itemize}

\section{Comparison to other tensor methods}
\subsection{CP form} \label{subsec:bmp_to_cp}
Just as other non-CP tensor approximations can be expressed as a sum of rank-one outer products of tensor, so too, can the BMD.  

We consider $\ell=1$, since the case for $\ell > 1$ is a direct extension.  For simplicity, we assume entries are real.
Define three matrix slices $\MA := \squeeze(\TA_{:,1,:})$ is $m \times n$, $\MB := \TB_{:,:,1}$ is $m \times p$ and 
$\MC := \squeeze(\TC_{1,:,:})$ is $p \times n$.  
We compute the SVDs of each of these matrices.  Superscripts $A, B$, and $C$ will denote to which matrix the SVD components are associated.

The $j$-th column of $\MA$, $\MA_{:,j}$ can be written as $\displaystyle{\sum_{k=1}^{r_A} \sigma_k^A {v}_{kj}^A \V{u}_{k}^A}$, the $j$-th column of $\M{C}$ would be $\displaystyle{\sum_{s=1}^{r_C} \sigma^{C}_s v_{sj}^C \V{u}_s^C}$ and  the matrix $\MB$ is $\displaystyle{\sum_{i=1}^{r_B} {\sigma}^{B}_i \V{u}_i^B (\V{v}_i^{\top})^B}$, where the ranks of $\MA, \MB, \M{C}$ are $r_A,r_B,r_C,$ respectively. 

Now the $j$-th frontal face of $\bmp(\TA,\TB,\TC)$ will be
\begin{equation*}
    \bmp(\TA,\TB,\TC)_{:,:,j} = \diag(\MA_{:,j}) \, \MB \, \diag(\MC_{:,j}).
\end{equation*}
Because a matrix-vector product between a diagonal matrix and a vector can be expressed using Hadamard product, denoted $\odot$, we obtain an expression for the $j$-th frontal slice:
\begin{equation*}
    \bmp(\TA,\TB,\TC)_{:,:,j} = \sum_{k,s,i} \sigma_k^A \sigma_i^B \sigma_s^C (\V{u}_k^A \odot \V{u}_i^B) (\V{v}_i^B \odot \V{u}_s^C)^{\top}.
\end{equation*}
It follows that the outer product of this matrix triple is a third-order tensor that can be expressed as
\begin{equation*} 
\bmp(\TA,\TB,\TC) = \sum_{k,s,i} (\sigma_k^A \sigma_i^B \sigma_s^C)
(\V{u}_k^A \odot \V{u}_i^B) \circ (\V{v}_i^B \odot \V{u}_s^C) \circ (\V{v}_k^A \odot \V{v}_s^C) .\end{equation*}
Thus the tensor rank is bounded by $r_A r_B r_C$ and the vectors have two-norms bounded above by 1. For each $\ell > 1$, we would have a similar expression.  This is only useful in bounding the tensor (CP) rank if the matrix ranks are very small. 

On the other hand, if we have a CP decomposition of $\T{X}\in\mathbb{C}^{m\times p\times n}$,
we can get a bound on the BM-rank by looking at the ranks of the factor matrices. 
\begin{theorem} 
Let $\TX$ have the CP decomposition
$\displaystyle{\T{X}=\sum_{1\leq t\leq r}\M{A}_{:,t}\circ \M{B}_{:,t} \circ \M{C}_{:,t}}$ \footnote{The symbol ``$\circ$'' represents the vector outer product \cite{kolda2009tensor}.}
where $\M{A}\in\mathbb{C}^{m\times r}$, $\M{B}\in\mathbb{C}^{p\times r}$, and $\M{C}\in\mathbb{C}^{n\times r}$ are the factor matrices and $r$ is the (real) CP tensor rank. 

Suppose the factor matrices have ranks $\rho_A,\rho_B$, and $\rho_C$, with $\rho:=\min\{\rho_A,\rho_B,\rho_C\}$. Then the BM-rank is bounded above by $\rho$.
\end{theorem}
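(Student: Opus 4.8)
The plan is to deduce the bound directly from Theorem~\ref{thm:slice2bmd} and its corollary, which states that the BM-rank of $\TX$ is at most the smallest of the three maximal slicewise ranks taken over, respectively, the frontal slices $\TX_{:,:,k}$, the horizontal slices $\squeeze(\TX_{i,:,:})$, and the lateral slices $\squeeze(\TX_{:,j,:})$. The key observation is that, in each of the three slicing orientations, the CP decomposition exhibits every slice as a product of two of the three factor matrices with a diagonal matrix sandwiched between them, so that the maximal rank in a given orientation is controlled by two of the three numbers $\rho_A,\rho_B,\rho_C$; taking the minimum over the three orientations then yields exactly $\rho=\min\{\rho_A,\rho_B,\rho_C\}$.

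Concretely, I would first read off the three families of slices from the entrywise CP formula $\TX_{i,j,k}=\sum_{t=1}^{r}\MA_{i,t}\MB_{j,t}\MC_{k,t}$. This gives $\TX_{:,:,k}=\MA\,\diag(\MC_{k,:})\,\MB^{\top}$, $\squeeze(\TX_{i,:,:})=\MB\,\diag(\MA_{i,:})\,\MC^{\top}$, and $\squeeze(\TX_{:,j,:})=\MA\,\diag(\MB_{j,:})\,\MC^{\top}$. Using only the elementary fact $\operatorname{rank}\bigl(\M{F}\,\diag(v)\,\M{G}^{\top}\bigr)\le\min\{\operatorname{rank}\M{F},\operatorname{rank}\M{G}\}$, we obtain $\operatorname{rank}(\TX_{:,:,k})\le\min\{\rho_A,\rho_B\}$ for all $k$, $\operatorname{rank}(\squeeze(\TX_{i,:,:}))\le\min\{\rho_B,\rho_C\}$ for all $i$, and $\operatorname{rank}(\squeeze(\TX_{:,j,:}))\le\min\{\rho_A,\rho_C\}$ for all $j$. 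Nothing here depends on whether the scalars are real or complex, since both the CP model and the rank inequality are field-independent, so the $\mathbb{C}$-valued statement is covered verbatim.

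Applying the corollary to Theorem~\ref{thm:slice2bmd} then gives BM-rank $\le\min\bigl\{\min\{\rho_A,\rho_B\},\,\min\{\rho_B,\rho_C\},\,\min\{\rho_A,\rho_C\}\bigr\}$, and this triple minimum equals $\min\{\rho_A,\rho_B,\rho_C\}=\rho$, because whichever of $\rho_A,\rho_B,\rho_C$ is smallest occurs in two of the three pairwise minima. (Equivalently, if $\rho=\rho_C$ one may simply run the slicewise-SVD construction of Theorem~\ref{thm:slice2bmd} on $\TX^{\top}$, whose frontal slices have rank at most $\rho_C$, and invoke the invariance of BM-rank under tensor transpose.) I do not anticipate a real obstacle; the only thing to get right is the bookkeeping of which two factor matrices appear in which orientation, together with the point that a single orientation would only bound the BM-rank by $\min\{\rho_A,\rho_B\}$, so it is essential to use all three orientations, i.e.\ the corollary rather than Theorem~\ref{thm:slice2bmd} by itself.
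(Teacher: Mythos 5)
Your proof is correct and follows essentially the same route as the paper: both rest on the observation that each slice of the CP tensor factors as (factor matrix)\,$\diag(\cdot)$\,(factor matrix)$^{\top}$, so its rank is bounded by the relevant factor ranks, and then convert this slicewise bound into a BM-rank bound via the slicewise construction of Theorem~\ref{thm:slice2bmd}. The only cosmetic difference is that you invoke the corollary over all three orientations and take the triple minimum, whereas the paper first uses orientation independence to assume $\rho_A=\rho$ and then builds the BMD explicitly from a rank-revealing factorization $\MA=\M{U}_A\M{V}_A^{\top}$ of the minimal-rank factor.
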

\begin{proof}
If $\rho:= \min\{m,p,n\}$, this is trivial, since it is no different than the previous upper bound. Thus, let $\rho < \min\{m,p,n\}$. Due to the orientation independence of the CP decomposition, we assume without loss of generality that $\rho_A = \rho$.

The CP-decomposition of $\TX$ admits an expression of its $k$-th frontal slice \cite{kolda2009tensor} as$\T{X}_{:,:,k} = \M{A}\diag(\M{C}_{k,:})\M{B}^{\top},$
where $\diag(\M{C}_{k,:}) \in \mathbb{R}^{r \times r}$. Then we can take a rank-revealing factorization of $\M{A}$ such that $\M{A} = \M{U}_A \M{V}_A^{\top}$ with $\M{U}_A \in \mathbb{R}^{m \times \rho_A}$, $\M{V}_A \in \mathbb{R}^{r \times \rho_A}$,  $\rho_A =\rho$ columns.
Setting tensors $\TA \in \mathbb{R}^{m\times \rho \times n}$, $\T{K} \in \mathbb{R}^{m\times p \times \rho}$, and $\TC \in \mathbb{R}^{\rho\times p \times n}$ as
\begin{equation*}
\T{A}_{:,:,k} = \M{U}_A;  \qquad 
\T{C}_{:,:,k} = \M{V}_A^\top \diag(\M{C}_{k,:}) \M{B}^{\top}; 
\qquad        
\T{K} =\ones\left(m,p,\rho\right),
\end{equation*}
for $k=1,\ldots,n$, then
$\T{X}=\bmp\left(\T{A},\T{K},\T{C}\right)$. 
\end{proof}

\subsection{Comparison to tensor SVD under $\star_M$}
In \cite{kilmer2021tensor}, the authors describe a tensor SVD for third-order tensors under a specific type of product between pairs of tensors of appropriate dimension.  
What is needed to define the tensor-tensor $\star_{M}$ product is a unitary or orthogonal $n \times n$ matrix, $\M{M}$.  Under the resulting multiplication, truncating the tensor-SVD gives an optimal approximation in the Frobenius norm. 
Here, we take $\M{M} = \M{I}$, so the starting guess $\widehat{\TX}$ defined by the SS-SVD will be exactly the $\ell$ term approximation under the $\star_M$ product.

Importantly, they showed for any choice of orthogonal/unitary $\M{M}$, if $\TX_{\ell}$ is the truncated t-SVDM approximation under $\star_M$, that 
$\| \TX - \TX_\ell \|_F \le \| \M{X} - \M{X}_\ell \|_F$ 
where the $\M{X}$ here is the unstacked $\TX$ as a $mn \times p$ matrix, and $\M{X}_\ell$ denotes the $\ell$-term truncated matrix SVD approximation.  It was shown that strict inequality can be achieved.
Therefore, 
\begin{equation*}
    \| \TX - \bmp(\TA^{(k)},\TB^{(k)},\TC^{(k)}) \|_F < \| \TX - \TX_{\ell} \|_F \le \| \M{X} - \M{X}_{\ell} \|_F,
\end{equation*}
where the superscripts indicate the ALS iteration count of the $\ell$-term BMD approximation and $\widehat{\TX} = \TX_\ell$
is the $\ell$-term t-SVDM approximation with $\M{M}=\M{I}$.


\end{appendices}

\end{document}